\documentclass[12pt,a4paper]{article}
\usepackage{authblk}
\usepackage[french,english]{babel}
\usepackage[utf8]{inputenc}
\usepackage[T1]{fontenc}
\usepackage{amsmath}
\usepackage{amsfonts}
\usepackage{amssymb}
\usepackage{hyperref,enumitem}
\hypersetup{
    colorlinks=true,
    urlcolor=blue,
    linkcolor=blue,
    breaklinks=true
}
\usepackage{url}
\usepackage{xcolor,color}
\colorlet{darkgreen}{green!50!black}
\usepackage{amsbsy}
\usepackage{amsmath}
\usepackage{dsfont}

\usepackage{geometry}
\geometry{hmargin=2cm,vmargin=2cm}

\usepackage{graphicx}

\usepackage[numbers]{natbib}

\newcommand{\hatpi}{\widehat{\boldsymbol{\pi}}}
\newcommand{\hatnu}{{\widehat{\boldsymbol{\nu}}}}
\newcommand{\hatf}{\widehat{\boldsymbol{f}}}
\newcommand{\boldpi}{{\boldsymbol{\pi}}}
\newcommand{\boldnu}{{\boldsymbol{\nu}}}
\newcommand{\D}{{\boldsymbol{\mathrm{D}}}}

\renewcommand{\ge}{\geqslant}
\renewcommand{\geq}{\geqslant}
\renewcommand{\leq}{\leqslant}

\usepackage{amsthm}
\usepackage{oubraces}

\usepackage{bm}

\newtheorem{theorem}{Theorem}

\newtheorem{proposition}{Proposition}
\newtheorem{corollary}[proposition]{Corollary}
\newtheorem{remark}[proposition]{Remark}
\newtheorem{lemma}[proposition]{Lemma}

\newtheorem{statement}[proposition]{Statement}

\author[$1$]{Sandro Franceschi
}
\author[$2$]{Tomoyuki Ichiba}
\author[$3$]{Ioannis Karatzas}
\author[$4$]{Kilian Raschel}

\affil[$1$]{T\'el\'ecom SudParis, Institut Polytechnique de Paris}
\affil[$2$]{Department of Statistics and Applied Probability, University of California Santa Barbara}
\affil[$3$]{Department of Mathematics, Columbia University}
\affil[$4$]{University of Angers, CNRS}

\title{Invariant measure of gaps in degenerate competing three-particle systems}
\date{\today}

\begin{document}

\maketitle
\thispagestyle{empty}

\abstract{We study the gap processes in a degenerate system of three particles interacting through their ranks. We obtain the Laplace transform of the invariant measure of these gaps, and an explicit expression for the corresponding invariant density. To derive these results, we start from the basic adjoint relationship characterizing the invariant measure, and apply a combination of two approaches: first, the invariance methodology of W. Tutte, thanks to which we compute the Laplace transform in closed form; second, a recursive compensation approach which leads to the density of the invariant measure as an infinite convolution of exponential functions. As in the case of  Brownian motion with reflection or killing at the endpoints of an interval, certain Jacobi theta functions play a crucial role in our computations.
}

\section{Introduction and main results}

\subsection{Degenerate competing three-particle systems} \label{sec:1.1}
The paper \cite{ichiba_karatzas_degenerate_22} studies degenerate three-particle systems of Brownian particles, in which local characteristics are assigned by rank. Among these is the system
\begin{equation*}
X_i(\cdot)=x_i + \sum_{k=1}^3 \delta_k \int_0^\cdot \mathds{1}_{X_i(t)=R_k^X (t)} \mathrm{d}t+ \int_0^\cdot \mathds{1}_{X_i(t)=R_2^X (t)} \mathrm{d}B_t, \quad i=1,2,3,
\end{equation*}
with the notation $\max_{1\leqslant i \leqslant 3} X_i(t)=:R_1^X \geqslant R_2^X(t)\geqslant R_3^X(t):= \min_{1\leqslant i \leqslant 3} X_i(t)$
for the ranks (order statistics)\ in descending order; with ``lexicographic'' resolution of ties, i.e., always in favor of the lowest index $i$; with $x_i$ and $\delta_i$ given real numbers; and with $B_1(\cdot),B_2(\cdot),B_3(\cdot)$ independent scalar Brownian motions.

It is shown in \cite{ichiba_karatzas_degenerate_22} that this system admits a pathwise unique, strong solution, which is free of triple collisions as well as ``non-sticky'', in the sense
\begin{equation*}
   \int_0^\infty \mathds{1}_{R_k^X(t)=R_\ell^X(t)}\mathrm{d}t=0\quad
\text{for } k<\ell.
\end{equation*}
It is also shown that the two-dimensional process
\begin{equation}
\label{eq:gapdef}
\bigl(G(\cdot ),H(\cdot )\bigr)\overset{\Delta}{=} \bigl(R_1^X (\cdot )-R_2^X (\cdot ),R_2^X (\cdot )-R_3^X (\cdot )\bigr)
\end{equation}
is a degenerate Brownian motion in the nonnegative orthant $[0,\infty)^2$ with oblique reflection on its boundaries:
\begin{equation}
\label{eq:gapSRBM}
\begin{cases}
G(t)=x_1-x_2+(\delta_1-\delta_2)t-W(t)-\frac{1}{2}L^H(t)+L^G(t),
\\
H(t)=x_2-x_3+(\delta_2-\delta_3)t+W(t)-\frac{1}{2}L^G(t)+L^H(t),
\end{cases}
\end{equation}
for $0\leqslant t < \infty$. We denote here by $W(\cdot)$ a suitable standard, scalar Brownian motion, and by $L^Z(\cdot)=\int_0^\cdot \mathds{1}_{Z(t)=0}\mathrm{d}Z(t)$ the local time at the origin of a semimartingale $Z(\cdot ) \geqslant 0$ with continuous paths.

It is shown in \cite[Thm~2.3]{ichiba_karatzas_degenerate_22} that, under the \citet{hobson_recurrence_1993} conditions
\begin{equation}
2(\delta_3-\delta_2)+(\delta_1-\delta_2)^->0\quad \text{and}\quad
2(\delta_2-\delta_1)+(\delta_2-\delta_3)^->0,
\label{eq:recurrence_condition}
\end{equation}
the process $\bigl(G(\cdot ),H(\cdot )\bigr)$ of \eqref{eq:gapdef}--\eqref{eq:gapSRBM} is positive recurrent and has a unique invariant measure $\boldpi$ with $\boldpi\bigl((0,\infty)^2\bigr)=1$, to which its time-marginal distributions converge, and exponentially fast, as $t\to\infty$.

This invariant probability measure $\boldpi$ satisfies, in fact is characterized by, the so-called ``Basic Adjoint Relationship'' (BAR) of \cite{harrison_brownian_1987,KuSt-01}. This involves also the ``lateral measures''
\begin{equation}
\label{eq:def:nu1}
\boldnu_1 (A) \overset{\Delta}{=}  \mathbb{E}^\boldpi \int_0^2 \mathds{1}_A(H(t)) \mathrm{d} L^G(t)
\quad\text{and}\quad
\boldnu_2 (A) \overset{\Delta}{=} \mathbb{E}^\boldpi \int_0^2 \mathds{1}_A(G(t)) \mathrm{d}L^H(t)
\end{equation}
for $A\in\mathcal{B}\bigl((0,\infty)\bigr)$, and is cast most concisely as
\begin{equation} 
\label{eq:funceq}
\left[(x-y)^2+2(\delta_2-\delta_1)x+2(\delta_3-\delta_2)y \right]\hatpi (x,y)
=\left(x-\frac{y}{2}\right)\hatnu_1(y)+\left(y-\frac{x}{2}\right)\hatnu_2(x)
\end{equation}
for $(x,y)\in [0,\infty)^2$, in terms of the Laplace transforms
\begin{align}
\hatpi (x,y) &\overset{\Delta}{=} \mathbb{E}^\boldpi \left( e^{-xG(t)-yH(t)} \right)=\iint_{(0,\infty)^2}e^{-xg-yh}\boldpi(\mathrm{d}g,\mathrm{d}h),
\label{eq:def:laplpi}\\
\hatnu_1(y)&\overset{\Delta}{=}\int_0^\infty e^{-yu}\boldnu_1(\mathrm{d}u)=\lim_{x\to\infty}x\hatpi(x,y),
\label{eq:def:laplnu1}\\\label{eq:def:laplnu2}
\hatnu_2(x)&\overset{\Delta}{=}\int_0^\infty e^{-xu}\boldnu_2(\mathrm{d}u)=\lim_{y\to\infty}y\hatpi(x,y).
\end{align}
Conversely, a probability measure ${\bm \pi}$ on $\mathcal B( (0, \infty)^2) $ is invariant for the process $(G(\cdot), H(\cdot))$ of gaps if it, together with two finite measures ${\bm \nu}_1, {\bm \nu}_2$ on $\mathcal B ((0, \infty))$, satisfies  the BAR of~\eqref{eq:funceq}.  

In the arXiv version \cite{ichiba_karatzas_degenerate_22-arxiv} of the paper \cite{ichiba_karatzas_degenerate_22}, the following question was raised: \emph{
Can the invariant probability measure $\boldpi$ of the two-dimensional process
$\bigl(G(\cdot), H(\cdot)\bigr)$ of gaps be computed explicitly?} See \cite[Sec.~2.4]{ichiba_karatzas_degenerate_22-arxiv}.
The goal of the present paper is to answer this question. 

\subsection{Main results}


In what follows, we set
\begin{equation}
\label{eq:def_lambda_1_2}
\lambda_1 := 2(\delta_2-\delta_1)
\quad\text{and}\quad
\lambda_2 := 2(\delta_3-\delta_2).
\end{equation} 
We will impose the ergodicity conditions 
introduced in \eqref{eq:recurrence_condition}; in terms of the quantities $\lambda_1$ and $\lambda_2$ defined in \eqref{eq:def_lambda_1_2}, they can be cast as
$\lambda_2>\frac{1}{2} \lambda_1^+$ and 
 $ \lambda_1>\frac{1}{2} \lambda_2^+ $.
Again as in \cite{ichiba_karatzas_degenerate_22}, and in order to restrict the number of cases to handle, we will impose the stronger condition 
\begin{equation*}
\delta_1<\delta_2<\delta_3,
\end{equation*}
which is equivalent to
\begin{equation}
\label{eq:conddelta}
\lambda_1>0
\quad\text{and}\quad
\lambda_2>0.
\end{equation}
The symmetric case refers to the assumption 
\begin{equation}
\label{eq:symcond}
   \lambda_1 = \lambda_2 = 2(\delta_2-\delta_1)=2(\delta_3-\delta_2)=:\lambda > 0. 
\end{equation}

\noindent {\bf A):} The first main result of this paper gives a simple explicit expression for the Laplace transform of the invariant distribution. 
\begin{theorem}[Laplace transform, general case]
\label{thm:explicit_general}
The Laplace transform \eqref{eq:def:laplnu1} of the lateral measure $\boldnu_1$ in \eqref{eq:def:nu1} is given by
\begin{equation}
\label{eq:expression_hatnu_1_nonsym}
   \hatnu_1({y})=\frac{4\pi}{3\lambda_1\lambda_2}\sin\left(\pi\frac{\lambda_1}{\lambda_1+\lambda_2}\right)\frac{y(y+\lambda_2)(y+2\lambda_1+\lambda_2)}{
   \cos \left( \pi \sqrt{\frac{\lambda_1^2}{(\lambda_1+\lambda_2)^2}-\frac{4y}{\lambda_1+\lambda_2}} \right)
   -\cos \left( \pi \frac{\lambda_1}{\lambda_1+\lambda_2}   \right)}.
\end{equation}
\end{theorem}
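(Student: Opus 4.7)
The plan is to apply the invariance method of Tutte to the BAR \eqref{eq:funceq}. The kernel $K(x,y) := (x-y)^2 + \lambda_1 x + \lambda_2 y$ has a degenerate quadratic part $(x-y)^2$, so the zero set $\{K=0\}$ is a parabola and admits the rational parametrization $x(s) := -s(s-\lambda_2)/\Lambda$, $y(s) := -s(s+\lambda_1)/\Lambda$, with $\Lambda := \lambda_1+\lambda_2$. Substituting into \eqref{eq:funceq} and cancelling the common factor $s$, I obtain the ``kernel equation''
\[
(\lambda_1+2\lambda_2-s)\,\hatnu_1(y(s)) \,=\, (s+2\lambda_1+\lambda_2)\,\hatnu_2(x(s)).
\]
The two $s$-preimages of a given $y$ are exchanged by the involution $\xi(s) := -\lambda_1-s$, those of a given $x$ by $\eta(s) := \lambda_2-s$, and $\eta\xi$ is translation by $\Lambda$. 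Writing $\phi(s) := \hatnu_1(y(s))$ and $\psi(s) := \hatnu_2(x(s))$, $\phi$ is $\xi$-invariant and $\psi$ is $\eta$-invariant; applying $\eta$ to the kernel equation and eliminating $\psi$ gives the quasi-invariance
\[
\phi(\eta s) \,=\, R(s)\,\phi(s), \qquad R(s) \,:=\, \frac{(2\Lambda-s)(\lambda_1+2\lambda_2-s)}{(\Lambda+s)(2\lambda_1+\lambda_2+s)}.
\]

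A natural $\langle\xi,\eta\rangle$-invariant is $w(s) := \cos\bigl(2\pi(s+\lambda_1/2)/\Lambda\bigr)$: it is $\xi$-invariant by evenness of $\cos$, and $\eta$-invariant since $\eta\xi$ translates by $\Lambda$, a period of $w$. Via the identity $y(s) = \bigl(\lambda_1^2/4 - (s+\lambda_1/2)^2\bigr)/\Lambda$, the quantity $w(s)$ equals $\cos\bigl(\pi\sqrt{\lambda_1^2/\Lambda^2 - 4y(s)/\Lambda}\,\bigr)$, precisely the cosine appearing in \eqref{eq:expression_hatnu_1_nonsym}. The decoupling key will be to verify that the polynomial $\rho(y) := y(y+\lambda_2)(y+2\lambda_1+\lambda_2)$ satisfies the \emph{same} quasi-invariance as $\phi$: using the factorizations $y(s)+\lambda_2 = -(s-\lambda_2)(s+\Lambda)/\Lambda$ and $y(s)+2\lambda_1+\lambda_2 = -(s-\Lambda)(s+2\lambda_1+\lambda_2)/\Lambda$, a direct computation will give $\rho(y(\eta s)) = R(s)\,\rho(y(s))$. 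Hence $\phi(s)/\rho(y(s))$ is $\langle\xi,\eta\rangle$-invariant, and therefore a function of $w(s)$ alone.

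The remaining task is to pin down this function of $w$, and that is the main technical obstacle. Using the analyticity and boundedness of $\hatnu_1$ on $\{\mathrm{Re}(y)\geqslant 0\}$ (since $\hatnu_1$ is the Laplace transform of a finite measure, by the recurrence \eqref{eq:recurrence_condition}), together with the observation that the six zeros $\{0,-\lambda_1,\pm\Lambda,\lambda_2,-(2\lambda_1+\lambda_2)\}$ of $\rho(y(s))$ exhaust the zeros of $w(s)-\cos(\pi\lambda_1/\Lambda)$ within a fundamental domain of $\eta\xi$, one is led to the ansatz
\[
\frac{\phi(s)}{\rho(y(s))} \,=\, \frac{C'}{w(s)-\cos(\pi\lambda_1/\Lambda)},
\]
a rational function of $w$ with a single simple pole. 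The constant $C'$ will then be fixed by evaluating \eqref{eq:funceq} along $y=0$ and along $x=0$: these produce the linear system $\hatnu_1(0) = \lambda_1+\hatnu_2(0)/2$ and $\hatnu_2(0) = \lambda_2+\hatnu_1(0)/2$, whence $\hatnu_1(0) = 2(2\lambda_1+\lambda_2)/3$; matching with the $s\to 0$ limit of the displayed formula gives $C' = 4\pi\sin(\pi\lambda_1/\Lambda)/(3\lambda_1\lambda_2)$ and yields \eqref{eq:expression_hatnu_1_nonsym}. The delicate step will be the rigorous justification that $\phi/\rho$, as a $\langle\xi,\eta\rangle$-invariant meromorphic function on $\mathbb{C}$, is actually \emph{rational} in $w$ with a single simple pole; this requires careful growth and meromorphic-continuation estimates on $\hatnu_1$ to rule out spurious singularities.
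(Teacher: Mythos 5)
Your parametrization $x(s)=-s(s-\lambda_2)/\Lambda$, $y(s)=-s(s+\lambda_1)/\Lambda$, the involutions $\xi,\eta$, the quasi-invariance $\phi(\eta s)=R(s)\phi(s)$, the cosine invariant $w$, and especially the decoupling identity $\rho(y(\eta s))=R(s)\rho(y(s))$ with $\rho=D_1$ are all correct and amount to a reparametrized version of the paper's own Tutte-invariant argument: your $w(s)=\cos(2\pi(s+\lambda_1/2)/\Lambda)$ equals $2W(y)-1$, where $W$ is the paper's conformal gluing function of Lemma~\ref{lem:gluing}, and your $\rho$ is precisely the paper's decoupling polynomial $D_1$ from \eqref{eq:decouplingpol}, so you are running the same method on the universal cover ($s$-plane, translation lattice $\Lambda\mathbb Z$) rather than on the parabola itself. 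The normalization $\hatnu_1(0)=\frac23(2\lambda_1+\lambda_2)$ and the value of the constant also match.

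The step you flag as delicate --- showing that the $\langle\xi,\eta\rangle$-invariant function $\phi/\rho\circ y$ is a rational function of $w$ with exactly one simple pole, at $w=\cos(\pi\mu_1)$ --- is not a cosmetic loose end but is exactly where the bulk of the paper's technical work resides. The paper first continues $\hatnu_1$ analytically past its initial half-plane into the domain $\mathcal D_2$ bounded by the parabola $\mathcal P_2$ (Lemmas~\ref{lem:analytic_continuation} and~\ref{lem:pro2}), recasts the problem as a Carleman BVP on $\mathcal P_2$ (Proposition~\ref{prop:BVP_hatnu_1}), and only then applies Liouville's theorem to the single-valued function $f\circ W^{-1}$ obtained by subtracting off the would-be pole (the argument around \eqref{eq: def f}). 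In your $s$-language this corresponds to showing (i) that $\phi(s)=\hatnu_1(y(s))$, a priori defined only where $y(s)$ lies in the half-plane $\Re y\ge 0$, extends meromorphically to a full fundamental strip, and (ii) that it has at most polynomial growth there so that Liouville applies once one passes to $w$; neither is automatic and both need the positivity/recurrence hypotheses. So the blind proof is on the right track with a different but equivalent coordinate system, and a fully rigorous version would essentially have to reproduce Section~\ref{sec:bvp} and the Liouville argument of Section~\ref{sec:tuttegeneral}.

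One small correction to the heuristic pole count: the zeros of $w(s)-\cos(\pi\mu_1)$ in a fundamental domain of $\eta\xi$ are two (the residue classes of $0$ and $-\lambda_1\equiv\lambda_2$ mod $\Lambda$), not six; the six zeros of $\rho(y(s))$ that you list all fall into those two classes mod $\Lambda$, so the intended matching does hold, but the bookkeeping as written is misleading.
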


Exchanging the variables $x \leftrightarrow y$ and the parameters $\lambda_1\leftrightarrow \lambda_2$, we derive a similar expression for the Laplace transform $\hatnu_2({x})$ in \eqref{eq:def:laplnu2}. The bivariate Laplace transform $\hatpi(x,y)$ in \eqref{eq:def:laplpi} is then obtained via the main equation~\eqref{eq:funceq}.

As a Laplace transform, the function $\hatnu_1$ is analytic in the half-plane with negative real part. Since the function $\cos \sqrt{z} = \sum_{n\geq 0} \frac{(-z)^n}{(2n)!}$ is analytic on $\mathbb C$, an immediate consequence of Theorem~\ref{thm:explicit_general} is that $\hatnu_1$ of \eqref{eq:expression_hatnu_1_nonsym} admits a meromorphic continuation to the whole of $\mathbb C$. A globally meromorphic infinite product representation of $\hatnu_1$ will be given in Section~\ref{sec:sum_exp_sum}, see~\eqref{eq:dist_nu_i_inf_conv}.

In the symmetric case, the above result simplifies as follows:
\begin{corollary}[Laplace transform, symmetric case]
\label{thm:explicit_symmetric}
Assuming \eqref{eq:symcond}, 
the Laplace transform \eqref{eq:def:laplnu1} of the lateral measure $\boldnu_1$ in \eqref{eq:def:nu1} is given by
\begin{equation}
\label{eq:expression_hatnu_1_sym}
\hatnu_1({y})=\frac{4\pi}{3\lambda^2}\frac{y(y+\lambda)(y+3\lambda)}{\cos \left(\frac{\pi}{2} \sqrt{1-\frac{8y}{\lambda}}\right)}.
\end{equation}
\end{corollary}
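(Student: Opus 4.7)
The approach is to obtain \eqref{eq:expression_hatnu_1_sym} as a direct specialization of Theorem~\ref{thm:explicit_general} to the case $\lambda_1=\lambda_2=\lambda$. Assumption \eqref{eq:symcond} simply enforces equality of the two parameters and is compatible with both \eqref{eq:conddelta} and the ergodicity conditions $\lambda_2>\frac{1}{2}\lambda_1^+$, $\lambda_1>\frac{1}{2}\lambda_2^+$ (each reducing to $\lambda>\lambda/2$), so Theorem~\ref{thm:explicit_general} applies verbatim and the only remaining work is algebraic simplification of its right-hand side under this substitution.

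Concretely, I would first plug $\lambda_1=\lambda_2=\lambda$ into the prefactor $\frac{4\pi}{3\lambda_1\lambda_2}$, which yields $\frac{4\pi}{3\lambda^2}$. Next, noting that $\frac{\lambda_1}{\lambda_1+\lambda_2}=\frac{1}{2}$, the trigonometric weight $\sin(\pi\lambda_1/(\lambda_1+\lambda_2))$ equals $\sin(\pi/2)=1$, while the subtracted term $\cos(\pi\lambda_1/(\lambda_1+\lambda_2))$ equals $\cos(\pi/2)=0$ and hence drops out of the denominator entirely. The cubic factor $y(y+\lambda_2)(y+2\lambda_1+\lambda_2)$ collapses to $y(y+\lambda)(y+3\lambda)$.

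It only remains to rewrite the surviving cosine. A short simplification of the radicand gives $\frac{\lambda_1^2}{(\lambda_1+\lambda_2)^2}-\frac{4y}{\lambda_1+\lambda_2}=\frac{1}{4}-\frac{2y}{\lambda}=\frac{1}{4}\bigl(1-\frac{8y}{\lambda}\bigr)$, so pulling $\frac{1}{2}$ out of the square root produces exactly $\cos\bigl(\frac{\pi}{2}\sqrt{1-\frac{8y}{\lambda}}\,\bigr)$. Assembling these pieces yields \eqref{eq:expression_hatnu_1_sym}. I do not expect any genuine obstacle here: the computation is pure substitution, made exceptionally clean by the coincidence that $\pi\lambda_1/(\lambda_1+\lambda_2)=\pi/2$ falls precisely at a zero of the cosine, which is what eliminates the additive term in the denominator and leaves the compact form stated in the corollary.
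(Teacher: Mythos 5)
Your proposal is correct and follows exactly the paper's route: the authors note at the end of the proof of Theorem~\ref{thm:explicit_general} that Corollary~\ref{thm:explicit_symmetric} ``follows as an immediate consequence, upon taking $\lambda=\lambda_1=\lambda_2$.'' Your worked-out specialization (prefactor $\frac{4\pi}{3\lambda^2}$, $\sin(\pi/2)=1$, $\cos(\pi/2)=0$ eliminating the additive term, radicand factoring as $\tfrac14(1-8y/\lambda)$) is the intended computation and is accurate.
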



While the above results provide fairly simple expressions for the Laplace transforms $\hatnu_1({y})$, $\hatnu_2({x})$ and $\hatpi(x,y)$ of the marginal and the joint distributions, they do not address the question of finding the associated density functions in closed form. This is the topic of our subsequent results; we shall actually propose two ways to compute these densities. 

\medskip

\noindent {\bf B):} The first method appears as a consequence of Theorem~\ref{thm:explicit_general} and Corollary~\ref{thm:explicit_symmetric}: classical Mittag-Leffler expansions allow us to express the trigonometric Laplace transforms \eqref{eq:expression_hatnu_1_nonsym} and \eqref{eq:expression_hatnu_1_sym} as infinite sums, each term of which may be interpreted as the Laplace transform of an exponential term. 

Introduce the parameters 
\begin{equation}
\label{eq:def_mu}
   \mu_1=\frac{\lambda_1 }{\lambda_1  + \lambda_2 }
   \quad \text{and}\quad
   \mu_2=\frac{\lambda_2 }{\lambda_1  + \lambda_2 } = 1-\mu_1,
\end{equation}
which belong to $(0,1)$ due to our hypothesis \eqref{eq:conddelta}.
\begin{theorem}[Density on the boundary, general case]
\label{cor:boundary_density_non-sym}
For $i\in\{1,2\}$, the density function $\nu_i$ of the measure $\boldnu_i$ in \eqref{eq:def:nu1} is equal to
\begin{multline}
\label{eq:formula_boundary_density_original_question_non_symmetric}
    \nu_i(u) =\\ 
    -\frac{4(\lambda_1+\lambda_2)^4}{3\lambda_1\lambda_2}
    \sum_{n\in\mathbb Z}(n-1)n(n+1)(n - 1 + \mu_i)
    (n + \mu_i)(n + 1 + \mu_i )(n + \mu_i /2)e^{-(n^2+\mu_i n)(\lambda_1+\lambda_2)u}.
\end{multline}
\end{theorem}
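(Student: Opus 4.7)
My plan is to invert the closed-form Laplace transform from Theorem~\ref{thm:explicit_general} by a Mittag-Leffler partial fraction expansion, and then identify each resulting summand with the Laplace transform of an exponential. By the $\lambda_1\leftrightarrow\lambda_2$ symmetry together with the analogous formula for $\hatnu_2$, it is enough to handle $i=1$.

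The first step is to locate the poles of $\hatnu_1$. Because $z\mapsto\cos(\pi\sqrt{z})$ is entire (only even powers of $\pi\sqrt{z}$ appear in its Taylor series), the denominator
\begin{equation*}
D(y):=\cos\bigl(\pi\sqrt{\mu_1^2-\tfrac{4y}{\lambda_1+\lambda_2}}\bigr)-\cos(\pi\mu_1)
\end{equation*}
in \eqref{eq:expression_hatnu_1_nonsym} is an entire function of $y$. Solving $\cos(\pi\sqrt{z})=\cos(\pi\mu_1)$ yields $\sqrt{z}=2n\pm\mu_1$ for $n\in\mathbb Z$; squaring and merging the two sign families by $n\mapsto -n$ produces the simple zeros
\begin{equation*}
y_n=-(\lambda_1+\lambda_2)\,n(n+\mu_1),\qquad n\in\mathbb Z.
\end{equation*}
Among these, the indices $n\in\{-1,0,1\}$ give $y_n\in\{-\lambda_2,0,-(2\lambda_1+\lambda_2)\}$, which coincide exactly with the roots of the numerator polynomial $y(y+\lambda_2)(y+2\lambda_1+\lambda_2)$; these three poles are therefore removable.

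For the residue computation, I would view each of the numerator factors as a quadratic in $n$ with known roots. The factorisations
\begin{align*}
y_n&=-(\lambda_1+\lambda_2)\,n(n+\mu_1),\\
y_n+\lambda_2&=-(\lambda_1+\lambda_2)\,(n+1)(n-1+\mu_1),\\
y_n+2\lambda_1+\lambda_2&=-(\lambda_1+\lambda_2)\,(n-1)(n+1+\mu_1)
\end{align*}
come out in a few lines. The chain rule applied to $h(z)=\cos(\pi\sqrt{z})$, together with the identity $\sin(\pi(2n+\mu_1))=\sin(\pi\mu_1)$, delivers $D'(y_n)=2\pi\sin(\pi\mu_1)/[(\lambda_1+\lambda_2)(2n+\mu_1)]$. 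The $\pi\sin(\pi\mu_1)$ factors in the numerator of $\hatnu_1$ and in $D'(y_n)$ cancel, and the residue at $y_n$ reads off as exactly the coefficient of $e^{-(n^2+\mu_1 n)(\lambda_1+\lambda_2)u}$ prescribed by \eqref{eq:formula_boundary_density_original_question_non_symmetric}.

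The final step is the passage from partial fractions to exponentials. The residues grow like $n^7$ while $|y_n|\sim n^2$, so the naive Mittag-Leffler series $\sum_n r_n/(y-y_n)$ is not absolutely convergent and must be regularised by subtracting a polynomial in $y$. Such a polynomial is supported at $\{0\}$ under inverse Laplace transform and therefore contributes nothing on $(0,\infty)$; moreover it must vanish, since as $y\to+\infty$ the denominator becomes $\cosh$ of $\pi\sqrt{4y/(\lambda_1+\lambda_2)-\mu_1^2}$ and $\hatnu_1(y)\to0$. The candidate right-hand side of \eqref{eq:formula_boundary_density_original_question_non_symmetric} on the other hand converges absolutely for every $u>0$, thanks to its Gaussian-type decay $e^{-cn^2 u}$. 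The cleanest implementation is to \emph{define} $\nu_1(u)$ as that sum, compute its Laplace transform termwise by dominated convergence, and then match it with $\hatnu_1$ pole-by-pole; the difference is an entire function of controlled growth, hence identically zero by a Phragmén--Lindelöf / Liouville argument. This convergence-and-uniqueness step is the main obstacle; once it is settled, formula \eqref{eq:formula_boundary_density_original_question_non_symmetric} drops out immediately.
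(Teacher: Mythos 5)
Your overall strategy — invert the explicit Laplace transform \eqref{eq:expression_hatnu_1_nonsym} by a residue / Mittag-Leffler expansion — is a genuine alternative to the paper's route, and the residue computations are correct: the pole locations $y_n=-(\lambda_1+\lambda_2)n(n+\mu_1)$, the removability at $n\in\{-1,0,1\}$, the factorisations of $y_n$, $y_n+\lambda_2$, $y_n+2\lambda_1+\lambda_2$, the value of $D'(y_n)$, and the resulting coefficient of $e^{-(n^2+\mu_1 n)(\lambda_1+\lambda_2)u}$ all check out and reproduce \eqref{eq:formula_boundary_density_original_question_non_symmetric}.

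However, the step you yourself flag as ``the main obstacle'' is genuinely not closed, and the ``cleanest implementation'' you propose does not work as stated. Dominated convergence cannot justify term-by-term Laplace transformation of the candidate sum: the majorant $\sum_{n}|r_n|e^{-a_n u}$ with $|r_n|\sim n^7$ and $a_n\sim n^2 u$ behaves like $u^{-4}$ near $u=0^+$, hence is not integrable on $(0,\infty)$. Equivalently, the formal term-by-term Laplace transform $\sum_n r_n/(y+a_n)$ has general term $\sim n^5$ and is not even conditionally convergent, so there is nothing to ``match with $\hatnu_1$ pole-by-pole'' until one regularises — and the regularisation is not merely a single global polynomial $P(y)$ as you suggest, but rather a subtraction of Taylor tails inside each Mittag-Leffler summand, which does not disappear under Laplace inversion the way a polynomial would. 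The cancellations that make the sum in \eqref{eq:formula_boundary_density_original_question_non_symmetric} integrable near $u=0$ are exactly what you are not controlling.

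The paper closes precisely this gap by a different decomposition. It writes $\hatnu_1(y)=\frac{4(\lambda_1+\lambda_2)}{3\lambda_1\lambda_2}D_1(y)\cdot\widehat{\Theta}(y)$ where $\widehat{\Theta}$ is the Laplace transform of $u\mapsto\theta_{\mu_1}(e^{-(\lambda_1+\lambda_2)u})$; the residues of $\widehat\Theta$ grow only linearly, so its partial-fraction expansion (Lemma~\ref{lem:Lap_theta_mu}) is under control. The factor $D_1(y)$ is then converted into the differential operator $\D_1$ acting on $\theta_{\mu_1}(e^{-(\lambda_1+\lambda_2)\cdot})$ via integration by parts. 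The boundary terms at $u=0$ vanish, and the Laplace integral of $\theta_{\mu_1}(e^{-(\lambda_1+\lambda_2)u})$ is absolutely convergent, because of the Jacobi-type modular identity (Lemma~\ref{lem:Jacobi_transform}), which shows that $\theta_{\mu_1}(e^{-v})$ and all its derivatives vanish to all orders as $v\to0^+$. That modular identity is the key ingredient your proposal lacks: it is what makes the cancellations in the signed series quantitative and licenses the interchange of limit and integral. To repair your argument along your own lines, you would need either (i) to prove an analogue of that Jacobi transformation directly for the candidate sum in \eqref{eq:formula_boundary_density_original_question_non_symmetric}, showing it decays as $u\to0^+$, or (ii) to work with Abel-summed / symmetric partial sums and control the error with a Phragm\'en--Lindel\"of bound that you would then have to actually establish, not just invoke.
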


See Figure~\ref{fig:graph_1D} for an example of graph of $\nu_i$. 

Theorem~\ref{cor:boundary_density_non-sym} has an interesting reformulation in terms of a certain Jacobi theta-type function, namely
\begin{equation}
\label{eq:def_theta_mu}
   \theta_{\mu_i}(q) := \sum_{n\in\mathbb Z} \Bigl(n+\frac{\mu_i}{2}\Bigr) q^{n(n+\mu_i)}, \quad \vert q\vert <1,
\end{equation}
which is intimately related to our model and has a direct probabilistic interpretation in terms of Brownian motion conditioned to stay in an interval, see \eqref{eq:probab_interpretation_theta_mu} in Appendix~\ref{sec:app_theta_mu}. More precisely, introducing the differential operator 
\begin{equation}
\label{eq:diff_op}
    \D_1[f] = f'''+2(\lambda_1 + \lambda_2) f''+\lambda_2(\lambda_2 + 2\lambda_1) f',
\end{equation}
we shall show that
\begin{equation} \label{eq: nu1(u)}
    \nu_1(u)= \frac{4(\lambda_1+\lambda_2)}{3\lambda_1\lambda_2}  \D_1 \left[ \theta_{\mu_1}\left(e^{-(\lambda_1+\lambda_2) \boldsymbol{\cdot} }\right) \right](u).
\end{equation}
A similar expression holds for $\nu_2$.
Notice that the differential operator \eqref{eq:diff_op} corresponds to the polynomial $y(y+\lambda_2)(y+2\lambda_1+\lambda_2)$ appearing in the formula \eqref{eq:expression_hatnu_1_nonsym} of Theorem~\ref{thm:explicit_general}, in the sense that 
\begin{equation*}
   \D_1 = \frac{\mathrm{d}}{\mathrm{d}u}\left(\frac{\mathrm{d}}{\mathrm{d}u}+\lambda_2\right)\left(\frac{\mathrm{d}}{\mathrm{d}u}+2\lambda_1+\lambda_2\right).
\end{equation*}
We will elaborate on this connection in Section~\ref{sec:boundary_density}.

Theorem~\ref{cor:boundary_density_non-sym} also contains the case of equal parameters $\lambda_1=\lambda_2$, corresponding to $\mu_1=\mu_2=\frac{1}{2}$. However, in this symmetric case, it is natural to reformulate the bi-infinite summation \eqref{eq:formula_boundary_density_original_question_non_symmetric} as a sum over the positive integers, using natural symmetries. More precisely, one has:
\begin{corollary}[Density on the boundary, symmetric case]
\label{cor:boundary_density_sym}
If $\lambda_1=\lambda_2=\lambda$, for $i\in\{1,2 \}$ the density function $\nu_i$ of the measure $\boldnu_i$ in \eqref{eq:def:nu1} is equal
to
\begin{align}
\label{eq:formula_boundary_density_original_question}
   \frac{\nu_i(u)}{\lambda^2} &=\sum_{n\geq 3}\frac{(-1)^{n-1}}{12}(n-2)(n-1)n(n+1)(n+2)(n+3)(2n + 1)\exp\left(-\frac{n(n+1)}{2}\lambda u\right)\\\nonumber
   &=420\, \Bigl( e^{-6\lambda u}-9e^{-10\lambda u}+44e^{-15\lambda u}- 156e^{-21\lambda u}+ 450e^{-28\lambda u} -\ldots \Bigr)\, , \, \, \, i = 1, 2\, . 
\end{align}
\end{corollary}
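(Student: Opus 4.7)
The plan is to derive Corollary~\ref{cor:boundary_density_sym} as a direct specialization of Theorem~\ref{cor:boundary_density_non-sym} to the symmetric regime $\lambda_1=\lambda_2=\lambda$, where $\mu_1=\mu_2=\tfrac{1}{2}$, followed by a reindexing that folds the bi-infinite sum over $n\in\mathbb Z$ into a one-sided sum indexed by $n\geq 3$. After substitution the prefactor reduces to $-4(2\lambda)^4/(3\lambda^2)=-64\lambda^2/3$, the exponent becomes $-(n^2+n/2)(2\lambda)u=-n(2n+1)\lambda u$, and the seven-term polynomial coefficient reads $(n-1)n(n+1)(n-\tfrac{1}{2})(n+\tfrac{1}{2})(n+\tfrac{3}{2})(n+\tfrac{1}{4})$.

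The key observation is that the values $\{n(2n+1):n\in\mathbb Z\}$ are exactly the triangular numbers $\{k(k+1)/2:k\geq 0\}$, via the bijection $n\mapsto 2n$ (even $k$) for $n\geq 0$ and $n\mapsto -2n-1$ (odd $k$) for $n<0$; equivalently, the negative half of the sum corresponds to the positive half through the involution $n\mapsto -n-1$. Plugging $n=k/2$ for even $k$ and $n=-(k+1)/2$ for odd $k$, and clearing denominators, the polynomial coefficient becomes $(-1)^k(k-2)(k-1)k(k+1)(k+2)(k+3)(2k+1)/256$ in both cases; the sign $(-1)^k$ records the seven minus signs accumulated in the negative-$n$ substitution and matches the parity of $k$. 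The contributions $k\in\{0,1,2\}$ vanish thanks to the factor $(n-1)n(n+1)$, which kills $n\in\{-1,0,1\}$, so only $k\geq 3$ survives; this explains the lower bound of summation in \eqref{eq:formula_boundary_density_original_question}.

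Combining the prefactor $-64\lambda^2/3$ with the denominator $256$ yields $-\lambda^2/12$, and the external minus sign merges with $(-1)^k$ into $(-1)^{k-1}/12$, producing the announced formula. The numerical expansion on the second line is then a direct check of the leading coefficients, e.g.\ $n=3$ gives $1\cdot 2\cdot 3\cdot 4\cdot 5\cdot 6\cdot 7/12 = 420$, with the subsequent ratios $-9,+44,-156,+450$ obtained identically. Since the argument is purely a reindexing and algebraic simplification of a formula already established, no substantial obstacle arises; the only point requiring care is the sign bookkeeping in the $n<0$ half of the sum, together with verification that the vanishing low-order terms are consistent with the truncation at $n=3$.
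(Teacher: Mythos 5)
Your proof is correct and follows essentially the same route as the paper's: specialize Theorem~\ref{cor:boundary_density_non-sym} to $\mu_i=\tfrac12$, observe the coincidence of orbits $\{n(2n+1):n\in\mathbb Z\}=\{\tfrac12 k(k+1):k\geq0\}$ (the paper's \eqref{eq:equality_orbits}) with even $k$ coming from $n\geq0$ and odd $k$ from $n<0$, reindex, and absorb the accumulated powers of $2$ and the sign into $\tfrac{(-1)^{k-1}}{12}$. All the numerical checks line up. The only imprecision is your parenthetical remark that the negative half maps onto the positive half via the involution $n\mapsto-n-1$: this map is an involution of $\mathbb Z$ but does not preserve the exponent $n(2n+1)$ (it sends it to $(n+1)(2n+1)$), so it is not a symmetry of the sum; what actually happens, as you correctly compute, is that the two halves interleave to produce the full set of triangular numbers. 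This side remark is harmless since the actual substitutions $n=k/2$ and $n=-(k+1)/2$ you use are correct and match the paper.
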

In the context of a two-queue fluid polling model and the corresponding two-dimensional degenerate Brownian motion reflected normally on the boundary of the nonnegative orthant $[0, \infty)^2$, the recent paper \cite{kapodistria_saxena_boxma_kella_2023} analyses a functional equation quite similar to ours. The main equation (see \cite[Eq.~(14)]{kapodistria_saxena_boxma_kella_2023}) of that paper corresponds to \eqref{eq:funceq}, if the prefactors $(x-\frac{y}{2})$ and $(y-\frac{x}{2})$ are replaced by $x$ and $y$, respectively. The authors obtain various results, such as the Laplace transform of the total workload (their Theorem~2, which is close to our Corollary~\ref{thm:explicit_symmetric}, assuming symmetry of the parameters), and the heavy-traffic stationary workload distribution (their Lemma~4, which resembles our Corollary~\ref{cor:boundary_density_sym}).

\medskip

\begin{figure}[hbtp]
\centering
\vspace{-2mm}
\includegraphics[scale=0.3]{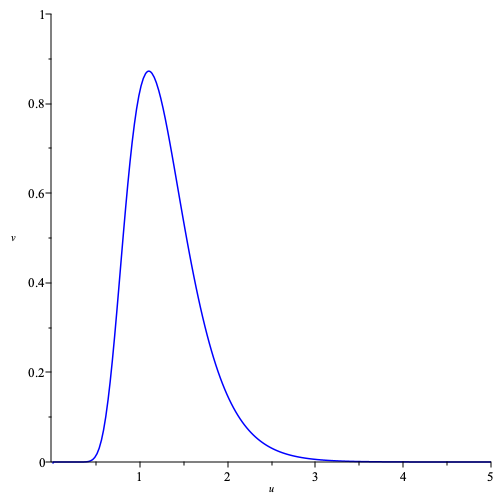}\qquad \qquad 
\includegraphics[scale=0.3]{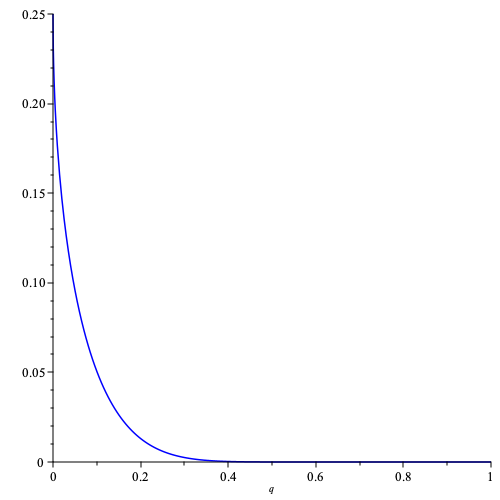}
\vspace{-2mm}
\caption{Left: graph of the function $\nu_2(u)$ for the parameters $(\lambda_1,\lambda_2)=(\frac{1}{6},\frac{5}{6})$. Right: graph of the function $\theta_{\mu}(q)$ in the case $\mu=\frac{1}{2}$.}
\label{fig:graph_1D}
\end{figure}

\noindent {\bf C):} The second approach allowing us to calculate the densities is called the ``compensation approach''; it is very different and brings two advantages. The first advantage is that it does not require a bivariate Laplace inversion. The second advantage is that it works directly for the bivariate density function, without recourse to the univariate boundary density functions. This approach is inspired by the paper \cite{adan_wessels_zijm_compensation_93}, which proposed a compensation methodology for computing the stationary distribution of certain singular random walks in the positive quarter-plane. While this technique has been applied to a variety of contexts in discrete probability, our paper contains its first application to diffusions, to the best of our knowledge. 

\begin{theorem}[Density of the invariant measure, general case]
\label{thm:maindensity}
The density $\pi(u,v)$ of the invariant measure $\boldpi$ satisfies for $(u,v)\in[0,\infty)^2$
\begin{equation} 
\label{eq: thm 3} 
   \pi(u,v)=\sum_{n=0}^\infty \left(Cc_n e^{-a_nu-b_nv}+C'c'_ne^{-a'_nu-b'_nv}\right),
\end{equation}
where
\begin{itemize}
    \item the sequences $(a_n,b_n)_{n\geq0}$ and $(a'_n,b_n')_{n\geq0}$ are given in \eqref{eq:values_(a_n,b_n)} and \eqref{eq:values_(ap_n,bp_n)},
    \item the constants $C$ and $C'$ are computed in \eqref{eq:choice_C_C'},
    \item the $(c_n)_{n\geq0}$ are defined in \eqref{eq:values_c_n_even}--\eqref{eq:values_c_n_odd}, and the $(c'_n)_{n\geq0}$ are obtained from the $(c_n)_{n\geq0}$ after interchanging $\lambda_1$ and $\lambda_2$.
\end{itemize} 
\end{theorem}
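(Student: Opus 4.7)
The plan is to verify, via the Basic Adjoint Relationship~\eqref{eq:funceq}, that the bivariate series~\eqref{eq: thm 3} is the invariant density. Since the BAR characterizes $\boldpi$ uniquely, it suffices to exhibit a non-negative integrable function on $[0,\infty)^2$ of unit total mass that solves~\eqref{eq:funceq} together with suitable lateral measures.

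The construction rests on two ingredients. First, on the interior $(0,\infty)^2$, an exponential $e^{-au-bv}$ solves the formal adjoint PDE if and only if $(a,b)$ lies on the \emph{kernel curve} $\mathcal{E}\colon (a-b)^2 = \lambda_1 a + \lambda_2 b$. At the level of Laplace transforms this is equivalent to the partial-fraction identity
\begin{equation*}
\frac{K(x,y)}{(x+a)(y+b)} = -2 + \frac{x+\tilde a}{y+b} + \frac{y+\tilde b}{x+a}, \qquad \tilde a := 2b+\lambda_1-a,\quad \tilde b := 2a+\lambda_2-b,
\end{equation*}
where $K(x,y)=(x-y)^2+\lambda_1 x+\lambda_2 y$. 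Any ansatz $\hatpi(x,y)=\sum_n c_n/[(x+a_n)(y+b_n)]$ with $(a_n,b_n)\in\mathcal{E}$ then turns $K\hatpi$ into a sum of poles simple either in $x$ alone or in $y$ alone, which is exactly the form of the RHS of the BAR. Second, the set of exponents is built by the \emph{compensation scheme}: the initial point $(a_0,b_0)\in\mathcal{E}$ is selected so that at the pole $x=-a_0$ the residue of $K\hatpi$ already has the required form $(y+a_0/2)c_0$, giving the relation $\tilde b_0=a_0/2$, which together with $\mathcal{E}$ uniquely determines $(a_0,b_0)$. A compensating term is then added to cancel the uncontrolled residue appearing at $y=-b_0$: its $b$-coordinate must equal $b_0$, and its $a$-coordinate is the other root of the quadratic $\mathcal{E}\cap\{b=b_0\}$. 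This new term introduces a violation at another $x$-pole, which we cancel by a further term, and so on. A direct computation shows that the successive exponents stay on the integer-indexed lattice of poles of $\hatnu_1,\hatnu_2$ identified from Theorem~\ref{thm:explicit_general}, yielding the closed-form values in~\eqref{eq:values_(a_n,b_n)}; iterating the residue-matching gives a first-order recursion for $c_n$ whose solution is~\eqref{eq:values_c_n_even}--\eqref{eq:values_c_n_odd}. The primed sequence arises symmetrically by starting the compensation at the first pole of $\hatnu_1$ rather than $\hatnu_2$, which amounts to the swap $\lambda_1\leftrightarrow\lambda_2$. Finally, the constants $C$ and $C'$ are fixed by two independent linear conditions, typically unit total mass together with one further matching value from Theorem~\ref{thm:explicit_general}.

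\textbf{The main obstacle} is to pass from the formal identity to a rigorous one. Because $\mathcal{E}$ is a parabola, the successive exponents grow like $(\lambda_1+\lambda_2)n^2$, so the factors $e^{-a_nu-b_nv}$ decay super-polynomially and dominate the polynomial growth of $|c_n|$ read off the closed-form solution; this yields absolute and locally uniform convergence of the double series on $(0,\infty)^2$. Fubini together with the partial-fraction identity above then justifies a termwise verification of the BAR on the Laplace-transform side, after which the uniqueness statement recalled after~\eqref{eq:funceq} identifies the series with the true invariant density. Additional care near the boundary allows one to recover the univariate densities of Theorem~\ref{cor:boundary_density_non-sym}, providing a direct consistency check between approaches~B and~C.
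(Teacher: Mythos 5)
Your proposal is correct in spirit and in most of its technical content: the compensation principle, the curve $\mathcal{E}$ (which is the parabola $\mathcal{P}^*$ of~\eqref{eq:P*}), the alternating residue-matching scheme, and the polynomial growth of $c_n$ against the quadratic growth of $a_n,b_n$ guaranteeing locally uniform convergence, are all in line with the paper. The Laplace-side partial-fraction identity $\frac{K(x,y)}{(x+a)(y+b)}=-2+\frac{x+\tilde a}{y+b}+\frac{y+\tilde b}{x+a}$ for $(a,b)\in\mathcal{E}$ is a valid reformulation of the automorphisms $\zeta,\eta$ used in the paper, and is a nice way to see why the compensation scheme closes on the Laplace-transform side.

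There is, however, a genuine gap in the final identification step. You propose to invoke the uniqueness of the invariant measure among probability measures satisfying the BAR, and hence you need to ``exhibit a non-negative integrable function of unit total mass.'' Non-negativity of $Cp+C'p'$ is not at all obvious from the series~\eqref{eq: thm 3} --- the $c_n$ alternate in sign and grow like $n^8$ (see~\eqref{eq:asymptotics_c_n}) --- and in fact the paper only \emph{conjectures} a uniqueness statement for positive solutions, without proving positivity directly. The paper's proof avoids this entirely: it first shows (Propositions~\ref{prop:comp_p(u,0)}--\ref{prop:comp_p'(u,0)}, Corollary~\ref{cor:nuintermofp}) that $Cp(\cdot,0)+C'p'(\cdot,0)$ and $Cp(0,\cdot)+C'p'(0,\cdot)$ coincide, term by term, with the explicit expansion of $\nu_2$ and $\nu_1$ from Theorem~\ref{cor:boundary_density_non-sym}; this both fixes $C,C'$ and yields $\widehat f_i=\widehat{\bm\nu}_i$ globally. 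Plugging these equalities into the BAR for $\widehat f$ and for $\widehat{\bm\pi}$ then gives $K\widehat f=K\widehat{\bm\pi}$, hence $\widehat f=\widehat{\bm\pi}$ and $\pi=f$ by Laplace injectivity --- no positivity needed. Your determination of $C,C'$ by ``unit total mass together with one further matching value'' is too weak to run this argument: matching at a single point of $\widehat\nu_1$ does not force $\widehat f_1\equiv\widehat{\bm\nu}_1$, and without that global matching the BAR alone does not pin down $\widehat f$. You would either have to carry out the full pointwise comparison with Theorem~\ref{cor:boundary_density_non-sym} (as the paper does), or supply an independent proof of non-negativity of the series, which is a nontrivial open point.
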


\begin{figure}[hbtp]
\centering
\vspace{-6.5mm}
\includegraphics[scale=0.3]{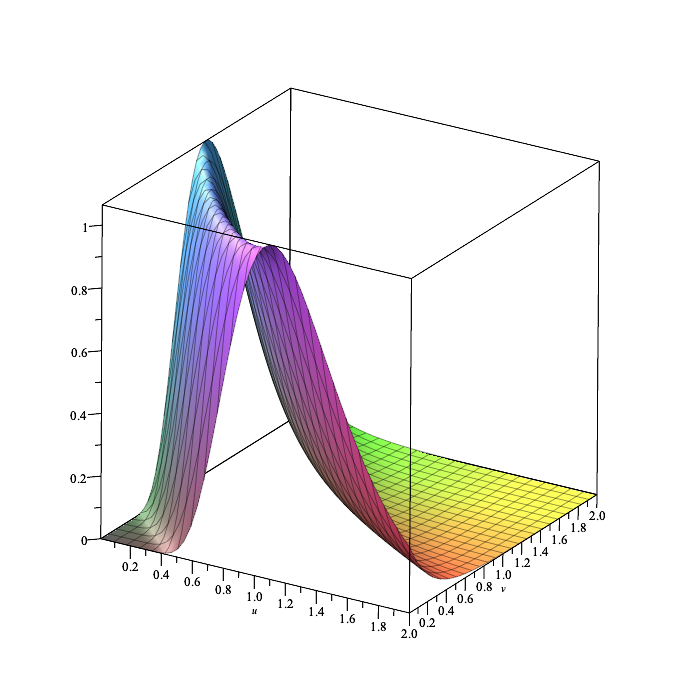}
\includegraphics[scale=0.3]{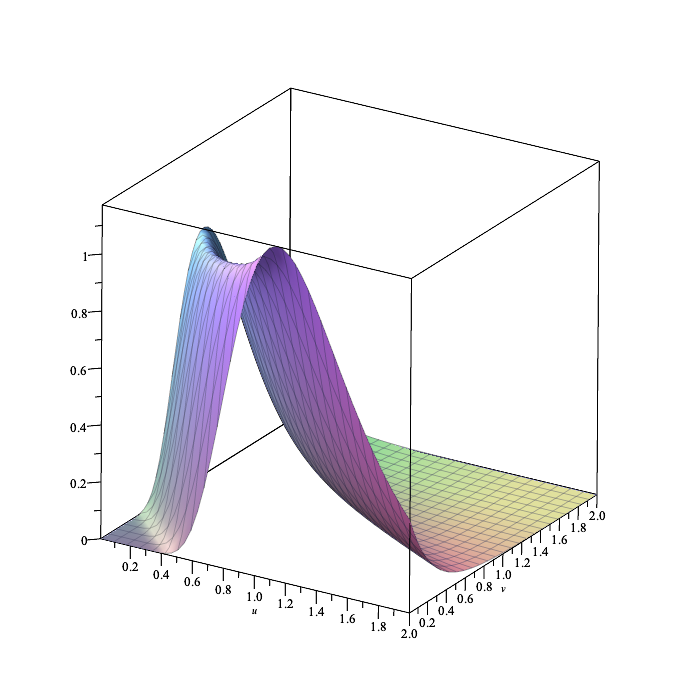}
\vspace{-10mm}
\caption{Example of graph of the density $\pi(u,v)$, on the left for the parameters $(\lambda_1,\lambda_2)=(\frac{1}{2},\frac{1}{2})$, on the right for $(\frac{1}{6},\frac{5}{6})$.}
\label{fig:graph}
\end{figure}
See Figure~\ref{fig:graph} for two illustrations of Theorem~\ref{thm:maindensity}. The compensation method used to show Theorem~\ref{thm:maindensity} is independent of the other techniques developed in our paper; however, to obtain the constants $C$ and $C'$ in \eqref{eq: thm 3}, we make use of Theorem~\ref{thm:explicit_general}.

In the symmetric case, the above result can be simplified as follows:
\begin{corollary}[Density of the invariant measure, symmetric case]
\label{thm:formula_density_original_question}
Introduce three sequences $(a_n)_{n\geq0}$, $(b_n)_{n\geq0}$ and $(c_n)_{n\geq0}$ as follows:\footnote{The first few values of $a_{2n}$ are $10,21,36,55$; those of $b_{2n}$ are $6,15,28,45$; finally, those of $c_{n}$ are $1, -10, 54,- 210, 660, -1782$, see the entry \href{https://oeis.org/A053347}{A053347} in the OEIS (Online Encyclopedia of Integer Sequences).}
\begin{equation*}
\left\{\begin{array}{rcl}
a_{2n} & =& (n+2)(2n+5) ,\\
a_{2n+1} & =& a_{2n},\\
b_{2n} & =&(n+2)(2n+3),\\
b_{2n+1} & =& b_{2n+2},\\
c_n&=&\displaystyle (-1)^n \binom{n+7}{7}\frac{n+4}{4}=(-1)^n\frac{(n + 1)(n + 2)(n + 3)(n + 4)^2(n + 5)(n + 6)(n + 7)}{20160}.\\
 \end{array}\right.
\end{equation*}
If $\lambda_1=\lambda_2=\lambda$, one has
\begin{equation}
\label{eq:formula_density_orthogonal_reflections}
   \pi(u,v)=420\bigl(p(u,v)+p(v,u)\bigr),
\end{equation}
where
\begin{equation}
 \label{eq:p_formula}
   \frac{p(u,v)}{\lambda^2}= \sum_{n\geq0} c_n \exp\bigl(-\lambda(a_{n} u +b_{n}v)\bigr).
\end{equation}
\end{corollary}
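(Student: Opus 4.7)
The plan is to specialize the general formula \eqref{eq: thm 3} of Theorem~\ref{thm:maindensity} to $\lambda_1=\lambda_2=\lambda$ and to exploit the resulting $u\leftrightarrow v$ symmetry. Since the primed sequences $(a_n',b_n',c_n')$ and the constant $C'$ are obtained from their unprimed counterparts by swapping $\lambda_1$ and $\lambda_2$, and since this swap is the identity when $\lambda_1=\lambda_2$ but exchanges the two coordinate axes in the compensation construction, one expects $a_n'=b_n$, $b_n'=a_n$, $c_n'=c_n$ and $C'=C$. Equation~\eqref{eq: thm 3} then collapses to
\begin{equation*}
   \pi(u,v)=C\sum_{n\geq0}c_n\bigl(e^{-a_nu-b_nv}+e^{-b_nu-a_nv}\bigr)=p(u,v)+p(v,u),
\end{equation*}
reducing the corollary to identifying the closed forms of $a_n,b_n,c_n$ and to showing that $C=420\lambda^2$.

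Next, I would specialize the recursions \eqref{eq:values_(a_n,b_n)} and \eqref{eq:values_c_n_even}--\eqref{eq:values_c_n_odd}. In the symmetric setting, the two alternating reflection maps that produce $(a_{n+1},b_{n+1})$ from $(a_n,b_n)$ share a common fixed point, which forces the doubling pattern $a_{2n}=a_{2n+1}$ and $b_{2n+1}=b_{2n+2}$ stated in the corollary. A direct induction on $n$ then verifies that $a_{2n}=(n+2)(2n+5)\lambda$ and $b_{2n}=(n+2)(2n+3)\lambda$ satisfy the recursion. Substituting the candidate $c_n=(-1)^n\binom{n+7}{7}(n+4)/4$ into the linear recursion imposed on the boundary, and using the formulas just obtained for $a_n,b_n$, should produce a telescoping identity confirming that this is the (unique, up to a scalar) summable solution.

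Finally, to pin down $C$ I would compute the lateral Laplace transform arising from $\pi$ via \eqref{eq:def:laplnu1} and compare it with the closed form \eqref{eq:expression_hatnu_1_sym} of Corollary~\ref{thm:explicit_symmetric}. Integrating termwise yields a Mittag-Leffler series with simple poles at $y=-\lambda b_n$, which must coincide with those of \eqref{eq:expression_hatnu_1_sym}; matching a single residue, or the value at $y=0$ together with the normalization $\boldpi((0,\infty)^2)=1$, then determines $C=420\lambda^2$.

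The main obstacle is the middle step: verifying that the closed-form expression for $c_n$ satisfies the compensation recursion, whose coefficients are rational functions of the kernel evaluated at the exponents $(a_k,b_k)$. Although the interlacing structure simplifies matters considerably in the symmetric case, one still has to manipulate a polynomial identity of rather high degree. The symmetry reduction and the final constant matching are comparatively routine.
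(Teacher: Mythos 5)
Your proposal is correct and follows essentially the same route as the paper: specialize Theorem~\ref{thm:maindensity} and the explicit sequences of Propositions~\ref{prop:values_an_bn}--\ref{prop:values_cn} and Corollaries~\ref{cor:values_polynomial_c_n} and \ref{cor:nuintermofp} to $\lambda_1=\lambda_2=\lambda$, using the $\lambda_1\leftrightarrow\lambda_2$ swap to identify the primed data with the unprimed data with $u$ and $v$ exchanged, and matching the lateral Laplace transform with Corollary~\ref{thm:explicit_symmetric} to fix $C=C'=420\lambda^2$. One small correction: the interlacing $a_{2n+1}=a_{2n}$ and $b_{2n+1}=b_{2n+2}$ is not a consequence of symmetry or of a common fixed point of the two reflection maps; it already holds in the general case (see \eqref{eq:values_(a_n,b_n)}), because $\zeta$ fixes the first coordinate and $\eta$ fixes the second.
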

Here are the first few terms in the expansion of $p(u,v)$ in \eqref{eq:p_formula}:
\begin{multline*}
   \frac{p(u,v)}{\lambda^2}=e^{-\lambda(10u + 6v)} - 10e^{-\lambda(10u + 15v)} + 54e^{-\lambda(21u + 15v)} - 210e^{-\lambda(21u + 28v)} \\+ 660e^{-\lambda(36u + 28v)} - 1782e^{-\lambda(36u + 45v)}+4290e^{-\lambda(55u + 45v)}-\ldots
\end{multline*}

\noindent {\bf D):} In our last result, we show that in the stationary regime, the distribution of the sum $G+H$ of gaps (in the symmetric case) and the density function ${\nu}_i$ (in the general case) can be written as an infinite convolution of exponential distributions.

\begin{theorem}
\label{thm:sum_exp_sum}
\begin{enumerate}[label={(\roman{*})},ref={(\roman{*})}]
    \item\label{thm4.1}
Assume \eqref{eq:symcond}. Under the stationary distribution ${\bm \pi}$, the probability density function $\,{\bm \sigma} \,$ of the sum $\,G+H\,$ of gaps for the degenerate reflected Brownian motion in \eqref{eq:gapdef}--\eqref{eq:gapSRBM}, is that of the infinite sum $\, \sum_{k=1}^{\infty} {\bm \varepsilon}_{k}\,$ of independent exponential random variables $\, \{ {\bm \varepsilon}_{k}\}_{k \in \mathbb N} \,$ with respective  parameters $\, {\bm \ell}_{k}\,$ given by 
\begin{equation} 
\label{eq: App2}
{\bm \ell}_{k} \, :=\, \frac{\,\lambda\,}{\,8 \,}\bigl((2k+5)^{2} - 1\bigr) \, =\,  \frac{\,\lambda\,}{\,2\,}(k + 2) (k+3), \quad k \in \mathbb N,
\end{equation}
namely, 
\begin{equation} 
\label{eq: App3}
\mathbb P^{\bm \pi} ( G (T) + H (T)\in {\mathrm d} z) \, =\,  {\bm \sigma}(z) {\mathrm d} z \, =\,  \mathbb P^{\bm \pi} \left( \sum_{k=1}^{\infty} {\bm \varepsilon}_{k} \in {\mathrm d} z \right) \, ; \quad z \in [0, \infty) \, .  
\end{equation} 
\item\label{thm4.2} 
The density function ${\nu}_i$ of the measure $\boldnu_i$ is proportional to an infinite convolution of exponential densities with parameters $k(k+\mu_i)(\lambda_1+\lambda_2)$, for $k\in\mathbb Z\setminus \{-1,0,1\}$ for $i =1, 2$.
\end{enumerate}
\end{theorem}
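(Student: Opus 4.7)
The plan is to derive both parts from the closed-form Laplace transforms in Theorem~\ref{thm:explicit_general} and Corollary~\ref{thm:explicit_symmetric} by appealing to the Weierstrass factorizations of $\cos$ and $\sin$. Since $\prod_{k}\ell_k/(\ell_k+y)$ is precisely the Laplace transform of the convolution of exponential densities with rates $\ell_k$, uniqueness of the Laplace transform converts a product representation of $\hatnu_i(y)$ (respectively of $\hatpi(y,y)$) into the stated convolution structure.

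For part~\ref{thm4.2}, I would work with~\eqref{eq:expression_hatnu_1_nonsym} and factor the denominator $\cos(\pi f(y))-\cos(\pi\mu_1)$, where $f(y):=\sqrt{\mu_1^2-4y/(\lambda_1+\lambda_2)}$. Using the sum-to-product identity $\cos A-\cos B=-2\sin(\frac{A+B}{2})\sin(\frac{A-B}{2})$ followed by the Euler sine product $\sin(\pi z)=\pi z\prod_{k\geq 1}(1-z^2/k^2)$, one expands the two sines into infinite products and combines them termwise. The $k$-th combined factor then involves $(1-a^2/k^2)(1-b^2/k^2)$ with $a=(f+\mu_1)/2$, $b=(f-\mu_1)/2$; substituting $u:=f^2-\mu_1^2=-4y/(\lambda_1+\lambda_2)$ and completing the square in $u$ converts this factor to
\begin{equation*}
\frac{\bigl(y+k(k+\mu_1)(\lambda_1+\lambda_2)\bigr)\bigl(y+k(k-\mu_1)(\lambda_1+\lambda_2)\bigr)}{k^{4}(\lambda_1+\lambda_2)^{2}},
\end{equation*}
which is the key algebraic identity. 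Plugging back into~\eqref{eq:expression_hatnu_1_nonsym} and observing that $(1+\mu_1)(\lambda_1+\lambda_2)=2\lambda_1+\lambda_2$ and $(1-\mu_1)(\lambda_1+\lambda_2)=\lambda_2$, the polynomial numerator $y(y+\lambda_2)(y+2\lambda_1+\lambda_2)$ of $\hatnu_1$ cancels exactly against the $k=1$ factor together with the leading $y$ issuing from $\cos(\pi f)-\cos(\pi\mu_1)$. Rescaling what remains for $k\geq 2$ produces the Laplace transforms of $\mathrm{Exp}\bigl(k(k\pm\mu_1)(\lambda_1+\lambda_2)\bigr)$, each scaled by $k^2/(k^2-\mu_1^2)$; by Euler's product $\prod_{k\geq 1}(1-\mu_1^2/k^2)=\sin(\pi\mu_1)/(\pi\mu_1)$, these constants combine with the prefactor $4\pi\sin(\pi\mu_1)/(3\lambda_1\lambda_2)$ to yield a finite positive constant (which turns out to equal $2(2\lambda_1+\lambda_2)/3$). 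Finally, the rewriting $k(k-\mu_1)=(-k)(-k+\mu_1)$ identifies the set $\{k(k\pm\mu_1)(\lambda_1+\lambda_2):k\geq 2\}$ with $\{k(k+\mu_1)(\lambda_1+\lambda_2):k\in\mathbb{Z}\setminus\{-1,0,1\}\}$, completing~\ref{thm4.2}.

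For part~\ref{thm4.1}, I would first specialize the BAR~\eqref{eq:funceq} to $x=y$ in the symmetric case. The quadratic prefactor $(x-y)^2+\lambda_1 x+\lambda_2 y$ collapses to $2\lambda y$, and by the $\lambda_1\leftrightarrow\lambda_2$ symmetry $\hatnu_1=\hatnu_2$ the right-hand side becomes $y\hatnu_1(y)$, yielding
\begin{equation*}
\mathbb{E}^{\boldpi}\bigl[e^{-y(G+H)}\bigr]=\hatpi(y,y)=\frac{\hatnu_1(y)}{2\lambda}.
\end{equation*}
Applying part~\ref{thm4.2} with $\mu_1=\frac{1}{2}$ and $\lambda_1+\lambda_2=2\lambda$, the rates $k(k\pm\frac{1}{2})\cdot 2\lambda = k(2k\pm 1)\lambda$ for $k\geq 2$ enumerate exactly the sequence $\ell_k=\frac{\lambda}{2}(k+2)(k+3)$ for $k\in\mathbb{N}$, and the normalization constant $2(2\lambda+\lambda)/(3\cdot 2\lambda)=1$ gives $\hatpi(y,y)=\prod_{k\geq 1}\ell_k/(\ell_k+y)$ (as is consistent with both sides equaling $1$ at $y=0$). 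Uniqueness of the Laplace transform then identifies $\boldsymbol{\sigma}$ with the density of $\sum_{k\geq 1}\boldsymbol{\varepsilon}_k$.

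The main obstacle is the central algebraic step in~\ref{thm4.2}: proving that the two Weierstrass products combine, after completing the square in $f^2-\mu_1^2$, into the clean bilinear form displayed above, and verifying that all accumulated multiplicative constants conspire via the Euler sine product to produce precisely the prefactor $2(2\lambda_1+\lambda_2)/3$. Convergence of the infinite convolutions is routine, since the asymptotic growth $\ell_k\sim k^2$ guarantees $\sum 1/\ell_k<\infty$ in both parts.
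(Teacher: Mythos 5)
Your proposal is correct, and the verification goes through as you describe. The central algebraic identity is indeed as you state: with $a=(f+\mu_1)/2$, $b=(f-\mu_1)/2$, one has $a^2+b^2=(f^2+\mu_1^2)/2$, $a^2b^2=y^2/(\lambda_1+\lambda_2)^2$, and
\begin{equation*}
k^4\Bigl(1-\frac{a^2}{k^2}\Bigr)\Bigl(1-\frac{b^2}{k^2}\Bigr)
= k^4 - k^2\mu_1^2 + \frac{2k^2 y}{\lambda_1+\lambda_2} + \frac{y^2}{(\lambda_1+\lambda_2)^2}
= \frac{\bigl(y+k(k+\mu_1)(\lambda_1+\lambda_2)\bigr)\bigl(y+k(k-\mu_1)(\lambda_1+\lambda_2)\bigr)}{(\lambda_1+\lambda_2)^2}\,,
\end{equation*}
and the constant bookkeeping via $\prod_{k\ge 2}k^2/(k^2-\mu_1^2)=\pi\mu_1(1-\mu_1^2)/\sin(\pi\mu_1)$ together with $\mu_1(1-\mu_1^2)=\lambda_1\lambda_2(2\lambda_1+\lambda_2)/(\lambda_1+\lambda_2)^3$ does give the prefactor $\frac{2}{3}(2\lambda_1+\lambda_2)=\hatnu_1(0)$.

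Where you differ from the paper is mainly organizational. The paper proves part~\ref{thm4.1} first, directly from the symmetric-case formula~\eqref{eq:expression_hatnu_1_sym}, by expanding $\cos\bigl(\tfrac{\pi}{2}\sqrt{1-8y/\lambda}\bigr)$ using the ordinary single $\cosh$ product~\eqref{eq:cos_infinite_product_formula}, cancelling the $k=1,2,3$ factors against $y(y+\lambda)(y+3\lambda)$, and then reindexing the remaining $k\ge 4$ factors as $\ell_{k-3}/(y+\ell_{k-3})$ after collecting a constant $\prod_{k\ge 4}(1+1/(4k(k-1)))=256/(75\pi)$. It then proves~\ref{thm4.2} separately by invoking the Weierstrass factorization of $\cos z-\cos z_0$ as a bi-infinite product over $k\in\mathbb Z$, obtaining $\prod_{k\in\mathbb Z}\frac{y+k(k+\mu_1)}{(k+\mu_1/2)^2}$ directly; your sum-to-product plus two Euler sine products is a slightly different but equivalent route to the same bi-infinite product, pairing $k$ with $-k$. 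Your decision to prove~\ref{thm4.2} first and deduce~\ref{thm4.1} by specializing $\mu_1=\tfrac12$, $\lambda_1+\lambda_2=2\lambda$ is more economical and avoids redoing the $k\le 3$ cancellation bookkeeping for the symmetric case; the price is only that you must separately verify the merged enumeration $\{k(2k\pm1)\lambda:k\ge2\}=\{\ell_m:m\ge1\}$, which you do correctly. Both derivations of $\hatpi(y,y)=\hatnu_1(y)/(2\lambda)$ from the BAR are the same. No gaps.
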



\noindent {\bf Preview:} Our paper is organized as follows. In Section~\ref{sec:bvp}, we introduce various tools and state some preliminary results. We use here an analytical method inspired from \cite{fayolle_random_2017}, initially developed for studying discrete random walks in the quadrant. This method has been recently useful for finding an explicit expression for the Laplace transform of the invariant measure of (non-degenerate)\ reflected Browian motion in the quadrant, see \cite{franceschi_explicit_2017}.  In Section~\ref{sec:Tutte}, we use the invariant approach, developed by William Tutte in the 90's to enumerate colored triangulations
\cite{tutte_chromatic_1995} and applied recently to (non-degenerate)\ reflected Brownian motion in a quadrant in~\cite{franceschi_tuttes_2016,BoMe-El-Fr-Ha-Ra}; this approach allows us to solve a boundary value problem (BVP)\ stated in Section~\ref{sec:bvp}, and thus prove Theorem~\ref{thm:explicit_general} and Corollary~\ref{thm:explicit_symmetric}.
In Section~\ref{sec:boundary_density} we establish Theorem~\ref{cor:boundary_density_non-sym} and Corollary~\ref{cor:boundary_density_sym}, using computations based on the Jacobi-type function $\theta_\mu$ introduced in \eqref{eq:def_theta_mu}.
Theorem~\ref{thm:sum_exp_sum} (together with various extensions)\ is proved in Section~\ref{sec:sum_exp_sum}.
Finally, in Section~\ref{sec:compensation} we prove Theorem~\ref{thm:maindensity} and Corollary~\ref{thm:formula_density_original_question}, using the compensation approach.

\section{Preliminary analytical results}
\label{sec:bvp}

\subsection{Study of the kernel}

We recall the condition \eqref{eq:conddelta} 
and denote by $K$ the kernel
\begin{equation}
\label{def:kernel}
   K(x,y):=(x-y)^2+2(\delta_2-\delta_1)x+2(\delta_3-\delta_2)y
   =(x-y)^2+\lambda_1 x+\lambda_2 y,
\end{equation}
which appears on the left-hand side of the functional equation~\eqref{eq:funceq}. The set of real zeros of the kernel, namely
\begin{equation}
\label{eq:def_parabola_P}
   \mathcal{P}:=\{(x,y)\in\mathbb{R}^2:K(x,y)=0\},
\end{equation}
turns out to be a parabola, see Figure~\ref{fig:parabola}. The straight lines 
\begin{equation*}
2x-y=0
\quad\text{and}\quad
2y-x=0,
\end{equation*}
which appear on the right-hand side of the functional equation~\eqref{eq:funceq},
are also represented on Figure~\ref{fig:parabola}.
\begin{figure}[hbtp]
\centering
\includegraphics[scale=0.55]{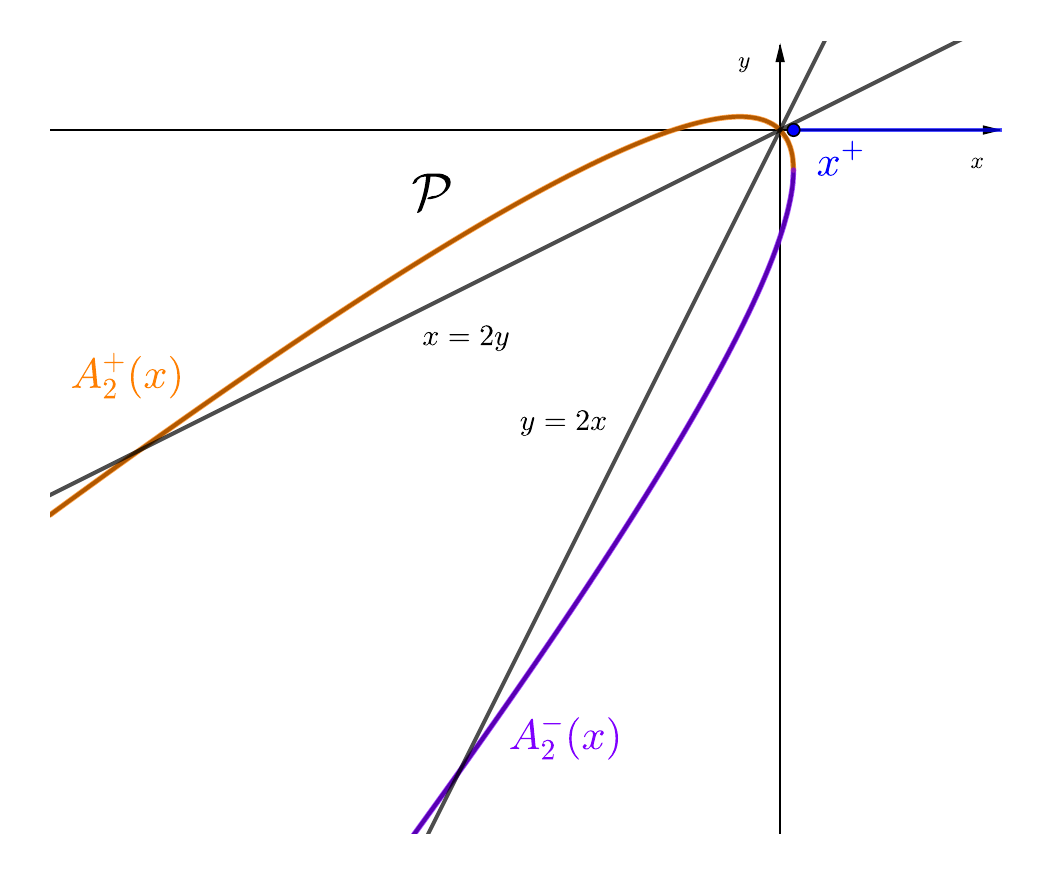}
\caption{The parabola $\mathcal{P}$ and its two branches $A_2^+$ and $A_2^-$, together with the lines of equation $2y-x=0$ and $2x-y=0$.}
\label{fig:parabola}
\end{figure}

We define a bivalued function $A_2$ with two branches
\begin{equation}
\label{A_2}
A_2^\pm(x):=-\frac{\lambda_2}{2}+x\pm\sqrt{\frac{\lambda_2^2}{4}-(\lambda_1+\lambda_2)x}
\end{equation}
which satisfy $K\bigl(x,A_2^\pm(x)\bigr)=0$. Similarly, the functions 
\begin{equation}
\label{A_1}
A_1^\pm(y):=-\frac{\lambda_1}{2}+y\pm\sqrt{\frac{\lambda_1^2}{4}-(\lambda_1+\lambda_2)y}
\end{equation}
satisfy $K\bigl(A_1^\pm(y),y\bigr)=0$.
The branch points of these functions, namely
\begin{equation*}
x^+:=\frac{\lambda_2^2}{4(\lambda_1+\lambda_2)}
\quad\text{and}\quad
y^+:=\frac{\lambda_1^2}{4(\lambda_1+\lambda_2)},
\end{equation*}
cancel the monomials under the square roots in \eqref{A_2} and \eqref{A_1}. 

In the previous definitions, we use the principal determination of the square root. Then, the functions $A_2^+$ and $A_2^-$ (resp.\ $A_1^+$ and $A_1^-$) are defined and analytic on the slit complex plane $\mathbb{C}\setminus [x^+,\infty)$ (resp.\ $\mathbb{C}\setminus [y^+,\infty)$). These branches admit limits on both sides of their respective cut and are complex conjugates on their cut. With a slight abuse of notation, for $y\in [y^+,\infty)$ we shall write $A_1^+(y)=\overline{A_1^-(y)}$; similarly, for $x\in [x^+,\infty)$, we have $A_2^+(x)=\overline{A_2^-(x)}$.

\subsection{Analytic continuation}

In Proposition~\ref{prop:BVP_hatnu_1} below, we will show that the function $\hatnu_1$ of \eqref{eq:def:laplnu1} (and $\hatnu_2$ of \eqref{eq:def:laplnu2})\ satisfies a certain boundary value problem, which will eventually allow us to compute this function explicitly. To that end, we shall need some preliminary results, which we now develop. 

First, we extend $\hatnu_1$ meromorphically from its initial domain of definition (the half-plane with non-negative real parts)\ to a larger domain, 
using the functional equation~\eqref{eq:funceq} together with a standard analytic continuation procedure inspired from~\cite{fayolle_random_2017,franceschi_explicit_2017,BoMe-El-Fr-Ha-Ra}.
\begin{lemma}[Analytic continuation]
\label{lem:analytic_continuation}
The Laplace transform $\hatnu_1$ can be continued analytically  to the open connected set
\begin{equation}
\label{eq:def_S}
S:=\{
y\in \mathbb{C}: 
\Re y\geqslant 0
\text{ or }
\Re A_1^+(y)>0
\}.
\end{equation}
\end{lemma}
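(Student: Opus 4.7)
The plan is to apply the classical kernel-cancellation argument from \cite{fayolle_random_2017}, adapted to reflected Brownian motion in \cite{franceschi_explicit_2017,BoMe-El-Fr-Ha-Ra}. Since the parabola $\mathcal{P}$ is precisely the zero set of the kernel $K(x,y)$, substituting $x = A_1^+(y)$ into the functional equation \eqref{eq:funceq} annihilates the prefactor of $\hatpi$ on the left-hand side, leaving a linear relation between $\hatnu_1(y)$ and $\hatnu_2(A_1^+(y))$. Because $\hatnu_2$ is analytic on $\{\Re x \geqslant 0\}$, this relation provides an analytic formula for $\hatnu_1(y)$ on the new region $\{\Re A_1^+(y) > 0\}$.

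Concretely, I would proceed as follows. First, exhibit a nonempty open overlap region $\Omega \subset \{y \in \mathbb{C}: \Re y \geqslant 0,\ \Re A_1^+(y) \geqslant 0\}$; the asymptotics $A_1^+(y) \sim y$ as $|y|\to\infty$ show that $\Omega$ contains every $y$ in the right half-plane outside a bounded set. Second, on $\Omega$, evaluate \eqref{eq:funceq} at $x = A_1^+(y)$: both $\hatpi$ and the two boundary Laplace transforms are analytic on the closed product of right half-planes, and the left-hand side vanishes since $K(A_1^+(y), y) = 0$, yielding
\[
\left(A_1^+(y) - \tfrac{y}{2}\right)\hatnu_1(y) + \left(y - \tfrac{A_1^+(y)}{2}\right)\hatnu_2(A_1^+(y)) = 0.
\]
Third, solve for $\hatnu_1(y)$ and use the right-hand side to extend $\hatnu_1$ to every $y$ at which $A_1^+$ is analytic and $\Re A_1^+(y) > 0$. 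The branch cut $[y^+, \infty)$ of $A_1^+$ lies inside the initial domain $\{\Re y \geqslant 0\}$, where $\hatnu_1$ is already defined by its Laplace representation, so both pieces glue consistently into a single-valued function on the union $S$. To conclude, one checks that $S$ is open and connected by elementary topology using the continuity of $A_1^+$ off its cut.

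The principal obstacle is producing a sufficiently rich overlap region $\Omega$ for the identity theorem to propagate the relation, together with careful bookkeeping of which side of the cut $[y^+, \infty)$ the branch $A_1^+$ sits on so that the two formulas for $\hatnu_1$ are compatible across the cut. A secondary concern is the denominator $A_1^+(y) - y/2$: solving $A_1^+(y) = y/2$ explicitly yields $y \in \{0,\ -2\lambda_1 - 4\lambda_2\}$, neither of which lies in the interior of $S$, so no spurious singularities are introduced and the continuation is genuinely analytic on $S$.
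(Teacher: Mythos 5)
Your argument is essentially identical to the paper's: you evaluate the functional equation~\eqref{eq:funceq} at $(A_1^+(y), y)$ to annihilate the kernel, solve for $\hatnu_1$, and extend to $\{\Re A_1^+(y) > 0\}$, checking that the denominator $A_1^+(y) - \tfrac{y}{2}$ does not vanish there. As a minor point, your value $y = -2\lambda_1 - 4\lambda_2$ for the nonzero root of $A_1^+(y) = \tfrac{y}{2}$ is in fact the correct one---the paper's printed value $-\lambda_1 - 2\lambda_2$ is actually $A_1^+$ evaluated at that root, not the root itself---though both roots are real and nonpositive either way, so the conclusion is unaffected.
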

\begin{proof}
Notice that similar continuation results have been proved in \cite{franceschi_explicit_2017,BoMe-El-Fr-Ha-Ra}, see in particular \cite[Lem.~3]{franceschi_explicit_2017} and the proof of \cite[Prop.~4.1]{BoMe-El-Fr-Ha-Ra}. We briefly recall here the main details.

Initially, $\hatnu_1$ is defined on the set $\{y\in\mathbb{C}: \Re y \geqslant 0 \}$ and analytic on the interior of this set. Similarly, the Laplace transform $\hatnu_2$ is defined on the set $\{x\in\mathbb{C}: \Re x \geqslant 0 \}$ and is analytic on its interior.
Let us take $y$ in the domain
\begin{equation*}
\{
y\in \mathbb{C}: 
\Re A_1^+(y)>0
\} \cap \{
y\in \mathbb{C}: 
\Re y>0
\}.
\end{equation*}
Observe that this intersection is non-empty (for example, take $y$ real and large enough), see Figure~\ref{fig:domain}. We then evaluate the functional equation \eqref{eq:funceq} at the point $(A_1^+(y),y)$, where the kernel vanishes. We thus obtain the formula
\begin{equation}
\label{eq:continuationformula}
   \hatnu_1(y)=-\frac{2y-{A_1^+(y)}}{2A_1^+(y)-y}\hatnu_2\bigl(A_1^+(y)\bigr),
\end{equation}
which allows us to continue $\hatnu_1$ analytically to the set $\{
y\in \mathbb{C}: 
\Re A_1^+(y)>0
\}$. Indeed, the denominator $A_1^+(y)-\frac{y}{2}$ cannot vanish on this set (since the equation $A_1^+(y)-\frac{y}{2}=0$ has only two solutions, $y=0$ and 
$y=-\lambda_1-2\lambda_2<0$ 
by \eqref{eq:conddelta}). We immediately deduce that $\hatnu_1$ is analytic on the set $S$ in \eqref{eq:def_S}.
\end{proof}

\subsection{An important parabola}

We now introduce a new parabola, namely $\mathcal{P}_2$, which will be used to formulate the BVP in Proposition~\ref{prop:BVP_hatnu_1}:
\begin{equation}
\label{eq:def_P_2}
   \mathcal{P}_2:= A_2^\pm\bigl([x^+,\infty)\bigr)=\bigl\{y\in\mathbb{C}: K(x,y)=0 
\text{ for some }
x\in [x^+,\infty)
 \bigr\},
\end{equation}
see Figure~\ref{fig:domain}.
\begin{lemma}[Parabola $\mathcal{P}_2$]
The curve $\mathcal{P}_2$ in \eqref{eq:def_P_2} is a parabola described by the equation
\begin{equation}
\label{eq:P2}
   \bigl\{y \in \mathbb{C} : (\Im y)^2
   =(\lambda_1+\lambda_2)(\Re y)+\frac{1}{4}\lambda_2(2\lambda_1+\lambda_2)\bigr\}.
\end{equation}
\end{lemma}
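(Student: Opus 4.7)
The identity to prove is a direct computation once one observes that on the branch cut $[x^+,\infty)$ the radicand in~\eqref{A_2} becomes non-positive, so the square root is purely imaginary. My plan is thus just to compute explicitly the real and imaginary parts of $A_2^\pm(x)$ and then eliminate the parameter $x$.

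First I would write, for $x\in[x^+,\infty)$,
\begin{equation*}
   A_2^\pm(x) \;=\; \Bigl(x - \tfrac{\lambda_2}{2}\Bigr) \;\pm\; i\sqrt{(\lambda_1+\lambda_2)x - \tfrac{\lambda_2^2}{4}}\,,
\end{equation*}
which is legitimate because the expression under the square root in~\eqref{A_2} equals $\frac{\lambda_2^2}{4}-(\lambda_1+\lambda_2)x \leqslant 0$ precisely when $x\geqslant x^+=\frac{\lambda_2^2}{4(\lambda_1+\lambda_2)}$. Setting $y=A_2^\pm(x)$, I read off $\Re y = x - \frac{\lambda_2}{2}$ and $(\Im y)^2 = (\lambda_1+\lambda_2)x - \frac{\lambda_2^2}{4}$, and substituting $x = \Re y + \frac{\lambda_2}{2}$ yields
\begin{equation*}
    (\Im y)^2 \;=\; (\lambda_1+\lambda_2)\Re y \;+\; \frac{\lambda_2(\lambda_1+\lambda_2)}{2} \;-\; \frac{\lambda_2^2}{4}
    \;=\; (\lambda_1+\lambda_2)\Re y \;+\; \frac{1}{4}\lambda_2(2\lambda_1+\lambda_2),
\end{equation*}
which is exactly~\eqref{eq:P2}. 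This shows $\mathcal{P}_2$ is contained in the stated parabola.

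To complete the argument I would verify the other inclusion, i.e.\ that as $x$ sweeps over $[x^+,\infty)$ the pair of branches $A_2^\pm(x)$ together traces out the whole parabola. Since $\Re y = x - \frac{\lambda_2}{2}$ is an affine bijection from $[x^+,\infty)$ onto $\bigl[-\frac{\lambda_2(2\lambda_1+\lambda_2)}{4(\lambda_1+\lambda_2)},\infty\bigr)$, and since the latter interval is precisely the set of real parts allowed by the equation~\eqref{eq:P2} (the right-hand side vanishes at its left endpoint, where indeed $\Im y=0$), both signs $\pm$ together cover every value of $\Im y$ compatible with a given admissible $\Re y$. Hence the two sets coincide. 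The calculation is entirely routine; no real obstacle is expected.
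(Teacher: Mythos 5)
Your proposal is correct and follows essentially the same route as the paper: the paper expresses $A_2^+(x)+A_2^-(x)$ and $A_2^+(x)A_2^-(x)$ in terms of $\Re y$ and $\Im y$ and eliminates $x$, which is the same computation you perform by reading off $\Re y$ and $(\Im y)^2$ directly from the explicit formula; your extra paragraph verifying that the full parabola is swept out makes explicit a point the paper leaves implicit, but adds no new idea.
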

\begin{proof}
On the cut $[x^+,\infty)$, the quantities $A_2^\pm(x)$ take complex conjugate values, denoted by $\Re y \pm i \Im y$. By \eqref{A_2}, they satisfy
\begin{equation*}
   \left\{\begin{array}{rcccl}\displaystyle 
A_2^+(x) + A_2^-(x) &=& 2\Re y &=& -\lambda_2+2x,\smallskip
\\
\displaystyle A_2^+(x) A_2^-(x) &=& (\Re y)^2+(\Im y)^2& =& (-\frac{\lambda_2}{2}+x)^2+(\lambda_1+\lambda_2)x-\frac{\lambda_2^2}{4}.
   \end{array}\right.
\end{equation*}
Eliminating $x$ from these two equations readily gives \eqref{eq:P2}.
\end{proof}

\begin{figure}[hbtp]
\centering
\includegraphics[scale=0.7]{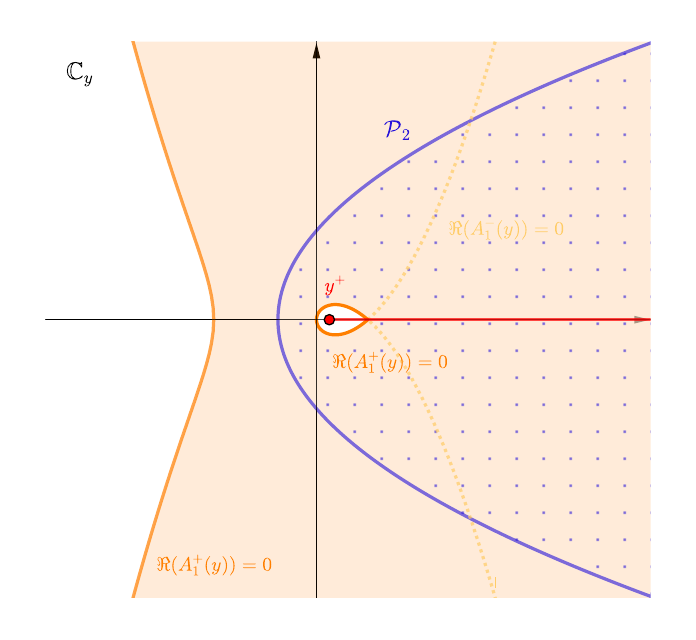}
\caption{The $y$-complex plane; the parabola $\mathcal{P}_2$ is drawn in blue; the domain $\mathcal{D}_2$ inside the parabola is represented by the blue dotted area; the domain $\{
y\in \mathbb{C}: 
\Re A_1^+(y)>0
\}$ is orange and is bounded by the curves of equation $\Re A_1^+(y)=0$; the curve $\Re A_1^-(y)=0$ is also represented by the orange dotted curve.}
\label{fig:domain}
\end{figure}

We denote $\mathcal{D}_2$ the domain inside the parabola $\mathcal{P}_2$, see Figure~\ref{fig:domain}; it is defined by
\begin{equation}
\label{eq:defD2}
   \mathcal{D}_2:= \bigl\{y\in\mathbb{C} : (\Im y)^2<(\lambda_1+\lambda_2)(\Re y)+\frac{1}{4}\lambda_2(2\lambda_1+\lambda_2) \bigr\}.
\end{equation}

\begin{lemma}[Analyticity inside the parabola $\mathcal{P}_2$]
\label{lem:pro2}
The domain $\mathcal{D}_2$ is included in the open, connected set $S$ of~\eqref{eq:def_S}.
As a consequence of Lemma~\ref{lem:analytic_continuation}, the Laplace transform $\hatnu_1$ is analytic in $\mathcal{D}_2$.
\end{lemma}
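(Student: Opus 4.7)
The plan is to reduce the lemma to proving the set inclusion $\mathcal{D}_2 \subseteq S$; the analyticity of $\hatnu_1$ on $\mathcal{D}_2$ will then follow directly from Lemma~\ref{lem:analytic_continuation}. The set $S$ being the union of the closed right half-plane and the open set $\{\Re A_1^+(y)>0\}$, I would split $\mathcal{D}_2$ into two pieces according to the sign of $\Re y$. The half $\mathcal{D}_2 \cap \{\Re y \geq 0\}$ is trivially contained in $S$. For the remaining half $\mathcal{D}_2^- := \mathcal{D}_2 \cap \{\Re y < 0\}$, everything will come down to establishing that $\Re A_1^+(y) > 0$ there.

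The key preliminary observation is that $A_1^+$ is genuinely analytic on $\mathcal{D}_2^-$: its cut $[y^+,\infty)$ (with $y^+=\lambda_1^2/(4(\lambda_1+\lambda_2))>0$) lies in the right half-plane, hence is disjoint from $\mathcal{D}_2^-$. The region $\mathcal{D}_2^-$ is also open and connected, being bounded by the arc of $\mathcal{P}_2$ with $\Re y < 0$ on one side and by a segment of the imaginary axis on the other. By continuity on this connected set, it will be enough to show two things: that $\Re A_1^+$ vanishes nowhere on $\mathcal{D}_2^-$, and that it is positive at some convenient point.

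For the non-vanishing part, I would set $y=u+iv$ and write the principal branch of $\sqrt{\tfrac{\lambda_1^2}{4}-(\lambda_1+\lambda_2)y}$ as $\alpha - i\,\mathrm{sgn}(v)\beta$ with explicit $\alpha,\beta \geq 0$. The equation $\Re A_1^+(y) = u - \lambda_1/2 + \alpha = 0$ forces $\alpha = \lambda_1/2 - u \geq 0$, and squaring then eliminating using $\alpha^2-\beta^2 = \Re\bigl(\lambda_1^2/4 - (\lambda_1+\lambda_2)y\bigr)$ leads (after routine algebra) to $\beta^2 = u^2 + \lambda_2 u$. Since $\beta^2 \geq 0$ requires $u \in [0,\lambda_1/2]$ or $u \leq -\lambda_2$, the locus $\{\Re A_1^+ = 0\}$ splits into two components. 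The right component lies in $\{\Re y \geq 0\}$ and is irrelevant here. The left component lies in $\{u \leq -\lambda_2\}$, and I would check that $\mathcal{D}_2 \subset \{u > -\lambda_2\}$: indeed, the vertex of $\mathcal{P}_2$ has abscissa $-\lambda_2(2\lambda_1+\lambda_2)/(4(\lambda_1+\lambda_2))$, and the elementary inequality $\lambda_2(2\lambda_1+\lambda_2) < 4\lambda_2(\lambda_1+\lambda_2)$ shows this abscissa is strictly greater than $-\lambda_2$. Consequently, $\{\Re A_1^+ = 0\} \cap \mathcal{D}_2^- = \emptyset$.

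For the sign at one point, I would take $y=-\varepsilon$ with $\varepsilon>0$ small, so that $y \in \mathcal{D}_2^-$, and Taylor-expand: $A_1^+(-\varepsilon) = -\lambda_1/2 - \varepsilon + \sqrt{\lambda_1^2/4 + (\lambda_1+\lambda_2)\varepsilon} = (\lambda_2/\lambda_1)\varepsilon + O(\varepsilon^2) > 0$. Combining with the non-vanishing statement and the connectedness of $\mathcal{D}_2^-$, the continuous function $\Re A_1^+$ is positive throughout $\mathcal{D}_2^-$, concluding $\mathcal{D}_2^- \subset S$ and hence $\mathcal{D}_2 \subset S$. The only mildly delicate step is the explicit description of $\{\Re A_1^+ = 0\}$, which needs careful bookkeeping with the principal branch of the square root; all other steps are either direct inequality checks or a connectedness argument.
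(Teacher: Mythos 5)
Your proof is correct, but it takes a genuinely different route from the paper. After the common trivial step that $\mathcal{D}_2 \cap \{\Re y \geq 0\} \subset S$, the paper treats the remaining piece via the maximum principle: it shows $\Re A_1^+ \geq 0$ on the boundary of the bounded domain $\mathcal{D}_2 \cap \{\Re y \leq 0\}$ (on $\mathcal{P}_2$ because $A_1^+(y) = x \in [x^+,\infty)$ there; on the imaginary axis by inspection of~\eqref{A_1}), and then invokes the maximum principle for the harmonic function $\Re A_1^+$ to get strict positivity in the interior. You instead parametrize the zero locus $\{\Re A_1^+ = 0\}$ explicitly via the principal branch of the square root, showing it lives in $\{u \geq 0\} \cup \{u \leq -\lambda_2\}$ and therefore misses $\mathcal{D}_2^- \subset \{-\lambda_2 < u < 0\}$, and then conclude by continuity on the convex (hence connected) set $\mathcal{D}_2^-$ together with a sign check at a single interior point $y=-\varepsilon$. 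Your approach avoids the maximum principle at the cost of a branch-of-square-root computation, and it actually gives finer information (the exact zero set of $\Re A_1^+$); the paper's argument is shorter but relies on the harmonicity of $\Re A_1^+$ and on verifying the boundary inequality on two distinct arcs. Both are valid, and your bookkeeping of the square-root branch and the bound on the vertex abscissa of $\mathcal{P}_2$ checks out.
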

The inclusion $\mathcal{D}_2\subset S$ can be visualized on Figure~\ref{fig:domain}.

\begin{proof} By the definition of $S$ in \eqref{eq:def_S}, it is obvious that $\mathcal{D}_2 \cap \{y\in\mathbb{C}:\Re y\geqslant 0 \} \subset S$. It remains to show that
\begin{equation*}
   \mathcal{D}_2 \cap \{y\in\mathbb{C}:\Re y < 0 \} \subset \{y\in\mathbb{C}: \Re A_1^+(y)>0 \} \subset S.
\end{equation*}
First, observe that $\mathcal{P}_2 \subset \{y\in\mathbb{C}: \Re A_1^+(y)>0 \}$. Indeed, if $y\in\mathcal{P}_2$, by definition \eqref{eq:def_P_2} there exists $x\in[x^+,\infty)$ such that $K(x,y)=0$ and then $x=A_1^+(y)>0$. For $y$ such that $\Re y=0$, we also have $\Re A_1^+(y)\geqslant 0$ by \eqref{A_1}. Then, $\mathcal{D}_2 \cap \{y\in\mathbb{C}:\Re y\leqslant 0 \}$ is a bounded domain, and for $y$ on its boundary we have $\Re A_1^+(y)\geqslant 0$. The maximum principle applied to $\Re A_1^+$ implies $\Re A_1^+(y)>0 $ for all $y$ in this (open)\ domain. The proof is complete.
\end{proof}

\subsection{Carleman boundary value problem}

Define
\begin{equation}
 \label{eq:defG}
   G(y):=\frac{2A_1^+(y)-y}{2y-A_1^+(y)}\cdot \frac{2\overline{y}-A_1^+(y)}{2A_1^+(y)-\overline{y}}.
\end{equation}
\begin{proposition}[Carleman BVP]
\label{prop:BVP_hatnu_1}
The Laplace transform $\hatnu_1$ satisfies the following Carleman boundary value problem:
\begin{itemize}
   \item $\hatnu_1$ is analytic on the region $\mathcal{D}_2$ of \eqref{eq:defD2};
   \item $\hatnu_1$ satisfies the boundary condition
\begin{equation} 
\label{eq:bvp}
\hatnu_1(\overline{y})=
G(y)\hatnu_1(y), \qquad \forall \ y\in\mathcal{P}_2.
\end{equation}
\end{itemize}
\end{proposition}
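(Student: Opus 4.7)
The first bullet is immediate: Lemma~\ref{lem:pro2} already states that $\hatnu_1$ is analytic on $\mathcal{D}_2$. For the boundary relation, my plan is to evaluate the functional equation~\eqref{eq:funceq} at two kernel-zero points that share the same $x$-coordinate, and then eliminate the common $\hatnu_2$-value between the two resulting identities.

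Fix $y\in\mathcal{P}_2$. By the definition~\eqref{eq:def_P_2} there is a real $x\in[x^+,\infty)$ with $K(x,y)=0$; since $K(x,\cdot)$ has real coefficients (as $x$ is real), $K(x,\overline y)=0$ as well, and $y$ and $\overline y$ are the two roots of the quadratic $K(x,\cdot)$. A short continuity and Schwarz-reflection argument, starting from the unique real point $y^{\ast}=-\lambda_2(2\lambda_1+\lambda_2)/(4(\lambda_1+\lambda_2))$ of $\mathcal{P}_2$---where $A_1^+(y^{\ast})=x^+$ and $A_1^-(y^{\ast})<0$ are both real and distinct---identifies the common real root as $A_1^+(y)=A_1^+(\overline y)=x$. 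Applying the continuation formula~\eqref{eq:continuationformula} at $y$ and at $\overline y$, both of which satisfy $\Re A_1^+=x>0$ and thus lie in the domain $S$ of Lemma~\ref{lem:analytic_continuation}, I obtain
\begin{equation*}
   \hatnu_1(y)=-\frac{2y-x}{2x-y}\,\hatnu_2(x),\qquad \hatnu_1(\overline y)=-\frac{2\overline y-x}{2x-\overline y}\,\hatnu_2(x),
\end{equation*}
with $\hatnu_2(x)\in\mathbb{R}$ since $\boldnu_2$ is a real measure and $x\geqslant x^+>0$. Eliminating this common factor---or, to be safe, cross-multiplying---then gives
\begin{equation*}
   \hatnu_1(\overline y)=\frac{(2\overline y-x)(2x-y)}{(2x-\overline y)(2y-x)}\,\hatnu_1(y)=G(y)\,\hatnu_1(y),
\end{equation*}
which is precisely the boundary condition~\eqref{eq:bvp}--\eqref{eq:defG}.

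The main obstacle I anticipate is not this core identity but the accompanying technical bookkeeping. The denominators $2A_1^+(y)-y$ and $2A_1^+(y)-\overline y$ must be shown not to vanish on $\mathcal{P}_2$: the zero set $y\in\{0,-\lambda_1-2\lambda_2\}$ of the first equation, already identified in the proof of Lemma~\ref{lem:analytic_continuation}, together with its complex conjugate set for the second, consists of real points that a direct check against~\eqref{eq:P2} rules out of $\mathcal{P}_2$. One must also confirm that $\mathcal{P}_2$ does not meet the branch cut $[y^+,\infty)$ of $A_1^+$, which is immediate since $\mathcal{P}_2\cap\mathbb{R}=\{y^{\ast}\}$ is the single negative point above. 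Neither point is substantive, but both need to be spelled out to make the derivation fully rigorous.
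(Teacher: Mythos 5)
Your proof is correct and takes essentially the same route as the paper: evaluate the functional equation~\eqref{eq:funceq} at the kernel-zero points $\bigl(A_1^+(y),y\bigr)$ and $\bigl(A_1^+(y),\overline y\bigr)$ for $y\in\mathcal{P}_2$, then eliminate the common factor $\hatnu_2\bigl(A_1^+(y)\bigr)$, with analyticity on $\mathcal{D}_2$ supplied by Lemma~\ref{lem:pro2}. The extra bookkeeping you include---identifying $A_1^+(y)=A_1^+(\overline y)=x$ by continuity from the vertex $y^\ast$, and verifying that the denominators $2A_1^+(y)-y$, $2A_1^+(y)-\overline y$ do not vanish on $\mathcal{P}_2$---only makes explicit details the paper leaves implicit.
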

The reader may refer to \cite[Sec.~5]{fayolle_random_2017} for a brief summary of the theory of (Carleman)\ boundary value problems; the term ``Carleman'' refers to the fact that a shift function (in our case  complex conjugation)\ is needed to state the BVP.
\begin{proof}
The first point has been seen in Lemma~\ref{lem:pro2}. The second point comes from the functional equation~\eqref{eq:funceq} evaluated at $(A_1^+(y),y)$ and $(A_1^+(y),\overline{y})$ for $y\in\mathcal{P}_2$. Noticing that $A_1^+(y)=A_1^+(\overline{y})\in [x^+,\infty)$ and that at these points the kernel vanishes, we obtain the two equations
\begin{equation*}
\begin{cases}
\bigl(2A_1^+(y)-y\bigr)\hatnu_1(y)+\bigl(2y-A_1^+(y)\bigr)\hatnu_2(A_1^+(y))=0,
\\
\bigl(2A_1^+(y)-\overline{y}\bigr)\hatnu_1(\overline{y})+\bigl(2\overline{y}-A_1^+(y)\bigr)\hatnu_2(A_1^+(y))=0.
\end{cases}
\end{equation*}
We already encountered the first of these equations in \eqref{eq:continuationformula}.
We eliminate now $\hatnu_2(A_1^+(y))$ from these two equations, and obtain the boundary condition~\eqref{eq:bvp}.
\end{proof}

\section{Tutte's invariant approach}
\label{sec:Tutte}

The goal of this section is to provide an explicit solution of the boundary value problem of Proposition~\ref{prop:BVP_hatnu_1} via \emph{Tutte's invariant approach}, which was developed in \cite{tutte_chromatic_1995} and has been applied recently to similar problems in non-degenerate settings; cf.\ the works \cite{BoMe-El-Fr-Ha-Ra,franceschi_tuttes_2016}. 

This method aims to find a \emph{decoupling} function $D$, which is \emph{analytic} (or even rational) in some domain, and such that the function $G$ of \eqref{eq:defG} can be expressed as
\begin{equation}
\label{eq:Gdecoupling}
   G(y)=\frac{D({y})}{D(\overline{y})},  \qquad \forall \ y\in\mathcal{P}_2.
\end{equation}
The above condition is equivalent to 
\begin{equation}
\label{eq:Gdecouplingequiv}
   \frac{\bigl(2x-A_2^-(x)\bigr)\bigl(2A_2^+(x)-x\bigr)}{\bigl(2x-A_2^+(x)\bigr)\bigl(2A_2^-(x)-x\bigr)}=\frac{D(A_2^+(x))}{D(A_2^-(x))}.
\end{equation} 
Such a decoupling function $D$ will be found in Section~\ref{sec:decoupling}. Then, Equation~\eqref{eq:bvp} may be rewritten as
\begin{equation*}
   D(y)\hatnu_1(y)=D(\overline{y})\hatnu_1(\overline{y}), \qquad \forall \ y\in\mathcal{P}_2,
\end{equation*}
which is equivalent to
\begin{equation*}
   D(A_2^+(x))\hatnu_1(A_2^+(x))=D(A_2^-(x))\hatnu_1(A_2^-(x)).
 \end{equation*}
The function $D\hatnu_1$ is then called an \emph{invariant}, because we have for it $(D\hatnu_1)(A_2^+(x))=(D\hatnu_1)(A_2^-(x))$. 

The goal of Tutte's invariant method is to express the \emph{unknown invariant} (here $D\hatnu_1$) in terms of a \emph{canonical invariant}. This is done 
in Section~\ref{sec:tuttegeneral} in the general case. The canonical invariant of our problem is denoted by $W$ and introduced in Section~\ref{sec:conformalgluing}; it happens to be a certain conformal gluing function. 

\subsection{Conformal gluing function}
\label{sec:conformalgluing}

To solve the Carleman boundary value problem of Proposition~\ref{prop:BVP_hatnu_1}, we need to introduce a \emph{canonical} conformal gluing function on the domain $\mathcal{D}_2$, which glues together the upper and the lower parts of the parabola $\mathcal{P}_2$. In the following lemma, the principal determinations of the square root $\sqrt{\cdot}$ and of the logarithm $\ln(\cdot)$ are considered on the slit plane $\mathbb{C}\setminus (-\infty,0]$.

\begin{lemma}[Conformal gluing function]
\label{lem:gluing} 
The function
\begin{equation}
\label{eq:W}
W(y)
=\cosh^2\left(\pi \sqrt{\frac{y}{\lambda_1+\lambda_2}-\frac{\lambda_1^2}{4(\lambda_1+\lambda_2)^2} } \right)
\end{equation}
and its inverse
\begin{equation*}
W^{-1}(z)=\frac{\lambda_1^2}{4(\lambda_1+\lambda_2)}+\frac{\lambda_1+\lambda_2}{\pi^2}\ln^2\left( \sqrt{z} - \sqrt{z-1} \right)
\end{equation*}
satisfy the following properties:
\begin{enumerate}
\item $W$ is conformal (i.e., $W$ is bijective, analytic, and $W^{-1}$ is also analytic) from $\mathcal{D}_2$ to the slit plane $\mathbb{C}\setminus(-\infty,0]$;
\item $W$ glues the parabola $\mathcal{P}$ of \eqref{eq:def_parabola_P} onto $(-\infty,0]$, i.e.,
\begin{equation*}
   W(y)=W(\overline{y})\in (-\infty,0],\qquad \forall \ y\in\mathcal{P},
\end{equation*}
and $W$ is $2$-to-$1$ from $\mathcal{P}\setminus \{A_2^\pm(x^+)\}$ to $(-\infty,0)$ (which means that $W^{-1}$ is bivalued on $(-\infty,0)$).
\end{enumerate}
\end{lemma}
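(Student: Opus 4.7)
The plan is to factor $W$ as a composition of three standard conformal maps and check each one. The affine substitution $\phi_1(y) := u = \frac{y}{\lambda_1+\lambda_2} - \frac{\lambda_1^2}{4(\lambda_1+\lambda_2)^2}$ sends the parabola $\mathcal{P}_2$ to the canonical parabola $\{(\Im u)^2 = \Re u + \tfrac{1}{4}\}$ (with vertex $u = -\tfrac{1}{4}$ corresponding to $y = A_2^{\pm}(x^+)$, as one verifies from \eqref{A_2}) and its interior $\mathcal{D}_2$ to $\mathcal{D}' := \{(\Im u)^2 < \Re u + \tfrac{1}{4}\}$. Let $\phi_2(u) := \cosh(\pi\sqrt u)$ and $\phi_3(z) := z^2$. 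Since only even powers of $\sqrt u$ appear in the Taylor expansion of $\cosh$, the map $\phi_2$ is entire in $u$ (despite the formal square root), and consequently $W = \phi_3 \circ \phi_2 \circ \phi_1$ is entire in $y$. The outer map $\phi_3$ is the classical conformal bijection $\mathbb{H}^+ \to \mathbb{C} \setminus (-\infty, 0]$ from the right half-plane to the slit plane.

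The first step is the gluing property. I would parametrize $\partial \mathcal{D}'$ by $u = t^2 - \tfrac{1}{4} + it$ for $t \in \mathbb{R}$; since $(t + i/2)^2 = u$ and $\cosh(\pi t + i\pi/2) = i\sinh(\pi t)$, one obtains $\phi_2(u) = i\sinh(\pi t)$, a bijection of $\partial\mathcal{D}'$ onto $i\mathbb{R}$. Squaring via $\phi_3$ yields $W = -\sinh^2(\pi t) \in (-\infty, 0]$, which is $2$-to-$1$ on $\partial\mathcal{D}' \setminus\{-\tfrac{1}{4}\}$ (via $t \leftrightarrow -t$), with the vertex mapping uniquely to $W = 0$. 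Transferring back via $\phi_1^{-1}$ gives property~(2) of the lemma.

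For conformality on $\mathcal{D}_2$, it suffices to show $\phi_2 \colon \mathcal{D}' \to \mathbb{H}^+$ is a conformal bijection. The algebraic identity $\Re u + \tfrac{1}{4} - (\Im u)^2 = (4a^2+1)(\tfrac{1}{4} - b^2)$ (with $w = a + ib$ and $u = w^2$) shows that $u \in \mathcal{D}'$ if and only if $|\Im \sqrt u| < \tfrac{1}{2}$ for either branch of the square root. Injectivity then follows easily: if $\cosh(\pi w_1) = \cosh(\pi w_2)$ with $w_i$ a square root of $u_i \in \mathcal{D}'$, then $w_1 = \pm w_2 + 2ik$ for some $k \in \mathbb{Z}$, and the bound $|\Im w_1| + |\Im w_2| < 1$ forces $k = 0$, hence $u_1 = u_2$. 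For surjectivity, observe that $\phi_2(0) = 1 \in \mathbb{H}^+$ and $\phi_2(\partial\mathcal{D}') = i\mathbb{R}$, so the open image $\phi_2(\mathcal{D}')$ is contained in $\mathbb{H}^+$; any omitted value would be approached along a sequence $u_n \in \mathcal{D}'$ escaping either to $\partial\mathcal{D}'$ (giving a limit on $i\mathbb{R}$) or to infinity (where $|\Re \sqrt{u_n}| \to \infty$ forces $|\cosh(\pi\sqrt{u_n})| \to \infty$), so no omission in $\mathbb{H}^+$ is possible.

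The inverse formula follows by inverting each factor: $\cosh(\pi\sqrt u) = \sqrt z$ combined with the identity $(\sqrt z - \sqrt{z-1})(\sqrt z + \sqrt{z-1}) = 1$ gives $e^{\pi\sqrt u} + e^{-\pi\sqrt u} = 2\sqrt z$, hence $\pi\sqrt u = \pm \ln(\sqrt z - \sqrt{z-1})$, which yields the stated expression for $W^{-1}$. The main remaining obstacle is the analyticity of $W^{-1}$ on $\mathbb{C}\setminus(-\infty, 0]$, specifically across the segment $(0, 1]$ where $\sqrt{z-1}$ naively has a branch cut. The resolution is that switching branches of $\sqrt{z-1}$ there replaces $\sqrt z - \sqrt{z-1}$ by its reciprocal (a point on the unit circle), which flips the sign of the logarithm but leaves $\ln^2$ invariant; hence the apparent singularity is removable and $W^{-1}$ extends analytically to all of $\mathbb{C} \setminus (-\infty, 0]$, completing the proof.
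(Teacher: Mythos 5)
Your proposal is correct and follows the same structural route as the paper: conjugate by the affine map $\phi_1$ to a canonical parabola, map the parabola interior to a half-plane via $u \mapsto \cosh(\pi\sqrt{u})$, and map the half-plane to the slit plane by squaring. The paper normalizes to the upper half-plane and composes with $z \mapsto i\sqrt{z}$; you normalize to the right half-plane and compose with $z \mapsto z^2$; the two are the same construction up to a rotation of the intermediate plane.

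The genuine added value in your write-up is that the paper simply cites the literature (Bieberbach; Kapodistria et al.) for the conformality of the parabola-to-half-plane map, whereas you verify it from scratch: injectivity from the periodicity of $\cosh$ plus the characterization $u \in \mathcal{D}' \Leftrightarrow |\Im\sqrt{u}| < \tfrac12$, surjectivity via a properness argument, and --- a nice point the paper never touches --- the removability of the apparent branch cut of $W^{-1}$ across $(0,1]$, because switching branches of $\sqrt{z-1}$ replaces $\sqrt z - \sqrt{z-1}$ by its reciprocal and $\ln^2$ is invariant under reciprocation. One small place worth tightening: the assertion that ``the open image $\phi_2(\mathcal{D}')$ is contained in $\mathbb{H}^+$'' is stated but not quite justified by the two facts you cite. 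The cleanest justification is direct: for $u = (a+ib)^2 \in \mathcal{D}'$ one has $|b| < \tfrac12$, hence
\begin{equation*}
\Re\,\cosh\bigl(\pi(a+ib)\bigr) = \cosh(\pi a)\cos(\pi b) > 0,
\end{equation*}
so $\phi_2(\mathcal{D}')$ lies in the open right half-plane, after which your escape-to-boundary-or-infinity argument finishes surjectivity. With that small patch the proof is complete and self-contained.
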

\begin{proof}
Conformal mappings associated with (interior domains of) parabolas are well known in the literature, see for example 
\cite[p.~113]{bieberbach2000conformal} or
\cite[Lem.~5.1]{kapodistria_saxena_boxma_kella_2023}. For $a>0$ define
\begin{equation*}
   z=w(y):=i\cosh\Bigl(\pi\sqrt{ay-\tfrac{1}{4}}\Bigr)
\end{equation*}
and its inverse function
\begin{equation*}
   y=w^{-1}(z):=\frac{1}{4a}+\frac{1}{a\pi^2}\ln^2\Bigl( -iz - \sqrt{-z^2-1} \Bigr).
\end{equation*}
The function $w$ above maps the interior of a parabola to the upper half-plane. More precisely, $w$ is conformal from
\begin{equation*}
   \bigl\{y\in\mathbb{C}: \Re y>a (\Im y)^2 \bigr\}\longrightarrow \bigl\{ z\in\mathbb{C}: \Im z >0 \bigr\}.
\end{equation*}
The equation of the parabola $\mathcal{P}_2$ is $ay^2=x+b$, in accordance with \eqref{eq:P2}, where $a$ and $b$ are chosen as 
\begin{equation*}
\label{eq:def_parameters_a_b}
   a
   :=\frac{1}{\lambda_1+\lambda_2}
\quad\text{and}\quad
b
:=\frac{\lambda_2(2\lambda_1+\lambda_2)}{4(\lambda_1+\lambda_2)}.
\end{equation*}
Then, noticing that $z\mapsto i\sqrt{z}$ maps $\mathbb{C}\setminus(-\infty,0]$ onto the upper half-plane, we define the functions $W^{-1}$ and $W$ by 
\begin{equation*}
   y=W^{-1}(z):=w^{-1}(i\sqrt{z})-b=\frac{1}{4a}-b+\frac{1}{a\pi^2}\ln^2\left( \sqrt{z} - \sqrt{z-1} \right)
\end{equation*}   
and 
\begin{equation*}
   z=W(y):=-w^2(y+b)=\cosh^2\Bigl(
\pi\sqrt{ay+ab-\tfrac{1}{4}}
\Bigr),\end{equation*}
completing the proof.
\end{proof}

\subsection{Decoupling function}
\label{sec:decoupling}

We recall our notation~\eqref{eq:def_lambda_1_2}, as well as the expression of the kernel~\eqref{def:kernel}, namely
\begin{equation*}
   K(x,y)=(x-y)^2+\lambda_1 x+\lambda_2 y.
\end{equation*} 
We introduce the decoupling polynomials 
\begin{equation}
\label{eq:decouplingpol}
   D_1(y):= y(y+\lambda_2)(y+2\lambda_1+\lambda_2
)
\quad\text{and}\quad
D_2(x):= x(x+\lambda_1)(x+2\lambda_2+\lambda_1
).
\end{equation}
This terminology is justified by the following two lemmas.
The identity \eqref{eq:decouplingfactor} right below is crucial: it provides the key step that makes Tutte's invariant method work in our context.
\begin{lemma}[Decoupling identity]
We have
\begin{multline}
\label{eq:decouplingfactor}
\left(x-\frac{y}{2}\right)
\overbrace{y(y+\lambda_2)(y+2\lambda_1+\lambda_2
)}^{D_1(y)}
-
\left(y-\frac{x}{2}\right)
\overbrace{x(x+\lambda_1)(x+2\lambda_2+\lambda_1
)}^{D_2(x)}
\\
=
\frac{1}{2}
\left(
x^2-y^2
+(\lambda_1+2\lambda_2)x
-(2\lambda_1+\lambda_2)y
\right)
\underbrace{\left((x-y)^2+\lambda_1 x+\lambda_2 y\right)}_{K(x,y)}.
\end{multline}
This implies that
\begin{equation}
\label{eq:decoupling3}
\frac{2x-y}{2y-x}
=
\frac{x(x+\lambda_1)(x+2\lambda_2+\lambda_1
)}{y(y+\lambda_2)(y+2\lambda_1+\lambda_2
)} =\frac{D_2(x)}{D_1(y)},
\quad\text{as long as } K(x,y)=0.
\end{equation}
\end{lemma}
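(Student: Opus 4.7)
The lemma is a purely algebraic statement, and my plan is to prove the polynomial identity \eqref{eq:decouplingfactor} first; the ratio \eqref{eq:decoupling3} then drops out in one line.

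For \eqref{eq:decouplingfactor}, both sides are polynomials of total degree $4$ in $(x,y)$ (with $\lambda_1,\lambda_2$ as parameters), so the statement amounts to matching finitely many coefficients. I would regard the LHS as a degree-$4$ polynomial in $y$ with coefficients in $\mathbb{R}[x,\lambda_1,\lambda_2]$ and show that it is divisible by the monic quadratic $K(x,y)=y^2+(\lambda_2-2x)y+(x^2+\lambda_1 x)$ in $y$. The cleanest way to check divisibility without carrying out a full long division is to substitute the two roots $y=A_2^\pm(x)$ of $K(x,\cdot)$ (given in \eqref{A_2}) into the LHS and verify that both give zero; equivalently, one iteratively reduces $y^2\equiv (2x-\lambda_2)y-(x^2+\lambda_1 x)\pmod{K}$ until the LHS is brought down to a polynomial of degree at most $1$ in $y$, whose two coefficients (explicit rational expressions in $x,\lambda_1,\lambda_2$) must vanish. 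Once divisibility is established, the quotient is a polynomial of bidegree at most $(2,2)$, and I would identify it with $\frac{1}{2}\bigl(x^2-y^2+(\lambda_1+2\lambda_2)x-(2\lambda_1+\lambda_2)y\bigr)$ by comparing a handful of leading coefficients, for instance those of $y^4$, $y^3$ and $x^3$.

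The implication \eqref{eq:decoupling3} is then immediate from \eqref{eq:decouplingfactor}: whenever $K(x,y)=0$, the right-hand side vanishes, and hence $(x-\tfrac{y}{2})D_1(y)=(y-\tfrac{x}{2})D_2(x)$; dividing through by $D_1(y)\cdot(y-\tfrac{x}{2})$ yields the claimed ratio $(2x-y)/(2y-x)=D_2(x)/D_1(y)$, valid away from the zero loci of the denominators.

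The main obstacle is not the verification itself, which is essentially mechanical, but rather motivating \emph{why} such a factorization of the decoupling numerator by the kernel should exist in the first place. This is the algebraic miracle that powers Tutte's invariant method in the present setting; as a mild sanity check before embarking on the computation, one observes that both sides of \eqref{eq:decouplingfactor} are antisymmetric under the joint swap $(x,y,\lambda_1,\lambda_2)\leftrightarrow(y,x,\lambda_2,\lambda_1)$, which constrains---but does not uniquely determine---the form of the decoupling polynomials $D_1$ and $D_2$.
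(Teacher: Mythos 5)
Your proposal is correct and follows the same route as the paper, which states the decoupling identity \eqref{eq:decouplingfactor} without proof as a direct polynomial check, and derives \eqref{eq:decoupling3} by dividing once $K$ is set to zero. Your divisibility-then-quotient plan (reduce the left-hand side modulo $K$ viewed as a monic quadratic in $y$, check the remainder vanishes, then pin down the degree-$2$ quotient by comparing a few coefficients) is a clean way to organize the mechanical verification, and the antisymmetry check under $(x,y,\lambda_1,\lambda_2)\leftrightarrow(y,x,\lambda_2,\lambda_1)$ is a worthwhile sanity check.
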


\begin{lemma}[Decoupling lemma] 
\label{lem:decoupling}
The following \emph{decoupling} relation holds:
\begin{equation}
\label{eq:decoupling1}
   \frac{\bigl(2x-A_2^-(x)\bigr)\bigl(2A_2^+(x)-x\bigr)}{\bigl(2x-A_2^+(x)\bigr)\bigl(2A_2^-(x)-x \bigr)}
=
 \frac{A_2^+(x)\bigl(A_2^+(x)+\lambda_2\bigr)\bigl(A_2^+(x)+2\lambda_1+\lambda_2\bigr)}{A_2^-(x)\bigl(A_2^-(x)+\lambda_2\bigr)\bigl(A_2^-(x)+2\lambda_1+\lambda_2\bigr)}.
\end{equation}
Accordingly, the function $G$ defined in \eqref{eq:defG} may be written in the form \eqref{eq:Gdecoupling}  with $D=1/D_1$; namely, as
\begin{equation}
\label{eq:decoupling2}
   G(y)=\frac{\overline{y}(\overline{y}+\lambda_2)(\overline{y}+2\lambda_1+\lambda_2)}{y(y+\lambda_2)(y+2\lambda_1+\lambda_2)}=\frac{D_1(\overline{y})}{D_1(y)}, \qquad \forall y\in\mathcal{P}_2.
\end{equation}
\end{lemma}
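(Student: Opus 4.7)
The polynomial identity \eqref{eq:decouplingfactor} of the previous lemma has already done the hard work: its corollary \eqref{eq:decoupling3} asserts that whenever $K(x,y)=0$, one has $(2x-y)/(2y-x)=D_2(x)/D_1(y)$. The plan is simply to apply this single identity at the two pairs of zeros $(x,A_2^+(x))$ and $(x,A_2^-(x))$ for a fixed $x\ge x^+$, and to recombine.

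For \eqref{eq:decoupling1}, I apply \eqref{eq:decoupling3} first with $y=A_2^+(x)$ and then with $y=A_2^-(x)$; both substitutions are legitimate since $K(x,A_2^{\pm}(x))=0$ by the very definition of $A_2^{\pm}$. The two resulting equalities
\[
\frac{2x-A_2^+(x)}{2A_2^+(x)-x}=\frac{D_2(x)}{D_1(A_2^+(x))},\qquad
\frac{2x-A_2^-(x)}{2A_2^-(x)-x}=\frac{D_2(x)}{D_1(A_2^-(x))}
\]
share the common numerator $D_2(x)$; taking their ratio eliminates it and, after rearrangement and insertion of the explicit form $D_1(z)=z(z+\lambda_2)(z+2\lambda_1+\lambda_2)$, yields \eqref{eq:decoupling1}.

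For \eqref{eq:decoupling2}, let $y\in\mathcal{P}_2$ and set $x:=A_1^+(y)$, which belongs to $[x^+,\infty)$ by the definition \eqref{eq:def_P_2} of $\mathcal{P}_2$. Since on the cut $[x^+,\infty)$ the two branches are complex conjugates, $A_2^+(x)=\overline{A_2^-(x)}$, we have $\{y,\overline{y}\}=\{A_2^+(x),A_2^-(x)\}$, and in particular $A_1^+(\overline{y})=x$ as well. Substituting this common value of $A_1^+(y)=A_1^+(\overline{y})$ into the definition \eqref{eq:defG} of $G$ and invoking \eqref{eq:decoupling3} once at $(x,y)$ and once at $(x,\overline{y})$ (both on the zero set of $K$), the two factors $D_2(x)$ cancel and we obtain $G(y)=D_1(\overline{y})/D_1(y)$, which is \eqref{eq:decoupling2}. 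The only point requiring mild attention, rather than being a genuine obstacle, is the consistent tracking of the two conjugate branches $A_2^{\pm}$ on $\mathcal{P}_2$ and of the associated sign conventions in the four factors defining $G$.
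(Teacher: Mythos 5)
Your proof is correct and follows essentially the same route as the paper's: both parts rest on evaluating the prefactorization identity \eqref{eq:decoupling3} at the two conjugate zeros of the kernel and taking the ratio so that the common factor $D_2(x)$ cancels. Your treatment of \eqref{eq:decoupling2} is if anything slightly more direct — you work on $\mathcal{P}_2$ itself via the substitution $x=A_1^+(y)$ rather than passing through $\mathcal{D}_2$ and invoking continuity up to the boundary as the paper does — and you correctly flag the one point that needs care, namely that $A_1^+(y)=A_1^+(\overline{y})$ on $\mathcal{P}_2$ (which holds because $A_1^+$ has a real Laurent-type formula off its cut and $A_1^+(y)$ is real there).
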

\begin{proof}
Evaluating \eqref{eq:decoupling3} at $(x,A_2^+(x))$, then at $(x,A_2^-(x))$, then dividing, we arrive at \eqref{eq:decoupling1}. Recalling~\eqref{eq:Gdecouplingequiv}, and noticing that for $y\in\mathcal{D}_2$ we have $y=A_2^-(A_1^+(y))$ and $\overline{y}=A_2^+(A_1^+(y))$, we re-cast~\eqref{eq:decoupling1}~in the form~\eqref{eq:Gdecoupling}, \eqref{eq:Gdecouplingequiv} with the function $G$ given as in~\eqref{eq:decoupling2}, by continuity on $\mathcal{P}_2$, the boundary of the domain $\mathcal{D}_2$ in~\eqref{eq:defD2}.
\end{proof}

\subsection{Explicit expression of the Laplace transform}
\label{sec:tuttegeneral}

Thanks to the decoupling Lemma~\ref{lem:decoupling} and the Carleman boundary value problem of Proposition~\ref{prop:BVP_hatnu_1}, we obtain a new invariant relationship for $\hatnu_1$. 

\begin{lemma}[Invariance]
\label{lem:invariant2}
The Laplace transform $\hatnu_1$ satisfies the following \emph{invariance} relation on the parabola $\mathcal{P}_2$:
\begin{equation*}
   \frac{\hatnu_1(\overline{y})}{\overline{y}(\overline{y}+\lambda_2)(\overline{y}+2\lambda_1+\lambda_2)}=
   \frac{\hatnu_1(y)}{y(y+\lambda_2)(y+2\lambda_1+\lambda_2)}, \qquad\forall \ y\in\mathcal{P}_2.
\end{equation*}
\end{lemma}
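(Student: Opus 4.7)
The plan is very short here because the lemma is essentially the algebraic combination of the two results just proved. The two ingredients are: (i) the Carleman boundary condition \eqref{eq:bvp} of Proposition~\ref{prop:BVP_hatnu_1}, which asserts that $\hatnu_1(\overline{y}) = G(y)\,\hatnu_1(y)$ for every $y\in\mathcal{P}_2$, where $G$ is the function defined in \eqref{eq:defG}; and (ii) the decoupling identity \eqref{eq:decoupling2} of Lemma~\ref{lem:decoupling}, which rewrites $G(y)$ in the quotient form $G(y) = D_1(\overline{y})/D_1(y)$ with $D_1(y) = y(y+\lambda_2)(y+2\lambda_1+\lambda_2)$, valid on the same parabola $\mathcal{P}_2$.

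First, I would simply substitute (ii) into (i), obtaining
\begin{equation*}
\hatnu_1(\overline{y}) \;=\; \frac{D_1(\overline{y})}{D_1(y)}\,\hatnu_1(y),\qquad \forall\,y\in\mathcal{P}_2.
\end{equation*}
Since for every $y\in\mathcal{P}_2$ neither $D_1(y)$ nor $D_1(\overline{y})$ vanishes (the roots $0$, $-\lambda_2$ and $-(2\lambda_1+\lambda_2)$ of $D_1$ are real and non-positive, while the parabola $\mathcal{P}_2$ of \eqref{eq:P2} is contained in $\{\Re y > -\tfrac{\lambda_2(2\lambda_1+\lambda_2)}{4(\lambda_1+\lambda_2)}\}$, so in particular $\mathcal{P}_2$ avoids these three real points under \eqref{eq:conddelta}), one may divide both sides by $D_1(\overline{y})$, yielding the invariance relation stated in the lemma. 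There is essentially no obstacle beyond checking that no division by zero occurs on $\mathcal{P}_2$; the only minor subtlety is a brief verification that the three real roots of $D_1$ lie outside $\mathcal{P}_2$, which is immediate from the explicit description \eqref{eq:P2} of the parabola.
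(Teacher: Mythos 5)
Your proof follows the paper's route exactly: the paper's own proof of this lemma is one line ("This follows directly from Proposition~\ref{prop:BVP_hatnu_1} and Lemma~\ref{lem:decoupling}"), and you carry out precisely that substitution of the decoupling identity \eqref{eq:decoupling2} into the boundary condition \eqref{eq:bvp}. One small imprecision in your added no-division-by-zero check: the half-plane inclusion $\mathcal{P}_2\subset\{\Re y\ge -\tfrac{\lambda_2(2\lambda_1+\lambda_2)}{4(\lambda_1+\lambda_2)}\}$ does rule out the roots $-\lambda_2$ and $-(2\lambda_1+\lambda_2)$, but it does not rule out the root $0$, which lies strictly inside that half-plane; the correct (and equally immediate) observation is that the only real point of $\mathcal{P}_2$ is its vertex $-\tfrac{\lambda_2(2\lambda_1+\lambda_2)}{4(\lambda_1+\lambda_2)}<0$, which differs from all three real roots of $D_1$ under \eqref{eq:conddelta}.
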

\begin{proof}
This follows directly from Proposition~\ref{prop:BVP_hatnu_1} and Lemma~\ref{lem:decoupling}.
\end{proof}

\begin{proof}[Proof of Theorem~\ref{thm:explicit_general} and Corollary~\ref{thm:explicit_symmetric}]
The key point of Tutte's invariant method consists in expressing the invariant of Lemma~\ref{lem:invariant2} in terms of the canonical conformal gluing function $W$ studied in Section~\ref{sec:conformalgluing}. Let us denote
\begin{equation} \label{eq: def f} 
   f(y)=\frac{\hatnu_1({y})}{y(y+\lambda_2)(y+2\lambda_1+\lambda_2)}-\frac{\hatnu_1(0)}{\lambda_2(2\lambda_1+\lambda_2)}\frac{W'(0)}{W(y)-W(0)},
\end{equation}
and remark that the function $f$ has no pole at $0$: by construction, the residue of $f$ at $0$ is zero. Furthermore, we observe that the points $-\lambda_2$ and $-2\lambda_1-\lambda_2$ are not in $\mathcal{D}_2$. 

We want to show that $f\equiv 0$. Lemma~\ref{lem:gluing} and Lemma~\ref{lem:invariant2} imply that $f$ satisfies the following boundary value problem:
\begin{itemize}
   \item $f$ is analytic in $\mathcal{D}_2$ and continuous on its boundary $\mathcal{P}_2$;
   \item $f$ satisfies the boundary condition $f(\overline{y})=f(y)$, for all $y\in\mathcal{P}_2$;
   \item $f(y) \rightarrow 0$ when $\vert y\vert  \to \infty$.
\end{itemize}
These three properties together imply that $f\equiv 0$, as can be deduced from Lem.~2 in \cite[Sec.~10.2]{litvinchuk_solvability_2000}. 

To provide some more concrete arguments leading to the conclusion $f\equiv 0$, we notice that $f\circ W^{-1}$ is analytic on the whole of $\mathbb{C}$ 
and goes to $0$ at infinity, and is therefore equal to $0$ using the classical Liouville theorem. 

Thanks to the functional equation~\eqref{eq:funceq} and using $\hatpi(0,0)=1$ since $\boldpi$ is a probability measure, we can show that
\begin{equation}
\label{eq:value_hatnu_1(0)}
    \hatnu_1(0)=\frac{2}{3}(2\lambda_1+\lambda_2).
\end{equation}
Using formula~\eqref{eq:W}, the properties of the cosine, 
as well as 
\begin{equation*}
W(0)=\cosh^2 \Bigl( i\pi \frac{\lambda_1}{2(\lambda_1+\lambda_2)} \Bigr), 
\end{equation*}
we obtain
\begin{align*}
W(y)-W(0)&=\cos^2\left(\pi \sqrt{\frac{\lambda_1^2}{4(\lambda_1+\lambda_2)^2}-\frac{y}{\lambda_1+\lambda_2}} \right)-\cos^2 \left( \pi \frac{\lambda_1}{2(\lambda_1+\lambda_2)} \right)
\\ &=
\frac{1}{2}
\cos \left( \pi \sqrt{\frac{\lambda_1^2}{(\lambda_1+\lambda_2)^2}-\frac{4y}{\lambda_1+\lambda_2}} \right)
-\frac{1}{2}\cos \left( \pi \frac{\lambda_1}{\lambda_1+\lambda_2}   \right) ,
\end{align*}
and compute \begin{equation*}
W'(0)=\frac{\pi}{\lambda_1} \sin \Bigl(\frac{\pi \lambda_1}{\lambda_1 + \lambda_2} \Bigr). 
\end{equation*}
Substituting these last three computations into \eqref{eq: def f},  and recalling $f \equiv 0$, we see that the proof of Theorem~\ref{thm:explicit_general} is now complete. Corollary~\ref{thm:explicit_symmetric} follows as an immediate consequence, upon taking $\lambda=\lambda_1=\lambda_2$.
\end{proof}

\section{Boundary densities}
\label{sec:boundary_density}


\subsection{Relation with the bivariate density}

For further use, it is convenient to interpret the densities $\nu_1(v)$ and $\nu_2(u)$ of the ``lateral measures'' in \eqref{eq:def:nu1} as the specialisations of the bivariate density $\pi(u,v)$ at $u=0$ and $v=0$, respectively. This will be used in particular to compute the constants $C$ and $C'$ appearing in Theorem~\ref{thm:maindensity}.
\begin{proposition}[Specialisations of $\pi$] We have $\pi(u,0)=\nu_2(u)$ and $\pi(0,v)=\nu_1(v)$.
\label{prop:boundarydensity}
\end{proposition}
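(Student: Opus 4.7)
The proposition identifies the boundary densities $\nu_1,\nu_2$ with the restrictions of the bivariate density $\pi$ to the coordinate axes of $[0,\infty)^2$. My strategy would be to deduce this directly from the Laplace-transform characterisations~\eqref{eq:def:laplnu1}--\eqref{eq:def:laplnu2}, combined with a one-dimensional Abelian (initial-value) argument for Laplace transforms.

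\textbf{Step 1.} Writing $\boldpi(\mathrm{d}u,\mathrm{d}v)=\pi(u,v)\,\mathrm{d}u\,\mathrm{d}v$ and applying Fubini to \eqref{eq:def:laplpi}, I would first recast
\[
x\,\hatpi(x,y) \;=\; \int_0^\infty e^{-yv}\Bigl(x\int_0^\infty e^{-xu}\pi(u,v)\,\mathrm{d}u\Bigr)\mathrm{d}v.
\]
Since $u\mapsto x e^{-xu}$ is the density of an exponential distribution with rate $x$, converging weakly to the Dirac mass at $0$ as $x\to\infty$, the classical initial-value theorem for one-sided Laplace transforms gives, for each $v>0$ at which $\pi(\cdot,v)$ is continuous at the origin,
\[
\lim_{x\to\infty}x\int_0^\infty e^{-xu}\pi(u,v)\,\mathrm{d}u \;=\; \pi(0,v).
\]

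\textbf{Step 2.} I would then push the limit through the outer integral via dominated convergence: the inner integrands are bounded uniformly in $x$ by $\sup_{u\geq 0}\pi(u,v)$, which (as confirmed a posteriori by Theorem~\ref{thm:maindensity}) is integrable against $e^{-yv}\,\mathrm{d}v$ for any $y>0$. Invoking the identity $\hatnu_1(y)=\lim_{x\to\infty}x\,\hatpi(x,y)$ from \eqref{eq:def:laplnu1}, I would obtain
\[
\hatnu_1(y)\;=\;\int_0^\infty e^{-yv}\,\pi(0,v)\,\mathrm{d}v,
\]
and the injectivity of the Laplace transform would then yield $\boldnu_1(\mathrm{d}v)=\pi(0,v)\,\mathrm{d}v$, i.e., $\nu_1(v)=\pi(0,v)$. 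The symmetric argument with the roles of $x$ and $y$ interchanged, starting from \eqref{eq:def:laplnu2}, gives $\nu_2(u)=\pi(u,0)$.

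\textbf{Main obstacle.} The only non-trivial point is the verification of the regularity hypotheses: continuity of $\pi$ up to the boundary of the orthant and a domination $\pi(u,v)\leq h(v)$ for some $h$ integrable against $e^{-yv}\,\mathrm{d}v$. Absent a direct regularity theory for the stationary density of the degenerate SRBM~\eqref{eq:gapSRBM}, these facts are most cleanly confirmed in hindsight from the explicit series representation in Theorem~\ref{thm:maindensity}, whose summands are pure exponentials $e^{-a_n u-b_n v}$ with positive, polynomially growing exponents, making continuity at the boundary, integrability, and the dominated-convergence bound all transparent.
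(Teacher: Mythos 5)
Your proof follows the same route as the paper's: apply the initial-value (Abelian) theorem for the one-sided Laplace transform in $u$, identify $\lim_{x\to\infty}x\,\hatpi(x,y)$ with $\hatnu_1(y)$ via \eqref{eq:def:laplnu1} (equivalently, by dividing the functional equation \eqref{eq:funceq} by $x$ and letting $x\to\infty$, which is how the paper states it), and conclude by Laplace-transform injectivity; your Steps 1--2 merely make explicit the dominated-convergence and regularity points that the paper leaves implicit. One caution on your ``Main obstacle'' paragraph: appealing to Theorem~\ref{thm:maindensity} to secure the needed regularity of $\pi$ risks circularity, since the paper uses the present proposition (via Corollary~\ref{cor:nuintermofp} and Statement~\ref{statement}) precisely to determine the constants $C,C'$ in that theorem; an independent regularity statement for the stationary density of the degenerate SRBM, or a weaker ``almost everywhere'' formulation resting only on Laplace inversion, would be the clean way to close that gap.
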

\begin{proof}
The initial value formula gives
\begin{equation*}
   \lim_{x\to\infty} x \hatpi(x,y)=\int_{\mathbb{R}_+} e^{-yv} \pi(0,v)\mathrm{d}v.
\end{equation*}
By dividing the functional equation~\eqref{eq:funceq} by $x$ and letting $x\to\infty$, we obtain
\begin{equation*}
   \lim_{x\to\infty} x \hatpi(x,y)=\hatnu_1(y)=\int_{\mathbb{R}_+} e^{-yv} \nu_1(v)\mathrm{d}v.
\end{equation*}
Comparing the two limits, we conclude that $\pi(0,v)=\nu_1(v)$. A similar argument shows that $\pi(u,0)=\nu_2(u)$.
\end{proof}

\subsection{Proof of Theorem~\ref{cor:boundary_density_non-sym} via Mittag-Leffler expansions}

We summon now from Theorem~\ref{thm:explicit_general}, in order to provide a proof of Theorem~\ref{cor:boundary_density_non-sym}.
Recall the Jacobi theta-type function 
\begin{equation*}
   \theta_{\mu_1}(q) = \sum_{n\in\mathbb Z} \Bigl(n+\frac{\mu_1}{2}\Bigr) q^{n(n+\mu_1)}, \quad q\in (0,1)
\end{equation*} 
introduced in \eqref{eq:def_theta_mu},
with $\mu_1=\frac{\lambda_1}{\lambda_1+\lambda_2}$ defined in \eqref{eq:def_mu}. The reason for introducing this function 
appears in the following important technical result, which shows that $\theta_{\mu_1}$ is naturally and intrinsically connected to the Laplace transform of the lateral measures $\boldnu_1$ in \eqref{eq:expression_hatnu_1_nonsym}, and thus also to its density function ${ \nu}_1  $ as in \eqref{eq: nu1(u)}. 
\begin{lemma}[Laplace transform of $\theta_\mu$]
\label{lem:Lap_theta_mu}
The Laplace transform of the function $u\mapsto \theta_{\mu_1} \left(e^{-(\lambda_1+\lambda_2) u} \right)$ 
is given for any $x\geq0$, by  
\begin{equation*}
    \int_0^\infty \theta_{\mu_1}(e^{-(\lambda_1+\lambda_2)u})e^{-ux}\mathrm{d}u = \frac{1}{\lambda_1+\lambda_2}\frac{\pi\sin(\pi\mu_1)}{
   \cos \Bigl( \pi \sqrt{\mu_1^2-\frac{4x}{\lambda_1+\lambda_2}} \Bigr)
   -\cos \Bigl( \pi \mu_1   \Bigr)}.
\end{equation*}    
\end{lemma}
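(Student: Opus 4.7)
Set $\alpha:=\lambda_1+\lambda_2$ and $\mu:=\mu_1$. The strategy is to expand $\theta_\mu$ as its defining series, exchange sum and Laplace integral, and then recognize the resulting Mittag-Leffler expansion as the stated closed form.

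Since $\mu\in(0,1)$, the exponents $n(n+\mu)$ are strictly positive for every $n\in\mathbb{Z}\setminus\{0\}$, so for each $u>0$ the series $\theta_\mu(e^{-\alpha u})=\frac{\mu}{2}+\sum_{n\neq 0}(n+\mu/2)e^{-n(n+\mu)\alpha u}$ converges absolutely and uniformly on $[u_0,\infty)$ for any $u_0>0$. I would first pair the $n\geq 1$ and $n\leq -1$ terms: a short algebraic computation gives that the Laplace transform of the paired exponentials equals $\mu(x-\alpha n^{2})/\bigl((\alpha n^{2}+x)^{2}-\mu^{2}\alpha^{2}n^{2}\bigr)$, which is $O(1/n^{2})$ as $n\to\infty$. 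Together with the elementary bound near $u=0$ (after accounting for the constant $n=0$ term), Fubini then yields, for $\Re x>0$,
\begin{equation*}
   \int_{0}^{\infty}\theta_\mu(e^{-\alpha u})\,e^{-xu}\,\mathrm{d}u \;=\;\frac{\mu}{2x}+\sum_{n\geq 1}\frac{\mu(x-\alpha n^{2})}{(\alpha n^{2}+x)^{2}-\mu^{2}\alpha^{2}n^{2}},
\end{equation*}
which is equivalently the bi-infinite Mittag-Leffler series $\sum_{n\in\mathbb{Z}}\frac{n+\mu/2}{n(n+\mu)\alpha+x}$ (understood in the symmetric sense).

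The next step is to check that the right-hand side of the claimed identity admits the same Mittag-Leffler expansion. Denoting by $F(x)$ that right-hand side, its poles occur where $\cos(\pi\sqrt{\mu^{2}-4x/\alpha})=\cos(\pi\mu)$, i.e.\ where $\sqrt{\mu^{2}-4x/\alpha}=\pm\mu+2n$, which gives precisely $x_{n}=-\alpha n(n+\mu)$, $n\in\mathbb{Z}$, matching the pole set of the series above. To compute the residue at $x_{n}$, expand $\cos(\pi\sqrt{\mu^{2}-4x/\alpha})$ to first order using $\sqrt{\mu^{2}-4x/\alpha}=2n+\mu$ at $x_{n}$ and $\frac{\mathrm{d}}{\mathrm{d}x}\sqrt{\mu^{2}-4x/\alpha}=-2/(\alpha\sqrt{\cdot})$; a direct calculation yields residue $n+\mu/2$, matching the series.

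Hence the difference $\Phi(x):=\int_{0}^{\infty}\theta_\mu(e^{-\alpha u})e^{-xu}\mathrm{d}u-F(x)$ is an entire function of $x$. It remains to prove $\Phi\equiv 0$. On the positive real axis, $\cos(\pi\sqrt{\mu^{2}-4x/\alpha})=\cosh(\pi\sqrt{4x/\alpha-\mu^{2}})$ grows exponentially, so $F(x)\to 0$; and the Laplace integral likewise tends to $0$ by dominated convergence. The natural way to upgrade this decay to a bound over the whole plane — the main technical obstacle — is to pre-compose with the conformal gluing function: the composition $\Phi\circ W^{-1}$, with $W$ of Lemma~\ref{lem:gluing}, is entire on $\mathbb{C}$ and vanishes at infinity, hence is identically zero by Liouville's theorem (exactly as in the conclusion of the proof of Theorem~\ref{thm:explicit_general}). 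This proves the claimed formula.
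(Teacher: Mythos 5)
Your term-by-term integration, the $n\leftrightarrow -n$ pairing showing absolute convergence, and the pole/residue computation for $F$ are all correct, and they track the first half of the paper's proof faithfully. Where you diverge is the finishing step: the paper concludes by quoting a classical Mittag-Leffler expansion for the shifted cosine $s\mapsto \pi/(\cos(\pi\mu s)-\cos(\pi\mu))$ (stated as a lemma with proof omitted), whereas you try to re-derive it via a Liouville argument. That strategy is legitimate in principle, but the way you close it off does not work.

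The problem is your invocation of the conformal gluing function $W^{-1}$. First, $W^{-1}$ is defined only on $\mathbb{C}\setminus(-\infty,0]$, so $\Phi\circ W^{-1}$ is analytic only on the slit plane; to extend it across $(-\infty,0)$ you would need the boundary identity $\Phi(y)=\Phi(\overline{y})$ on $\mathcal{P}_2$, which is precisely what Proposition~\ref{prop:BVP_hatnu_1} provides for $\hatnu_1$ but which $\Phi$ does \emph{not} satisfy: since $\Phi$ is entire and real on the positive real axis, the Schwarz reflection principle gives $\Phi(\overline{y})=\overline{\Phi(y)}$, which is a different relation. Second, and more to the point, $\Phi$ is already an entire function of $x$, so there is nothing to be gained from a change of variable via $W^{-1}$; Liouville can be applied to $\Phi$ directly \emph{once} you establish that $\Phi$ is bounded at infinity in all directions. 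That growth estimate (which you correctly flag as ``the main technical obstacle'') is the genuine content of the argument, and the conformal gluing does not supply it: you must bound $\Psi$ and $F$ on a sequence of circles $\lvert x\rvert=R_N$ chosen to avoid the poles, which is the classical Mittag-Leffler contour estimate. As written, the proposal leaves a real gap at exactly the step it identifies as hardest. The simplest repair is to either carry out that contour estimate explicitly, or to cite the Mittag-Leffler expansion of $\pi/(\cos(\pi\mu s)-\cos(\pi\mu))$ as the paper does and substitute $s^{2}=1-\tfrac{4x}{\mu_1^{2}(\lambda_1+\lambda_2)}$.
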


Before proving Lemma~\ref{lem:Lap_theta_mu}, we first show how it implies Theorem~\ref{cor:boundary_density_non-sym}.
Let us recall the decoupling polynomial $D_1$ introduced in~\eqref{eq:decouplingpol}, and the differential operator $\D_1$ introduced in~\eqref{eq:diff_op} for a smooth function $f$ as 
\begin{equation*} 
\label{eq:defD1f}
\D_1[f] := D_1\left( \frac{\mathrm{d}}{\mathrm{d}u} \right) [f] = f'''+2(\lambda_1 + \lambda_2) f''+\lambda_2(\lambda_2 + 2\lambda_1) f'.
\end{equation*} 
We also introduce its dual operator
\begin{equation*} 
\label{eq:defD1fdual} \D_1^*[f] := -f'''+2(\lambda_1 + \lambda_2) f''-\lambda_2(\lambda_2 + 2\lambda_1) f'.
\end{equation*} 
Using  Theorem~\ref{thm:explicit_general} and Lemma~\ref{lem:Lap_theta_mu} together, we write the expression \eqref{eq:expression_hatnu_1_nonsym} as  
\begin{align}
\label{eq:before_IBP}
    \hatnu_1(y)
    &=\frac{4(\lambda_1+\lambda_2)}{3\lambda_1\lambda_2} D_1(y) \int_0^\infty \theta_{\mu_1} \left(e^{-(\lambda_1+\lambda_2)u}\right)e^{-uy}\mathrm{d}u
   \\ &= 
    \frac{4(\lambda_1+\lambda_2)}{3\lambda_1\lambda_2}  \int_0^\infty \theta_{\mu_1}\left(e^{-(\lambda_1+\lambda_2)u}\right) \D_1^*[e^{- \boldsymbol{\cdot} y }](u)\mathrm{d}u\nonumber
    \\ &= 
    \frac{4(\lambda_1+\lambda_2)}{3\lambda_1\lambda_2}  \int_0^\infty \D_1 \left[ \theta_{\mu_1}\left(e^{-(\lambda_1+\lambda_2) \boldsymbol{\cdot} }\right) \right](u) e^{-uy} \mathrm{d}u
    .
    \label{eq:after_IBP}
\end{align}
To prove the last equality, we simply integrate by parts in \eqref{eq:before_IBP} on the dual operator. In that step, we crucially use the fact that $\theta_{\mu_1}$ and all its derivatives tend to $0$ as $q \to 1$, $q<1$. While this property is not clear at all from the definition \eqref{eq:def_theta_mu} of $\theta_{\mu_1}$, it appears as a direct consequence of the crucial Lemma~\ref{lem:Jacobi_transform} below, which establishes a Jacobi-type modular identity for $\theta_{\mu_1}$.

Thanks to Equation~\eqref{eq:after_IBP} and using classical results on the injectivity of the Laplace transform, we deduce that
\begin{equation*}
   \nu_1(u)= \frac{4(\lambda_1+\lambda_2)}{3\lambda_1\lambda_2}  \D_1 \left[ \theta_{\mu_1}\left(e^{-(\lambda_1+\lambda_2) \boldsymbol{\cdot} }\right) \right](u) \,, 
\end{equation*}
that is, the claim \eqref{eq: nu1(u)}. After some computations and simplifications, one easily finds
\begin{multline*}
    \D_1 \left[ \theta_{\mu_1}\left(e^{-(\lambda_1+\lambda_2) \boldsymbol{\cdot} }\right) \right](u)
   = \sum_{n\in\mathbb Z} (n+{\mu_1}/{2})
   D_1\Bigl(-(\lambda_1+\lambda_2)n(n+\mu_1)\Bigr)
   e^{-(\lambda_1+\lambda_2) u }
    = -(\lambda_1 + \lambda_2)^3\times\\\sum_{n\in\mathbb Z}(n-1)n(n+1)(n - 1 + \mu_1)
    (n + \mu_1)(n + 1 + \mu_1 )(n + \mu_1 /2)e^{-(n^2+\mu_1 n)(\lambda_1+\lambda_2)u}.
\end{multline*}
We thus have proved Theorem~\ref{cor:boundary_density_non-sym}. What remains is to prove Lemma~\ref{lem:Lap_theta_mu}.

\begin{proof}[Proof of Lemma~\ref{lem:Lap_theta_mu}]
Integrating term by term, one finds
\begin{align*}
    \int_0^\infty \theta_{\mu_1}(e^{-(\lambda_1+\lambda_2)u})e^{-ux}d\mathrm{d}u &= \sum_{n\in\mathbb Z} \frac{n+\frac{\mu_1}{2}}{x+(\lambda_1+\lambda_2)n^2+\lambda_1 n} \\&= -\frac{2}{\mu_1(\lambda_1+\lambda_2)}\sum_{n\in\mathbb Z} \frac{1+\frac{2}{\mu_1}n}{s^2-\bigl(1+\frac{2}{\mu_1}n\bigr)^2},
\end{align*}
where we have set $s^2=1-\frac{4}{\mu_1^2}\frac{x}{\lambda_1+\lambda_2}$. The right-hand side of the above identity is then computed from the following classical result, the proof of which is omitted:
\begin{lemma}[Mittag-Leffler expansion of shifted cosine]
Let $\mu\in(0,1)$. One has for $s\in\mathbb C$
\begin{equation*}
   \frac{\pi}{\cos(\pi\mu s) - \cos(\pi\mu)} =  -\frac{2}{\mu\sin(\pi\mu)}\sum_{n\in\mathbb Z}\frac{1 + \frac{2}{\mu}n}{s^2 - \bigl(1 + \frac{2}{\mu}n\bigr)^2}.
\end{equation*}
\end{lemma}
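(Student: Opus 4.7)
The plan is to derive the identity from the classical Mittag-Leffler expansion of the cotangent,
\begin{equation*}
\pi \cot(\pi z) \,=\, \lim_{N \to \infty} \sum_{n=-N}^{N} \frac{1}{z - n},
\end{equation*}
combined with elementary trigonometric manipulations. The starting point is the product-to-sum identity $\cos A - \cos B = -2\sin\frac{A+B}{2}\sin\frac{A-B}{2}$ applied with $A = \pi\mu s$ and $B = \pi\mu$, which gives
\begin{equation*}
\cos(\pi\mu s) - \cos(\pi\mu) \,=\, -2 \sin\bigl(\tfrac{\pi\mu}{2}(s+1)\bigr)\sin\bigl(\tfrac{\pi\mu}{2}(s-1)\bigr).
\end{equation*}

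Setting $\alpha := \tfrac{\pi\mu}{2}(s+1)$ and $\beta := \tfrac{\pi\mu}{2}(s-1)$, so that $\alpha - \beta = \pi\mu$, I would then apply the elementary identity $\frac{1}{\sin\alpha \sin\beta} = \frac{\cot\beta - \cot\alpha}{\sin(\alpha - \beta)}$ (a one-line consequence of $\cot\beta - \cot\alpha = \sin(\alpha - \beta)/(\sin\alpha \sin\beta)$) to recast the left-hand side of the lemma as
\begin{equation*}
\frac{\pi}{\cos(\pi\mu s) - \cos(\pi\mu)} \,=\, \frac{\pi}{2\sin(\pi\mu)}\bigl[\cot\alpha - \cot\beta\bigr].
\end{equation*}
Inserting the Mittag-Leffler expansion of $\cot$ into each term and combining the fractions indexed by the same $n$ yields, after straightforward simplification using $(\alpha - n\pi)(\beta - n\pi) = \tfrac{\pi^2\mu^2}{4}\bigl[(s - 2n/\mu)^2 - 1\bigr]$ and $\beta - \alpha = -\pi\mu$, the intermediate identity
\begin{equation*}
\frac{\pi}{\cos(\pi\mu s) - \cos(\pi\mu)} \,=\, -\frac{2}{\mu \sin(\pi\mu)}\sum_{n \in \mathbb Z} \frac{1}{(s - 2n/\mu)^2 - 1}.
\end{equation*}

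It then remains to match this to the form stated in the lemma. Applying partial fractions both to $\frac{1}{(s - 2n/\mu)^2 - 1}$ and to $\frac{1 + 2n/\mu}{s^2 - (1 + 2n/\mu)^2}$, each summand splits as a difference of two simple poles; the symmetric partial sums $\sum_{|n| \le N}$ on the two sides then coincide term by term after the substitution $n \mapsto -n$ applied to one of the resulting half-series, which merely permutes the indices. The main (and only) subtlety in this argument is that each of the series involved is only conditionally convergent, so every sum must be interpreted as the symmetric limit $\lim_N \sum_{|n| \le N}$, inherited from the cotangent expansion itself; once this convention is fixed throughout, every index shift above is legitimate and the term-by-term identification closes the proof.
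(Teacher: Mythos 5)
Your proof is correct, and it is a reasonable reconstruction of a ``classical result'' whose proof the paper explicitly omits (the paper states only that the lemma is classical and gives no argument), so there is no paper-side proof to compare against. The steps --- the product-to-sum identity $\cos A - \cos B = -2\sin\frac{A+B}{2}\sin\frac{A-B}{2}$, rewriting $1/(\sin\alpha\sin\beta)$ as $(\cot\beta-\cot\alpha)/\sin(\alpha-\beta)$, inserting the cotangent expansion, pairing the $n$th terms to produce $\sum_n 1/\bigl[(s-2n/\mu)^2-1\bigr]$, and reindexing one half of a partial-fraction split by $n\mapsto -n$ --- are all sound, and the symmetric-limit convention handles the only convergence subtlety.

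One small imprecision worth flagging: you write that ``each of the series involved is only conditionally convergent,'' but in fact the intermediate series $\sum_n 1/\bigl[(s-2n/\mu)^2-1\bigr]$ has terms of size $O(1/n^2)$ and is therefore absolutely convergent; it is only the target series on the right-hand side of the lemma, whose terms decay like $1/n$, and the two half-series obtained after partial fractions, that require the symmetric-limit interpretation. This does not affect the argument --- performing the reindexing on the finite symmetric partial sums $\sum_{|n|\le N}$ before passing to the limit is exactly the right move, and the identification of the $N$th partial sums is exact --- but stating precisely which series converge absolutely and which only symmetrically would tighten the exposition.
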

The proof of Lemma~\ref{lem:Lap_theta_mu} is complete.
\end{proof}

\begin{lemma}[Jacobi transformation for $\theta_\mu$]
\label{lem:Jacobi_transform}
For any $u\geq 0$,
\begin{equation*}
 \theta_\mu(e^{-u}) =  \sum_{n\in\mathbb Z}\Bigl(n + \frac{\mu}{2}\Bigr)\exp\bigl(-n(n + \mu)u\bigr) = \frac{\pi^{3/2}}{u^{3/2}}
\sum_{n\in\mathbb Z} n\sin(\pi\mu n)\exp\Bigl(-\frac{\pi^2 n^2}{u} + \frac{\mu^2 u}{4}\Bigr).
\end{equation*}    
\end{lemma}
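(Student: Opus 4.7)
The identity is a standard Jacobi-type transformation, and my plan is to derive it directly from the Poisson summation formula applied to a Gaussian-with-linear-prefactor.

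\medskip

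\textbf{Step 1: Complete the square.} Writing $n(n+\mu) = (n+\mu/2)^2 - \mu^2/4$, I would first rewrite
\begin{equation*}
\theta_\mu(e^{-u}) \,=\, e^{\mu^2 u/4}\sum_{n\in\mathbb Z}\Bigl(n+\frac{\mu}{2}\Bigr)\,\exp\!\Bigl(-u\bigl(n+\tfrac{\mu}{2}\bigr)^{2}\Bigr).
\end{equation*}
Thus, introducing $g(x):=x\,e^{-u x^{2}}$ (which, together with all its derivatives, decays faster than any polynomial), the task reduces to computing
\begin{equation*}
S(\mu):=\sum_{n\in\mathbb Z} g\!\Bigl(n+\frac{\mu}{2}\Bigr).
\end{equation*}

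\medskip

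\textbf{Step 2: Apply twisted Poisson summation.} The shifted Poisson formula gives
\begin{equation*}
\sum_{n\in\mathbb Z} g\!\Bigl(n+\frac{\mu}{2}\Bigr) \,=\, \sum_{k\in\mathbb Z} \widehat{g}(k)\,e^{i\pi k\mu},
\end{equation*}
where $\widehat{g}(k)=\int_{\mathbb R} g(x)e^{-2\pi i k x}\mathrm{d}x$. Its applicability is standard because $g$ is Schwartz class. To compute $\widehat{g}$, I would start from the classical identity $\int_{\mathbb R} e^{-u x^2}e^{-2\pi i k x}\mathrm{d}x=\sqrt{\pi/u}\,e^{-\pi^2 k^2/u}$ and differentiate in $k$, using $\widehat{x f}(k) = \tfrac{i}{2\pi}\widehat{f}'(k)$. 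This yields
\begin{equation*}
\widehat{g}(k) \,=\, -\frac{i\pi k}{u}\sqrt{\frac{\pi}{u}}\,e^{-\pi^2 k^2/u}.
\end{equation*}

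\medskip

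\textbf{Step 3: Pair $\pm k$ to extract the sine.} Substituting $\widehat g$ back into the Poisson identity,
\begin{equation*}
S(\mu) \,=\, -\frac{i\pi}{u}\sqrt{\frac{\pi}{u}}\sum_{k\in\mathbb Z}k\,e^{-\pi^2 k^2/u}e^{i\pi k\mu}.
\end{equation*}
Under $k\mapsto -k$ the cosine part $k\cos(\pi k\mu)$ is odd and cancels, while the sine part $k\sin(\pi k\mu)$ is even, so
\begin{equation*}
S(\mu) \,=\, \frac{\pi^{3/2}}{u^{3/2}}\sum_{k\in\mathbb Z} k\sin(\pi\mu k)\,e^{-\pi^2 k^{2}/u}.
\end{equation*}
Multiplying by the prefactor $e^{\mu^{2}u/4}$ from Step~1 yields exactly the claimed right-hand side.

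\medskip

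\textbf{Expected difficulties.} There is no deep obstacle here: the two non-mechanical ingredients are (a) the cancellation of the cosine part by symmetry, and (b) the justification of the twisted Poisson summation, both of which are routine once one observes that $g(x)=xe^{-ux^{2}}$ (for $u>0$) lies in the Schwartz class so that both sides converge absolutely and the Poisson identity holds pointwise. The only care needed is that the formula requires $u>0$; the case $u=0$ is not part of the statement (and the right-hand side must be interpreted via the limit as $u\downarrow 0^+$, which is not claimed here). A minor cosmetic point: one may equivalently apply Poisson summation first and complete the square second; the two orders give the same result and the above choice is the more transparent one.
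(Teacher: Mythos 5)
Your proof is correct and follows essentially the same route as the paper's: both apply the Poisson summation formula to the Gaussian-with-linear-prefactor and complete the square, the only difference being that you complete the square before invoking Poisson while the paper does so inside the Fourier integral (a reordering you yourself note is immaterial). Your remark that the statement really requires $u>0$ is also well taken.
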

\begin{proof}
    Consider the function $f(x)=(x+\frac{\mu}{2})e^{-ux(x+\mu)}$ and its Fourier transform
    $\widehat f (y)=\int_{-\infty}^{\infty}f(x)e^{-2i\pi yx} \mathrm{d}x$. 
   The classical Poisson summation formula expresses now the function of \eqref{eq:def_theta_mu}   as 
  \begin{equation*}
   \theta_\mu(e^{-u})
  =\sum_{n\in\mathbb Z} f(n) 
  = \sum_{n\in\mathbb Z} \widehat f(n) , 
  \end{equation*}
where a direct computation gives
  \begin{align*}
     \widehat f (y)  &=
     \int_{-\infty}^{\infty} \left(x+\frac{\mu}{2}\right)e^{-ux(x+\mu)-2i\pi yx} \mathrm{d}x
     \\ &= e^{\frac{\mu^2 u}{4} +i\pi y \mu}
     \int_{-\infty}^{\infty} \left(x+\frac{\mu}{2}\right)e^{-u(x+\frac{\mu}{2})^2-2i\pi y (x+\frac{\mu}{2}) } \mathrm{d}x
     \\ &=
     e^{\frac{\mu^2 u}{4} +i\pi y \mu}
   \frac{(-1)}{2i\pi} \frac{\mathrm{d}}{\mathrm{d}y} 
   \underbrace{
     \int_{-\infty}^{\infty} e^{-u(x+\frac{\mu}{2})^2-2i\pi y (x+\frac{\mu}{2}) } \mathrm{d}x
}_{\sqrt{\frac{\pi}{u}} e^{-\frac{\pi^2 y^2}{u}}}
\\ &=
e^{\frac{\mu^2 u}{4} +i\pi y \mu}
   \frac{1}{2i\pi} \sqrt{\frac{\pi}{u}}  \frac{\pi^2}{u} 2y e^{-\frac{\pi^2 y^2}{u}} 
   \\ &= y\frac{\pi^{3/2}}{u^{3/2}} \bigl(\sin(\pi\mu y) - i \cos (\pi\mu y)\bigr) \exp\Bigl(-\frac{\pi^2 y^2}{u} + \frac{\mu^2 u}{4}\Bigr)  .
  \end{align*}
  The result of Lemma~\ref{lem:Jacobi_transform} follows directly from this last expression and the Poisson summation formula, noting that the imaginary parts disappear for parity reasons, when summing over $\mathbb{Z}$. 
\end{proof}

\begin{remark}
    The Jacobi theta-like function $\theta_\mu$ of \eqref{eq:def_theta_mu} is positive for $q\in[0,1)$. This is not fully clear when looking at the definition \eqref{eq:def_theta_mu}, but  follows easily from the probabilistic interpretation of $\theta_\mu$ provided in Appendix~\ref{sec:app_theta_mu}.
\end{remark}

Let us mention the paper \cite{Salminen-Vignat-23} by Salminen and Vignat, which interprets the four modular identities for ``classical'' theta functions in terms of Brownian motion either reflected or killed at the endpoints of an interval. Here, we introduce a ``novel'' theta-like function $\theta_\mu$ as in \eqref{eq:def_theta_mu}, derive for it the modular identity of Lemma \ref{lem:Jacobi_transform}, and connect it with Brownian motion conditioned to live forever inside a given interval; see Appendix~\ref{sec:app_theta_mu} for this interpretation and connection.

\subsection{Further comments on the symmetric case of Corollary~\ref{cor:boundary_density_sym}}

Specializing Theorem~\ref{cor:boundary_density_non-sym} to the symmetric case  $\lambda=\lambda_1=\lambda_2=\lambda$ (thus $\mu_i=\frac{1}{2}$ in~\eqref{eq:def_mu}), one finds
\begin{multline*}
    \nu_2(u) =\\ -\frac{64}{3}\lambda^2\sum_{n\in\mathbb Z}(n-1)n(n+1)(n - 1/2)
    (n + 1/2)(n + 3/2)(n + 1/4)\exp(-n(2 n+1)\lambda u).
\end{multline*}
Looking at the exponents appearing in the above exponential terms, a simplification occurs due to the fact that
\begin{equation}
    \label{eq:equality_orbits}
    \bigl\{n(2 n+1): n\in\mathbb Z\bigr\}=\bigl\{\tfrac{1}{2}{n(n+1)} : n\geq 0\bigr\}.
\end{equation}
More precisely, the $\frac{n(n+1)}{2}$ for even (resp.\ odd) values of $n\geq 0$ correspond to the $n(2 n+1)$ for non-negative (resp.\ negative) values of $n$. Performing a straightforward change of index, one gets
\begin{align*}
    \nu_2&(u) =\\ & \frac{64 \lambda^2}{3\times 256}\sum_{n\geq 0} (-1)^{n-1}\frac{(n-2)(n-1)n(n+1)(n+2)(n+3)(2n + 1)}{256}\exp\Bigl(-\frac{n(n+1)}{2}\lambda u\Bigr),
\end{align*}
which proves Corollary~\ref{cor:boundary_density_sym} since $3\times 256/64=12$.


\begin{remark}[Relation with the Jacobi theta function]
    Although not crucial for our purpose, it is interesting to observe that the function $\theta_\mu$ in \eqref{eq:def_theta_mu} simplifies to a classical Jacobi theta function (sometimes called Jacobi constant)\ in the symmetric case $\mu=\frac{1}{2}$:
    \begin{equation*}
        4\theta_{\frac{1}{2}}(q^4) = \theta_{1,1}(q)= \sum_{n=-\infty}^\infty (-1)^n nq^{n(n+1)}.
    \end{equation*}  
This function, and its generalizations
\begin{equation*}
   \theta_{m,k}(q) = \sum_{n=-\infty}^\infty (-1)^n n(n-1)\cdots (n-k+1)q^{n^2+mn},
\end{equation*}
are studied by Huber in \cite{Huber-08}.
\end{remark}

\section{Sum of gaps as an infinite sum of independent exponential variables}
\label{sec:sum_exp_sum}

\subsection{The symmetric case}

Let us recall the notation of \eqref{eq:gapdef}--\eqref{eq:recurrence_condition} for the degenerate reflected Brownian motion $\, (G(\cdot), H(\cdot))\,$. The basic adjoint relation of \eqref{eq:funceq} describes the stationary distribution $\,{\bm \pi} \,$ of this process. We prove the first part of Theorem~\ref{thm:sum_exp_sum}.

\begin{proof}[Proof of Theorem~\ref{thm:sum_exp_sum} \ref{thm4.1}]
When $\, \lambda_{1}\, =\,  \lambda_{2}\, =\,  \lambda\,$, the Laplace transform \eqref{eq:expression_hatnu_1_sym} in Corollary~\ref{thm:explicit_symmetric} has the product form 
\begin{equation} \label{eq: App1}
\mathbb E ^{\bm \pi} [ e^{-y(G(T)+H(T))}] \, =\, \widehat{\bm \pi} (y, y) \, =\, \frac{\,\widehat{\bm \nu}_{1} (y)\,}{\,2\lambda\,} \, =\,  \frac{\,\widehat{\bm \nu}_{2}(y)\,}{\,2\lambda\,} \, =\,  \prod_{k=1}^{\infty} \frac{\,{\bm \ell}_{k}\,}{\, y + {\bm \ell}_{k}\,}  \, ; \quad y \in [0, \infty) 
\end{equation}
of those of exponential random variables with parameters \eqref{eq: App2}.

To verify \eqref{eq: App1}, first let us apply the infinite product formula 
\begin{equation}
\label{eq:cos_infinite_product_formula}
\cos (\sqrt{-1} z) \, =\, \cosh (z) \, =\,  \prod_{k=1}^{\infty} \Big( 1 + \frac{\,4 z^{2}\,}{\,\pi^{2}(2k-1)^{2}\,} \Big)\, ; \quad z \in \mathbb  C \, 
\end{equation}
of the hyperbolic cosine function to the expression \eqref{eq:expression_hatnu_1_sym}, and obtain for $\, y \in [0, \infty) \,$ 
\begin{equation*}
\frac{\,\hatnu_1(y)\,}{\,2 \lambda\,} \, =\,  \frac{\,2 \pi y (y+\lambda) ( y + 3 \lambda)\,}{\,3 \lambda^{3} \cos ( (\pi/2) \sqrt{ 1 - (8y/\lambda)}) \,} \, =\,  \frac{\,2\pi\,}{\,3 \lambda^{3}\,}y(y+\lambda) (y+3 \lambda) \prod_{k=1}^{\infty} \Big( 1 + \frac{\,(8y/\lambda) - 1\,}{\,  (2k-1)^{2}\,}\Big)^{-1} \, . 
\end{equation*}
Note that the first three terms  $\, \lambda / 8 y  \,$, $\, 9\lambda/ (8 (y+\lambda))\,$ and $\, 25 \lambda /(8 (y + 3 \lambda))\,$ with $\, k \, =\,  1, 2, 3\,$ in the infinite product  may be cancelled with the constant multiple of $\, y (y+\lambda)(y+3\lambda)\,$. Also, note that after the cancellation, the term in the infinite product is rewritten as 
\begin{equation*}
\Big( 1 + \frac{\,(8y/\lambda) - 1\,}{\,  (2k-1)^{2}\,}\Big)^{-1} \, =\,  \frac{\,(2k-1)^{2}\,}{\,(8y/\lambda) + (2k-1)^{2}-1 \,} \, =\, \Big( 1 + \frac{\,1\,}{\,4 k (k-1)\,} \Big) \cdot \frac{\,{\bm \ell}_{k-3}\,}{\,y + {\bm \ell}_{k-3}\,}  \, ; \quad k \ge 4 \, ,   
\end{equation*}
where $\, {\bm \ell}_{k}\,$ is defined in \eqref{eq: App2} for $\, k \in \mathbb N\,$. Thus, with these considerations, we obtain 
\begin{equation*}
\frac{\,\hatnu_1(y)\,}{\,2 \lambda\,} \, =\,  \frac{\,2 \pi\,}{\,3\,} \cdot \frac{\,9 \cdot 25\,}{\,8^{3}\,} \cdot \prod_{k=4}^{\infty} \Big( 1 + \frac{\,1\,}{\,4 k (k-1)\,} \Big) \cdot \prod_{k=4}^{\infty} \frac{\,{\bm \ell}_{k-3}\,}{\,y + {\bm \ell}_{k-3}}\,  =\,  \prod_{k=1}^{\infty} \frac{\,{\bm \ell}_{k}\,}{\,y + {\bm \ell}_{k}\,} \, , 
\end{equation*}
i.e., \eqref{eq: App1}, because of the infinite product $\,\prod_{k=4}^{\infty} ( 1 + \frac{\,1\,}{\,4 k (k-1)\,} ) \, =\, \frac{256}{75 \pi}\,$. 

\smallskip 

Each exponential random variable $\, {\bm \varepsilon}_{k}\,$ with parameter $\, {\bm \ell}_{k}\,$ has expectation $\, \mathbb E^{\bm \pi} [ {\bm \varepsilon}_{k} ] \, =\,  1 / {\bm \ell}_{k}\,$, $\, k \in \mathbb N\,$, and hence, the infinite series $\, \sum_{k=1}^{\infty} {\bm \varepsilon}_{k}\,$ has the (finite)\ expectation 
\begin{equation*}
 \mathbb E  ^{\bm \pi} \sum_{k=1}^{\infty} {\bm \varepsilon}_{k}  \, =\, \sum_{k=1}^{\infty} \frac{\,1\,}{\,{\bm \ell}_{k}\, } \, =\,  \frac{\,2\,}{\,\lambda\,} \sum_{k=1}^{\infty} \frac{\,1\,}{\,(k+2)(k+3)\,} \, =\,  \frac{\,2\,}{\,3\lambda\,}  \, , 
\end{equation*}
in accordance with (2.76) of \cite{ichiba_karatzas_degenerate_22-arxiv}. This implies $\, \sum_{k=1}^{\infty} {\bm \varepsilon}_{k} < \infty\,$ almost surely, and hence, the corresponding stationary density $\, {\bm \sigma} \,$ in \eqref{eq: App3} is well defined. 
\end{proof}

The stationary distribution of the sum $\, G(T) + H(T)\,$ is given by the {\it infinite} convolution of exponential distributions with parameters $\, \{ {\bm \ell}_{k}\}_{k \in \mathbb N}\,$. It is infinitely divisible with  L\'evy density $\sum_{k=1}^\infty e^{-{\bm \ell}_k z }/z $, $z > 0$, that is, 
\begin{equation*}
\mathbb E^{\bm \pi} [ e^{-y(G(T)+H(T))}] = \exp \Big( - \int^\infty_0 ( 1 - e^{-yz} ) \, \frac{1}{z}\sum_{k=1}^\infty e^{-{\bm \ell}_k z }  \, \mathrm d z   \Big) \, ; \quad y \in [0, \infty) \,.
\end{equation*}
The convolution of finitely many exponential distributions is known to be the Coxian (or ``hypoexponential''  or ``phase-type'') distribution, and is used in queueing theory. The connection between infinite sums of exponential random variables and infinitely divisible distributions is discussed in \cite{biane_pitman_yor_01} and \cite{kapodistria_saxena_boxma_kella_2023}. 

Moreover, the stationary density function ${\bm \sigma}$ of $G(T) + H(T)$ provides the marginal stationary density function of $G(T)$. It follows from \eqref{eq: App3}  that it has an exponential moment
\begin{equation} \label{eq: App4}
\begin{split}
&  \int^{\infty}_{0} e^{\lambda z } {\bm \sigma}(z) {\mathrm d} z \, =\, \mathbb E^{\bm \pi} [ e^{\lambda (G(T)+H(T))} ] \, =\,   \mathbb E^{\pi} [ e^{\lambda \sum_{k=1}^{\infty} {\bm \varepsilon}_{k}} ]  \\
\, & \quad  =\,  \mathbb E^{\bm \pi}[ \prod_{k=1}^{\infty} e^{\lambda {\bm \varepsilon}_{k}} ] \, =\, \prod_{k=1}^{\infty} \mathbb E^{\bm \pi} [ e^{\lambda {\bm \varepsilon}_{k}}] 
 =\,  \prod_{k=1}^{\infty} \frac{\,{\bm \ell}_{k}\,}{\,{\bm \ell}_{k}- \lambda\,} \\
 & \quad =\,  \prod_{k=1}^{\infty} \Big( 1 + \frac{\,\lambda\,}{\,{\bm \ell}_{k} - \lambda\,} \Big) \, =\,  \prod_{k=1}^{\infty} \Big( 1 + \frac{\,1\,}{\,(k(k+5)/2) + 1 \,} \Big) \, =\,  2 \, . 
\end{split}
\end{equation}
Observe now that the Laplace transform $\, \widehat{\bm \pi} (x, 0) \,$  of the marginal stationary distribution of $\, G(T)\,$ (and hence of $\,H(T)\,$, because of the symmetry) is determined from \eqref{eq:funceq} by the Laplace transform $\, \widehat{\bm \pi} (x, x) \,$ along the diagonal, i.e., 
\begin{equation*}
\mathbb E ^{\bm \pi} [ e^{-x G(T)} ] \, =\, \widehat{\bm \pi} ( x, 0) \, =\,  \widehat{\bm \pi} ( 0 , x) \, =\,  \frac{\,\lambda \,}{\,x + \lambda \,} [  2 - \widehat{\bm \pi} (x, x ) ] \, ; \quad x \ge 0 \, ; 
\end{equation*}
cf.\ (2.73) in \cite{ichiba_karatzas_degenerate_22-arxiv}. It follows from the exponential moment \eqref{eq: App4} that $\, G(T) \,$ has positive stationary density 
\begin{equation}
\label{eq:density_G_sym}
\mathbb P^{\bm \pi} ( G(T) \in {\mathrm d} u) \, =\,  \lambda e^{-\lambda u} \Big( 2 - \int^{u}_{0} e^{\lambda z} {\bm \sigma}(z) {\mathrm d} z \Big) {\mathrm d} u \, =\,   \lambda e^{-\lambda u} \, \Big(  \int^{\infty}_{u} e^{\lambda z} {\bm \sigma}(z) {\mathrm d} z \Big) \, {\mathrm d} u \, 
\end{equation}
for $u \ge 0 $. In particular, the stationary ``survival function'' of $G(T)$ is 
\begin{align*}
\mathbb P^{\bm \pi} ( G(T) \ge u) &= \int^\infty_u \Big ( \lambda e^{-\lambda v} \int^\infty_v e^{\lambda z} {\bm \sigma} (z) \mathrm d z \Big) \mathrm d v = \int^\infty_u ( e^{-\lambda (u - z)} - 1) {\bm \sigma} (z) \mathrm d z  \\
&= \mathbb E^{\bm \pi} \Big [ ( e^{\lambda (G(T) + H(T) - u)} - 1) \cdot {\bf 1}_{\{G(T) + H(T) > u \}} \Big]\, ; \quad u \ge 0 \, . 
\end{align*}

As will be proved in the following result, the distributions of $G(T)+H(T)$ and $2G(T)+H(T)$ are both given by those of infinite sums $\sum {\bm \varepsilon}_{k}$ of exponential variables, the only difference being that in the first case the sum runs over $ k\in \mathbb N$, while in the second case the index $0$ should be added.
\begin{proposition}
\label{prop:distribution_2G+H}
    Assume \eqref{eq:symcond}. Under the stationary distribution ${\bm \pi}$, the probability density function of the sum $\,2G(T)+H(T)\,$ (and of $\,G(T)+2H(T)\,$ by symmetry reasons)\ is that of the infinite sum $\, \sum_{k=0}^{\infty} {\bm \varepsilon}_{k}\,$ of independent exponential random variables $\, \{ {\bm \varepsilon}_{k}\}_{k \in \mathbb N} \,$ with respective  parameters $\, {\bm \ell}_{k}\,$ given by \eqref{eq: App2}.
\end{proposition}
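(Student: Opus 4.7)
The plan is to compute the Laplace transform $\mathbb{E}^{\bm\pi}[e^{-y(2G(T)+H(T))}] = \hatpi(2y,y)$ directly from the basic adjoint relationship \eqref{eq:funceq} and identify it with the Laplace transform of the claimed infinite convolution $\sum_{k=0}^{\infty} {\bm\varepsilon}_k$.

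First I would specialize \eqref{eq:funceq} at $(x,y) \mapsto (2y,y)$ under the symmetric assumption $\lambda_1 = \lambda_2 = \lambda$. The crucial feature of this substitution is that the coefficient $(y - x/2)$ of $\hatnu_2(x)$ vanishes identically at $x = 2y$, so the right-hand side collapses to a multiple of $\hatnu_1(y)$ alone. A short calculation gives $K(2y,y) = y(y+3\lambda)$ and $(2y - y/2)\hatnu_1(y) = (3y/2)\hatnu_1(y)$, leading to
\begin{equation*}
   \hatpi(2y,y) \, =\, \frac{\,3\,\hatnu_1(y)\,}{\,2(y+3\lambda)\,}.
\end{equation*}

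Next, I would insert the product representation
\begin{equation*}
   \hatnu_1(y) \, =\, 2\lambda \prod_{k=1}^{\infty} \frac{\,{\bm\ell}_k\,}{\,y+{\bm\ell}_k\,}
\end{equation*}
already established inside the proof of Theorem~\ref{thm:sum_exp_sum}\ref{thm4.1} (see \eqref{eq: App1}), obtaining
\begin{equation*}
   \hatpi(2y,y) \, =\, \frac{\,3\lambda\,}{\,y+3\lambda\,} \prod_{k=1}^{\infty} \frac{\,{\bm\ell}_k\,}{\,y+{\bm\ell}_k\,}.
\end{equation*}
The key observation is that ${\bm\ell}_0 = (\lambda/2)(0+2)(0+3) = 3\lambda$, so the prefactor $3\lambda/(y+3\lambda)$ is exactly the ``missing'' $k=0$ factor ${\bm\ell}_0/(y+{\bm\ell}_0)$ needed to extend the product to $k\geq 0$. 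Hence $\hatpi(2y,y) = \prod_{k=0}^{\infty} {\bm\ell}_k/(y+{\bm\ell}_k)$, which by independence is precisely the Laplace transform of $\sum_{k=0}^{\infty} {\bm\varepsilon}_k$, and uniqueness of Laplace transforms identifies the two distributions.

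The corresponding assertion for $G(T)+2H(T)$ follows from the symmetry $(G,H) \overset{d}{=} (H,G)$ under ${\bm\pi}$ in the regime $\lambda_1 = \lambda_2$, which is itself an immediate consequence of the invariance of \eqref{eq:funceq} under the simultaneous exchange $x \leftrightarrow y$ and $\hatnu_1 \leftrightarrow \hatnu_2$. I do not anticipate any serious obstacle; the only mild point to verify is the almost-sure finiteness of $\sum_{k=0}^{\infty} {\bm\varepsilon}_k$, which follows from $\sum_{k=0}^{\infty} 1/{\bm\ell}_k < \infty$ (the ${\bm\ell}_k$ grow quadratically in $k$), exactly as in the proof of Theorem~\ref{thm:sum_exp_sum}\ref{thm4.1}. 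The entire argument is thus a one-line consequence of the functional equation once the product form of $\hatnu_1$ is in hand; the non-trivial content is the fortunate vanishing of the $\hatnu_2$-coefficient at $x=2y$ and the exact matching of the leftover factor with ${\bm\ell}_0$.
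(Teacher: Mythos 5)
Your proof is correct and is essentially the same as the paper's: evaluate the BAR~\eqref{eq:funceq} at $(2y,y)$ to get $\hatpi(2y,y)=\tfrac{3}{2}\hatnu_1(y)/(y+3\lambda)$, then insert the product form \eqref{eq: App1} and observe that $3\lambda = {\bm\ell}_0$ supplies the missing $k=0$ factor. The extra details you supply (the vanishing of the $\hatnu_2$-coefficient at $x=2y$, the symmetry argument for $G+2H$, and the finiteness of $\sum 1/{\bm\ell}_k$) are all sound and implicit in the paper's terser presentation.
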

\begin{proof}
Evaluating the functional equation \eqref{eq:funceq} at $(2y,y)$, we immediately obtain
\begin{equation}
\label{eq:Lap_trans_(2y,y)}
     \mathbb{E}^\boldpi \left( e^{-y(2G(T)+H(T))} \right) = \hatpi (2y,y) = \frac{3}{2}\frac{\hatnu_1(y)}{y+3\lambda}.
\end{equation}
Using \eqref{eq: App1} and noticing with \eqref{eq: App2} that  ${\bm \ell}_{0}=3\lambda$, we obtain the announced result.
\end{proof}

\subsection{The non-symmetric case}

Results involving infinite sums of random variables are also shown in the non-symmetric case. We prove the second part of Theorem~\ref{thm:sum_exp_sum}.

Observe that Theorem~\ref{thm:sum_exp_sum} \ref{thm4.2} is an extension of \eqref{eq: App1} to the non-symmetric case. Indeed, specializing $\lambda_1=\lambda_2$ and thus $\mu_i=\frac{1}{2}$, we immediately obtain via \eqref{eq:equality_orbits} and \eqref{eq: App2} and  that the parameters $k(k+\mu_i)(\lambda_1+\lambda_2)$, for $k\in\mathbb Z\setminus \{-1,0,1\}$, reduce to the ${\bm \ell}_{k}$, for $k\geq 1$. 

\begin{proof}[Proof of Theorem~\ref{thm:sum_exp_sum}\ref{thm4.2}]
We prove that
\begin{equation}
\label{eq:dist_nu_i_inf_conv}
    \,\widehat{\bm \nu}_{1}(y)\, \, =\,  \frac{2}{3}(2\lambda_1+\lambda_2)\prod_{k\in\mathbb Z\setminus \{-1,0,1\}} \frac{k(k+\mu_1)(\lambda_1+\lambda_2)}{y + k(k+\mu_1)(\lambda_1+\lambda_2)}  \, ; \quad y \in [0, \infty). 
\end{equation}
(Note that the prefactor $\frac{2}{3}(2\lambda_1+\lambda_2)$ simply corresponds to the value of $\hatnu_1(0)$ already obtained in \eqref{eq:value_hatnu_1(0)}.)

Let us start from the identity \eqref{eq:expression_hatnu_1_nonsym} proved in Theorem~\ref{thm:explicit_general}.
We have the Weierstrass factorization
\begin{equation*}
    \cos z-\cos z_0 = (1-\cos z_0) \prod_{k\in\mathbb Z}\left(1-\Bigl(\frac{z}{z_0+2k\pi}\Bigr)^2\right),
\end{equation*}
which extends \eqref{eq:cos_infinite_product_formula}.
In particular,
\begin{equation*}
    \cos \bigl(\pi\sqrt{\mu_1^2-4y}\bigr) - \cos(\pi\mu_1) = (1-\cos(\pi\mu_1)) \prod_{k\in\mathbb Z} \frac{y+k(k+\mu_1)}{(k+\frac{\mu_1}{2})^2}.
\end{equation*}
This implies that the quantity appearing in \eqref{eq:expression_hatnu_1_nonsym} (after replacing $y$ by $(\lambda_1+\lambda_2)y$)\ is equal to
\begin{equation*}
    \frac{y(y+1+\mu_1)(y+1-\mu_1)}{\cos \bigl(\pi\sqrt{\mu_1^2-4y}\bigr) - \cos(\pi\mu_1)} = \frac{\bigl((\frac{\mu_1}{2}-1)\frac{\mu_1}{2}(\frac{\mu_1}{2}+1)\bigr)^2}{1-\cos(\pi\mu_1)}\prod_{k\in\mathbb Z\setminus \{-1,0,1\}} \frac{(k+\frac{\mu_1}{2})^2}{y+k(k+\mu_1)}.
\end{equation*}
Using the fact that 
\begin{equation*}
    \prod_{k\in\mathbb Z\setminus \{-1,0,1\}} \frac{(k+\frac{\mu_1}{2})^2}{k(k+\mu_1)} = \frac{1-\cos(\pi\mu_1)}{\pi\sin(\pi\mu_1)} \frac{2(1-\mu_1^2)}{(\frac{\mu_1}{2}-1)^2\mu_1(\frac{\mu_1}{2}+1)^2},
\end{equation*}
we obtain that 
\begin{equation*}
    \frac{y(y+1+\mu_1)(y+1-\mu_1)}{\cos \bigl(\pi\sqrt{\mu_1^2-4y}\bigr) - \cos(\pi\mu_1)} = \frac{\frac{\mu_1}{2}(1-\mu_1^2)}{\pi\sin(\pi\mu_1)}\prod_{k\in\mathbb Z\setminus \{-1,0,1\}} \frac{k(k+\mu_1)}{y+k(k+\mu_1)}.
\end{equation*}
Plugging the above identity in \eqref{eq:expression_hatnu_1_nonsym}, we obtain
\begin{equation*}
    \hatnu_1({y})=\frac{4\pi}{3\lambda_1\lambda_2}\sin(\pi\mu_1)(\lambda_1+\lambda_2)^3\frac{\frac{\mu_1}{2}(1-\mu_1^2)}{\pi\sin(\pi\mu_1)}\prod_{k\in\mathbb Z\setminus \{-1,0,1\}} \frac{k(k+\mu_1)(\lambda_1+\lambda_2)}{y+k(k+\mu_1)(\lambda_1+\lambda_2)},
\end{equation*}
which after simplification coincides exactly with \eqref{eq:dist_nu_i_inf_conv}.
\end{proof}

As a direct consequence of Theorem~\ref{thm:sum_exp_sum} \ref{thm4.2}, one has a generalization of Proposition~\ref{prop:distribution_2G+H} to the non-symmetric case:
\begin{equation*}
     \mathbb{E}^\boldpi \left( e^{-y(2G+H)} \right) = \hatpi (2y,y) = \frac{3}{2}\frac{\hatnu_1(y)}{y+2\lambda_1+\lambda_2}=\prod_{k\in\mathbb Z\setminus \{-1,0\}} \frac{k(k+\mu_1)(\lambda_1+\lambda_2)}{y + k(k+\mu_1)(\lambda_1+\lambda_2)}.
\end{equation*}

It is also possible to deduce a result like \eqref{eq:density_G_sym} in the non-symmetric case. Indeed, it follows from the Laplace transforms 
\begin{equation*}
\begin{split}
    \mathbb E ^{\bm \pi} [ e^{-x G(T)} ] &=\hatpi(x,0) = \frac{\frac{2}{3}(2\lambda_1+\lambda_2)-\frac{\hatnu_2(x)}{2}}{x+\lambda_1}\,; \quad x \ge 0 \, ,  \\
    \mathbb E ^{\bm \pi} [ e^{-y H(T)} ] &=\hatpi(0,y) = \frac{\frac{2}{3}(\lambda_1+2\lambda_2)-\frac{\hatnu_1(y)}{2}}{y+\lambda_2} \, ; \quad y \ge 0 \, ,
\end{split}
\end{equation*}
that we can obtain the marginal density functions by the inverse Laplace transforms
\begin{equation} \label{eq: marginalpdfGH}
\begin{split}
\mathbb P ^{\bm \pi} ( G(T) \in \mathrm d u) & = 
 \frac{1}{3} (\lambda_1 + \lambda_2) e^{-\lambda_1 u} \Big(  2 (1 + \mu_1) - 
 (1+\mu_2) \int^u_0 e^{\lambda_1 z} {\widetilde{\nu}_2} (z) \mathrm d z \Big) \mathrm d u  \, , \\
 \mathbb P ^{\bm \pi} ( H(T) \in \mathrm d u) & = 
\frac{1}{3} (\lambda_1 + \lambda_2) e^{-\lambda_2 u} \Big(  2 (1 + \mu_2)  - 
 (1+\mu_1) \int^u_0 e^{\lambda_2 z} {\widetilde{\nu}_1} (z) \mathrm d z \Big) \mathrm d u \, 
\end{split}
\end{equation}
for $\, u \ge 0\, $, with $\, \mu_i = \lambda_i / (\lambda_1 + \lambda_2)\, $, $i=1, 2$ as in \eqref{eq:def_mu}. Here, $\widetilde{\nu}_i$ is the probability density function  of the probability measure $\, ( 2 (\lambda_1 + \lambda_2)(1+\mu_i)/3) \, {\bm \nu}_i (\cdot)\, $ on the positive real line for $i =1, 2$ in Theorem~\ref{thm:sum_exp_sum} \ref{thm4.2}. Note that the right-hand formulas in \eqref{eq: marginalpdfGH} are positive for $u \ge 0$, because for the first formula in \eqref{eq: marginalpdfGH}, as in \eqref{eq: App4}, we have 
\begin{equation*}
\begin{split}
\int^\infty_0 e^{\lambda_1 z} \widetilde{\nu}_2 (z) \mathrm d z &= \prod_{k \in \mathbb Z \setminus\{-1, 0, 1\}} \frac{k(k+\mu_2)(\lambda_1+\lambda_2)}{k(k+\mu_2)(\lambda_1 + \lambda_2) - \lambda_1}\\
&= \prod_{k \in \mathbb Z \setminus\{-1, 0, 1\}} \frac{k(k+\mu_2)}{k(k+\mu_2) - \mu_1}  = \frac{2(1+\mu_1)}{1+\mu_2} \, . 
\end{split}
\end{equation*}
We used here the relation $\mu_1+\mu_2=1$, and the telescopic structure
\begin{equation*}
    \frac{k(k+\mu_2)}{k(k+\mu_2) - \mu_1} = \frac{k}{k+1} \frac{k+1-\mu_1}{k-\mu_1} \, ; \quad k \in \mathbb Z \setminus \{-1, 0, 1\} 
\end{equation*}
in the last equality.

\section{Bivariate density via the compensation approach}
\label{sec:compensation}

\subsection{A PDE satisfied by the stationary distribution}
Let us denote by $\pi(u,v)$ the density of the invariant measure, and recall the parameters $\lambda_1,\lambda_2$ introduced in~\eqref{eq:def_lambda_1_2}.
In the manner of \cite[(8.5)]{harrison_reiman_distribution_81}, we may state the following partial differential equation (PDE) satisfied by this probability density function:
\begin{equation}
\label{eq:edp}
\begin{cases}
\mathcal{G}^* \pi (u,v) =0 & \text{for } (u,v)\in\mathbb{R}_+^2,
\\
\partial_{R^*_1}\pi (0,v) +2\lambda_1 \pi (0,v) =0
& \text{for } v\in\mathbb{R}_+,
\\
\partial_{R^*_2}\pi (u,0) +2\lambda_2 \pi (u,0) =0
& \text{for } u\in\mathbb{R}_+,
\end{cases}
\end{equation}
where we denote
\begin{equation}
\label{eq:dual_operators}
\left\{\begin{array}{lcl}
   \mathcal{G}^*&=&\displaystyle \left(\frac{\partial}{\partial x}-\frac{\partial}{\partial y}\right)^2+\lambda_1\frac{\partial}{\partial x}+\lambda_2\frac{\partial}{\partial y},\medskip\\
 R&=&\displaystyle\begin{pmatrix}
1 & -\frac{1}{2} \\ 
-\frac{1}{2} & 1
\end{pmatrix},\medskip\\
R^*&=&\displaystyle2\Sigma-R \text{diag}(R)^{-1}\text{diag}(\Sigma)
=\begin{pmatrix}
2 & -3 \\ 
-3 & 2
\end{pmatrix},
\\\Sigma&=&\medskip \displaystyle\begin{pmatrix}
2 & -2 \\ 
-2 & 2
\end{pmatrix}.
\end{array}\right.
 \end{equation}

\subsection{The Compensation Approach: Basic Principle, Questions} 
 
Using our notations \eqref{eq:edp}--\eqref{eq:dual_operators}, let us introduce the sets of functions
\begin{equation*}
\left\{\begin{array}{rcl}
H_0&=&\displaystyle \{f \in \mathcal{C}^2(\mathbb{R}_+^2): \mathcal{G}^* f  =0 \},\smallskip\\
H_1&=&\displaystyle H_0 \cap \{f \in \mathcal{C}^2(\mathbb{R}_+^2):
\partial_{R^*_1}f (0,\cdot ) +2\lambda_1 f (0,\cdot) =0 \},\smallskip\\
H_2&=&\displaystyle H_0 \cap \{f \in \mathcal{C}^2(\mathbb{R}_+^2): 
\partial_{R^*_2}f (\cdot,0) +2\lambda_2 f (\cdot,0) =0 \},
\end{array}\right.
\end{equation*}
and observe that the requirement $\pi\in H_1\cap H_2$ is equivalent to the system \eqref{eq:edp}. 

We look now for exponential functions in $H_0$, $H_1$ and $H_2$. We have 
\begin{align}
e^{-au-bv}\in H_0
&\Longleftrightarrow
K^*(a,b):=(a-b)^2-\lambda_1a-\lambda_2b=0
\label{eq:K*}
\\ &\Longleftrightarrow
(a,b)\in \mathcal{P}^*:= \{(x,y)\in\mathbb{R}^2: (x-y)^2-\lambda_1x-\lambda_2y=0 \}.
\label{eq:P*}
\end{align}
Similarly, 
\begin{align}
\nonumber
e^{-au-bv}\in H_1 &\Longleftrightarrow
(a-b)^2-\lambda_1a-\lambda_2b=0
\quad\text{and}\quad
2a-3b-2\lambda_1=0
\\ \nonumber&\Longleftrightarrow
(a,b)\in\mathcal{P}^*\cap \mathcal{L}_1, \text{ where } 
\mathcal{L}_1:=\{(x,y)\in\mathbb{R}^2: 2x-3y-2\lambda_1=0 \}
\\ &\label{eq:a0b0}\Longleftrightarrow
(a,b)=(\lambda_1,0)
\quad\text{or}\quad
(a,b)=(a_0,b_0):=(4\lambda_1+
6\lambda_2,2\lambda_1+4\lambda_2),
\end{align}
and symmetrically
\begin{align}
\nonumber
e^{-a'u-b'v}\in H_2 &\Longleftrightarrow
(a'-b')^2-\lambda_1a'-\lambda_2b'=0
\quad\text{and}\quad
2b'-3a'-2\lambda_2=0
\\ &\Longleftrightarrow
(a',b')\in\mathcal{P}^*\cap \mathcal{L}_2, \text{ with } 
\mathcal{L}_2:=\{(x,y)\in\mathbb{R}^2: 2y-3x-2\lambda_2=0 \}
\label{eq:L2}
\\\nonumber &\Longleftrightarrow
(a',b')=(0,\lambda_2)
\quad\text{or}\quad
(a',b')=(a'_0,b'_0):=(4\lambda_1+
2\lambda_2,6\lambda_1+
4\lambda_2).
\end{align}
The parabola $\mathcal{P}^*$ in \eqref{eq:P*}, the lines $\mathcal{L}_1$ and $\mathcal{L}_2$ in~\eqref{eq:a0b0}--\eqref{eq:L2}, the points $(a_0,b_0)$ and $(a'_0,b'_0)$ defined above, can be visualized on Figure~\ref{fig:compensation}.

 The main idea of the compensation approach \cite{adan_wessels_zijm_compensation_93} is to start with an exponential function in $H_1$ (resp.\ $H_2$) and to add another exponential, so that the sum of the two terms belongs to $H_2$ (resp.\ $H_1$). This step is the first compensation, but this sum of two functions is still not in $H_1$ (resp.\ $H_2$). Therefore, we have to compensate again with another exponential term, and so on. We eventually compensate with an infinite sum of exponential functions in such a way that the final sum be in $H_1\cap H_2$. The following equation is a visualization of this approach:
\begin{equation}
\label{eq:value_p(u,v)}
   p(u,v):=\overunderbraces{&\br{1}{\in H_1}& &\br{3}{\in H_1}& &\br{3}{\in H_1}}%
  {&c_0e^{-a_0u-b_0v} &+& c_1e^{-a_1u-b_1v}  &+&c_2e^{-a_2u-b_2v} &+&c_3e^{-a_3u-b_3v} &+ & c_4e^{-a_4u-b_4v} &+ \cdots}%
  {& \br{3}{\in H_2} & &\br{3}{\in H_2}}.
\end{equation}
A symmetric construction holds for a first term in $H_2$:
\begin{equation}
\label{eq:value_pp(u,v)}
   p'(u,v):=\overunderbraces{&\br{3}{\in H_1}& &\br{3}{\in H_1}}%
  {&c'_0e^{-a'_0u-b'_0v} &+& c'_1e^{-a'_1u-b'_1v}  &+&c'_2e^{-a'_2u-b'_2v} &+&c'_3e^{-a'_3u-b'_3v} &+ & c'_4e^{-a'_4u-b'_4v} &+ \cdots}%
  {& \br{1}{\in H_2} & &\br{3}{\in H_2} & &\br{3}{\in H_2}}.
\end{equation}
This approach raises several questions, which we will answer in the remainder of the paper:
\begin{itemize}
    \item What are the explicit values of the constants $a_n$, $b_n$ and $c_n$ in \eqref{eq:value_p(u,v)}? (And a symmetric question for \eqref{eq:value_pp(u,v)}.) We will answer this question in Section~\ref{subsec:computation_constants}.
    \item For any values of $C$ and $C'$, the linear combination
    \begin{equation}
\label{eq:convex_combin_ppp}
   Cp(u,v)+C'p'(u,v)
\end{equation}
is a solution to the PDE \eqref{eq:edp}. Is it possible to find the invariant distribution among these infinitely-many solutions? If yes, how to adjust the constants $C$ and $C'$ so as to find the unique invariant distribution? We will answer this question in Section~\ref{subsec:computation_combination}.
\end{itemize}

\begin{remark}[Starting points of the sequences $(a_n)_{n\geq0}$ and $(b_n)_{n\geq0}$ of the compensation procedure]

Although the point $(\lambda_1,0)\in \mathcal{P}^*\cap \mathcal{L}_1$ is formally a solution to \eqref{eq:a0b0}, it is not possible to choose it as a starting point of the compensation approach. Indeed, the exponential $e^{\lambda_1u}$ does not converge to $0$ when $v\to\infty$ and $u=0$. The procedure must thus be initialized at $(a_0,b_0)\in \mathcal{P}^*\cap \mathcal{L}_1$, see again \eqref{eq:a0b0}. A similar remark applies to the point $(0,\lambda_2)$. 
\end{remark}

\begin{remark}[Skew symmetry]
The intersection $\mathcal{P}^*\cap\mathcal{L}_1\cap\mathcal{L}_2$ of the sets introduced in \eqref{eq:P*}, \eqref{eq:a0b0} and \eqref{eq:L2} is empty most of the time, except in the so-called skew symmetric case. For example, when
\begin{equation*}
R=\begin{pmatrix}
1 & -\frac{1}{2} \\ 
-\frac{3}{2} & 1
\end{pmatrix},
\end{equation*}
we have $\mathcal{P}^*\cap\mathcal{L}_1\cap\mathcal{L}_2=\{(3\lambda_2+2\lambda_1,2\lambda_2+\lambda_1)\}$ and we find again \cite[Eq.~(A.22)]{ichiba_karatzas_degenerate_22}
\begin{equation*}
\pi(u,v)=(3\lambda_2+2\lambda_1)(2\lambda_2+\lambda_1)e^{-(3\lambda_2+2\lambda_1)u-(2\lambda_2+\lambda_1)v}.
\end{equation*}
\end{remark}

\subsection{Computation of the compensation constants}
\label{subsec:computation_constants}

We now obtain explicit formulas for the sequences $(a_n)_{n\geq 0}$, $(b_n)_{n\geq 0}$ and $(c_n)_{n\geq 0}$ appearing in~\eqref{eq:value_p(u,v)} (and Theorem~\ref{thm:maindensity}). We will prove the following:
\begin{proposition}
\label{prop:values_an_bn}
With $(a_0,b_0)$ determined in~\eqref{eq:a0b0} the sequence defined by, for all $n\geq0$,
\begin{equation}
\label{eq:values_(a_n,b_n)}
\begin{cases}
    a_{2n}=a_0+2n(a_0-b_0)+n^2\lambda_1+n(n+1)\lambda_2 ,
    \\
    b_{2n}=b_0+2n(a_0-b_0)+n(n-1)\lambda_1+n^2\lambda_2 ,
\end{cases}
\quad\text{and}\quad
(a_{2n+1},b_{2n+1})= (a_{2n},b_{2n+2})
\end{equation}
satisfies the following statement:
\begin{equation*}
   e^{-a_n u -b_n v}\in H_0, \quad \forall \ n\in \mathbb{N}.
   \end{equation*}
\end{proposition}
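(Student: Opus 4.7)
The claim $e^{-a_n u - b_n v} \in H_0$ is equivalent to $K^*(a_n, b_n) = 0$, i.e., to verifying that each point $(a_n, b_n)$ lies on the parabola $\mathcal{P}^*$ defined in \eqref{eq:P*}. My plan is to recognize the compensation mechanism as a pair of Vieta-type reflections on $\mathcal{P}^*$, and to check that the explicit closed-form sequences in \eqref{eq:values_(a_n,b_n)} realize these reflections starting from $(a_0,b_0)$, which is on $\mathcal{P}^*$ by \eqref{eq:a0b0}.

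The key observation is this: for fixed $a \in \mathbb{R}$, the equation $K^*(a,y) = 0$ is quadratic in $y$, namely $y^2 - (2a + \lambda_2) y + (a^2 - \lambda_1 a) = 0$, so by Vieta its two roots sum to $2a + \lambda_2$. Symmetrically, $K^*(x,b)=0$ is quadratic in $x$ with roots summing to $2b + \lambda_1$. Thus, if $(a,b) \in \mathcal{P}^*$, then the "vertical reflection" $(a, 2a + \lambda_2 - b)$ and the "horizontal reflection" $(2b + \lambda_1 - a, b)$ also lie on $\mathcal{P}^*$. This matches exactly the compensation rule: to add an exponential that cancels the one we started with on the boundary $v=0$ (resp.\ $u=0$) we must preserve the $u$-exponent (resp.\ $v$-exponent), and the only other admissible value of the other exponent is the one given by Vieta.

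The proof then proceeds by induction on $n$. The base case $n=0$ is exactly \eqref{eq:a0b0}. For the inductive step, assuming $(a_{2n}, b_{2n}) \in \mathcal{P}^*$, I verify from the explicit formulas that
\begin{equation*}
b_{2n+1} \, = \, b_{2n+2} \, = \, 2a_{2n} + \lambda_2 - b_{2n},
\end{equation*}
which is a direct (and short) algebraic computation using $a_{2n+1} = a_{2n}$. By the Vieta observation, this places $(a_{2n+1}, b_{2n+1}) = (a_{2n}, b_{2n+2}) $ on $\mathcal{P}^*$. Symmetrically, I verify
\begin{equation*}
a_{2n+2} \, = \, 2 b_{2n+2} + \lambda_1 - a_{2n+1},
\end{equation*}
again a short computation using the explicit formulas, which places $(a_{2n+2}, b_{2n+2})$ on $\mathcal{P}^*$ by the horizontal reflection.

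I expect no real obstacle here: the work is the bookkeeping that the closed-form expressions \eqref{eq:values_(a_n,b_n)}, which are polynomial in $n$ of degree $2$ in the $\lambda_i$-coefficients, are consistent with the pair of linear recurrences $(a_{2n+1}, b_{2n+1}) = (a_{2n}, 2a_{2n}+\lambda_2-b_{2n})$ and $(a_{2n+2}, b_{2n+2}) = (2b_{2n+1}+\lambda_1-a_{2n+1}, b_{2n+1})$. Concretely, one substitutes the formulas and checks that the quadratic-in-$n$ terms in $\lambda_1$, $\lambda_2$, and $(a_0-b_0)$ match on both sides; the identity $2a_0 - b_0 - b_0 = 2(a_0 - b_0)$ accounts for the shift that turns the index $n$ into $n+1$. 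As a sanity check, the compatibility already observed between the recurrences and the closed forms at the two end-indices $n=0$ and general $n$ essentially forces the correctness, since the recurrences are affine in $(a,b)$ with the correct step.
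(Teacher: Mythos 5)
Your proof is correct and follows essentially the same route as the paper's. Your ``vertical'' and ``horizontal'' Vieta reflections are precisely the automorphisms $\zeta(x,y)=(x,\,2x-y+\lambda_2)$ and $\eta(x,y)=(2y-x+\lambda_1,\,y)$ the paper introduces, and your inductive verification is just the paper's recurrence $(a_{2n+2},b_{2n+2})=\eta\zeta(a_{2n},b_{2n})$ split into its two factors $\zeta$ and $\eta$; the algebra you outline checks out against the closed forms in \eqref{eq:values_(a_n,b_n)}.
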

\begin{proof}
As explained in~\eqref{eq:K*}, $e^{-a_n u -b_n v}\in H_0$ if and only if $K^*(a,b)=0$.  With a few simple but tedious computation, we can verify that for all $n\in \mathbb{N}$ we have $K^*(a_n,b_n)=0$ which concludes the proof.

    We also give a more constructive procedure that enabled us to determine these sequences. We need to introduce $\zeta$ and $\eta$ two \emph{automorphisms} of the parabola $\mathcal{P}^*$ in~\eqref{eq:P*}, defined by
\begin{equation*}
\zeta (x,y)= (x,2x-y+\lambda_2)
\quad \text{and} \quad
\eta (x,y) = (2y-x+\lambda_1,y).
\end{equation*}
By construction, these satisfy, for $(x,y)\in \mathcal{P}^*$ such that $K^*(x,y)=0$, that $\zeta(x,y)\in \mathcal{P}^*$ and $\eta(x,y)\in \mathcal{P}^*$, i.e.
\begin{equation*}
K^*\bigl(\zeta(x,y)\bigr)=K^*\bigl(\eta(x,y)\bigr)=0.
\end{equation*}
These automorphisms can be visualized on Figure~\ref{fig:compensation}. They allow us to define recursively the sequences $(a_n)_{n\geq 0}$ and $(b_n)_{n\geq 0}$:
\begin{equation*}
\left\{\begin{array}{rcl}
(a_{2n},b_{2n})&=& (\eta\zeta)^n(a_{0},b_0),\\
(a_{2n+1},b_{2n+1})&=& \zeta(\eta\zeta)^n(a_{0},b_0)= (a_{2n},b_{2n+2}).
\end{array}\right.
\end{equation*}
We have $\eta\zeta(x,y)=(3x-2y+2\lambda_2+\lambda_1,2x-y+\lambda_2)$; and a straightforward computation allows us to verify the recurrence relation
\begin{equation*}
   \eta\zeta (a_{2n},b_{2n})=(3a_{2n}-2b_{2n}+2\lambda_2+\lambda_1,2a_{2n}-b_{2n}+\lambda_2)=(a_{2n+2},b_{2n+2}),
\end{equation*}
which proves~\eqref{eq:values_(a_n,b_n)}.
\end{proof}

\begin{proposition}
\label{prop:values_cn}
The sequence $(c_n)_{n\geq0}$ defined by induction as follows: $c_0=1$ and for $n\geq0$,
\begin{equation}
\label{eq:values_(c_n)}
\left\{\begin{array}{rcl}
c_{2n+1}&=&\displaystyle -c_{2n}\frac{3a_{2n}-2b_{2n}+2\lambda_2}{3a_{2n+1}-2b_{2n+1}+2\lambda_2},\medskip\\
c_{2n+2}&=&\displaystyle -c_{2n+1}\frac{-2a_{2n+1}+3b_{2n+1}+2\lambda_1}{-2a_{2n+2}+3b_{2n+2}+2\lambda_1},
\end{array}\right.
\end{equation}
satisfies the compensation approach of~\eqref{eq:value_p(u,v)}, i.e.\ for all $n \in \mathbb{N}$ we have
\begin{equation*}
\begin{cases}
    c_{2n}e^{-a_{2n} u -b_{2n} v} + c_{2n+1}e^{-a_{2n+1} u -b_{2n+1} v} \in H_2,
\\
c_{2n+1}e^{-a_{2n+1} u -b_{2n+1} v}+c_{2n+2}e^{-a_{2n+2} u -b_{2n+2} v} \in H_1.
\end{cases}
\end{equation*}
\end{proposition}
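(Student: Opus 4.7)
The plan is a direct verification. Since Proposition~\ref{prop:values_an_bn} already guarantees $e^{-a_nu-b_nv}\in H_0$ for every $n$, and $H_0$ is a vector space, any finite linear combination of these exponentials automatically lies in $H_0$. It therefore suffices to check the boundary conditions defining $H_2$ and $H_1$.

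First I would compute the action of the boundary operators on a single exponential. Reading $R_1^*=(2,-3)$ and $R_2^*=(-3,2)$ from \eqref{eq:dual_operators}, a direct computation gives $\partial_{R_1^*}e^{-au-bv}=(-2a+3b)e^{-au-bv}$ and $\partial_{R_2^*}e^{-au-bv}=(3a-2b)e^{-au-bv}$. Hence the $H_2$-condition applied to $c_{2n}e^{-a_{2n}u-b_{2n}v}+c_{2n+1}e^{-a_{2n+1}u-b_{2n+1}v}$, specialised at $v=0$, reads
\begin{equation*}
c_{2n}(3a_{2n}-2b_{2n}+2\lambda_2)e^{-a_{2n}u}+c_{2n+1}(3a_{2n+1}-2b_{2n+1}+2\lambda_2)e^{-a_{2n+1}u}\equiv 0,
\end{equation*}
and analogously the $H_1$-condition on the pair $(2n+1,2n+2)$ at $u=0$ produces a two-term identity involving $e^{-b_{2n+1}v}$ and $e^{-b_{2n+2}v}$.

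The decisive point is the built-in coincidence of exponents enforced by \eqref{eq:values_(a_n,b_n)}: namely $a_{2n+1}=a_{2n}$ and $b_{2n+1}=b_{2n+2}$. This is precisely what makes the two exponentials in each identity above merge into one, so the identity collapses to a single scalar equation in $(c_{2n},c_{2n+1})$, resp.\ $(c_{2n+1},c_{2n+2})$, whose unique solution is given by the first, resp.\ second, formula of \eqref{eq:values_(c_n)}. The only residual subtlety is that the denominators in \eqref{eq:values_(c_n)} do not vanish: a zero denominator would force $(a_{2n+1},b_{2n+1})\in\mathcal{P}^*\cap\mathcal{L}_2$ or $(a_{2n+2},b_{2n+2})\in\mathcal{P}^*\cap\mathcal{L}_1$, i.e.\ coincidence with one of the four explicit points listed in \eqref{eq:a0b0} and \eqref{eq:L2}, and this is ruled out for $n\ge 0$ by the strictly quadratic growth of $(a_n,b_n)$ in \eqref{eq:values_(a_n,b_n)} under the assumption \eqref{eq:conddelta}. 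This short algebraic check is the only mildly non-routine ingredient; everything else is immediate from Proposition~\ref{prop:values_an_bn} and the definitions of $H_0,H_1,H_2$.
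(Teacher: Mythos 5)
Your argument is correct and is essentially the same direct verification the paper gives: both rely on the coincidences $a_{2n+1}=a_{2n}$ and $b_{2n+1}=b_{2n+2}$ from Proposition~\ref{prop:values_an_bn} to reduce each boundary condition on the $v=0$ (resp.\ $u=0$) axis to a single scalar identity that is precisely \eqref{eq:values_(c_n)}, using that $H_0$ is closed under linear combinations. The only addition you make beyond the paper's proof is the explicit remark that the denominators in \eqref{eq:values_(c_n)} are non-zero (identifying a vanishing denominator with membership in $\mathcal{P}^*\cap\mathcal{L}_2$ or $\mathcal{P}^*\cap\mathcal{L}_1$), a small but worthwhile sanity check that the paper leaves implicit.
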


\begin{proof}
For all $n \in \mathbb{N}$ let us define 
\begin{equation*}
    f_{2n}(u,v):=c_{2n}e^{-a_{2n} u -b_{2n} v} + c_{2n+1}e^{-a_{2n+1} u -b_{2n+1} v}.
\end{equation*}
Let us recall that 
\begin{equation*}
   H_2= H_0 \cap \{f \in \mathcal{C}^2(\mathbb{R}_+^2): 
\partial_{R^*_2}f (\cdot,0) +2\lambda_2 f (\cdot,0) =0 \}.
\end{equation*}
Proposition~\ref{prop:values_an_bn} implies that $f_{2n} \in H_0$.
We also have
\begin{multline*}
   \partial_{R^*_2}f_{2n}(u,0) +2\lambda_2 f_{2n} (u,0) =\\c_{2n}(3a_{2n}-2b_{2n}+2\lambda_2)e^{a_{2n}u}+c_{2n+1}(3a_{2n+1}-2b_{2n+1}+2\lambda_2)e^{a_{2n+1}u}
   =0,
\end{multline*}
remembering that $a_{2n}=a_{2n+1}$ and the relation~\eqref{eq:values_(c_n)}. Then $f_{2n}\in H_2$ and a similar reasoning shows that $f_{2n+1}(u,v):=c_{2n+1}e^{-a_{2n+1} u -b_{2n+1} v}+c_{2n+2}e^{-a_{2n+2} u -b_{2n+2} v} \in H_1$ and thus completes the proof.
\end{proof}

\begin{corollary}
\label{cor:values_polynomial_c_n}
The sequence $(c_n)_{n\geq0}$ is piecewise polynomial: if $n$ is even, then
\begin{equation}
\label{eq:values_c_n_even}
       c_n = (n + 2) (n + 4)^2(n + 6)
    \frac{(n + 2\mu_2 )(n + 2  + 2\mu_2 )(n + 4  + 2\mu_2 )(n + 6  + 2\mu_2 )}
{3072\mu_2(1+\mu_2   )(2+ \mu_2 )(3+ \mu_2 ) },\normalsize
\end{equation}
and if $n$ is odd
\begin{equation}
\label{eq:values_c_n_odd}
c_n = -(n + 1)(n + 3)(n + 5)(n + 7)
\frac{(n + 1 + 2\mu_2 )(n + 3  + 2\mu_2 )^2(n + 5  + 2\mu_2 )}{3072\mu_2(1+\mu_2   )(2+\mu_2 )(3+\mu_2 )}.
\end{equation}
As $n\to\infty$, 
\begin{equation}    \label{eq:asymptotics_c_n}
c_n \sim \frac{1}{3072\mu_2(1+\mu_2   )(2+\mu_2 )(3+\mu_2 ) }(-1)^nn^8.
\end{equation}
If $\lambda_1=\lambda_2$, then both \eqref{eq:values_c_n_even} and \eqref{eq:values_c_n_odd} reduce to
\begin{equation*}
    c_n =(-1)^n\frac{(n + 1)(n + 2)(n + 3)(n + 4)^2(n + 5)(n + 6)(n + 7)}{20160}= (-1)^n\binom{n+7}{7}\frac{n+4}{4}.
\end{equation*}
\end{corollary}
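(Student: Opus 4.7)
The plan is to turn the recurrence \eqref{eq:values_(c_n)} into a telescoping product by first showing that the numerators and denominators appearing in that recurrence admit pleasant factorizations. Substituting $(a_0,b_0)=(4\lambda_1+6\lambda_2,2\lambda_1+4\lambda_2)$ into \eqref{eq:values_(a_n,b_n)} one finds, after expanding,
\begin{equation*}
a_{2n}=(n+2)^2\lambda_1+(n+2)(n+3)\lambda_2,\qquad b_{2n}=(n+1)(n+2)\lambda_1+(n+2)^2\lambda_2,
\end{equation*}
together with $a_{2n+1}=a_{2n}$ and $b_{2n+1}=b_{2n+2}$.

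From these identities I would evaluate the four linear combinations that appear in \eqref{eq:values_(c_n)}. A direct computation shows
\begin{align*}
3a_{2n}-2b_{2n}+2\lambda_2&=(n+4)\bigl((n+2)\lambda_1+(n+3)\lambda_2\bigr),\\
3a_{2n+1}-2b_{2n+1}+2\lambda_2&=(n+2)\bigl(n\lambda_1+(n+1)\lambda_2\bigr),\\
-2a_{2n+1}+3b_{2n+1}+2\lambda_1&=(n+3)\bigl((n+4)\lambda_1+(n+5)\lambda_2\bigr),\\
-2a_{2n+2}+3b_{2n+2}+2\lambda_1&=(n+1)\bigl((n+2)\lambda_1+(n+3)\lambda_2\bigr),
\end{align*}
and denoting $\alpha_k:=k\lambda_1+(k+1)\lambda_2=(\lambda_1+\lambda_2)(k+\mu_2)$, the two defining ratios collapse to
\begin{equation*}
\frac{c_{2n+1}}{c_{2n}}=-\frac{(n+4)\,\alpha_{n+2}}{(n+2)\,\alpha_n},\qquad \frac{c_{2n+2}}{c_{2n+1}}=-\frac{(n+3)\,\alpha_{n+4}}{(n+1)\,\alpha_{n+2}}.
\end{equation*}
The main obstacle is precisely this miraculous factorization; once it is obtained, the rest is mechanical.

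Multiplying the two ratios gives a recurrence that telescopes:
\begin{equation*}
\frac{c_{2n+2}}{c_{2n}}=\frac{(n+3)(n+4)\,\alpha_{n+4}}{(n+1)(n+2)\,\alpha_n},
\end{equation*}
so that for $n\ge 0$, using $c_0=1$ and the factorization $\alpha_k=(\lambda_1+\lambda_2)(k+\mu_2)$,
\begin{equation*}
c_{2n}=\frac{(n+1)(n+2)^2(n+3)}{12}\cdot\frac{(n+\mu_2)(n+1+\mu_2)(n+2+\mu_2)(n+3+\mu_2)}{\mu_2(1+\mu_2)(2+\mu_2)(3+\mu_2)}.
\end{equation*}
Substituting $n\mapsto n/2$, rewriting the linear factors in the form $(n+2k+2\mu_2)/2$ and absorbing the resulting power of $2$ into the $3072$ denominator yields \eqref{eq:values_c_n_even}. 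The expression \eqref{eq:values_c_n_odd} is then obtained from $c_{2n+1}=-c_{2n}(n+4)(n+2+\mu_2)/[(n+2)(n+\mu_2)]$ and the analogous re-indexing $n\mapsto(n-1)/2$; the sign flip accounts for the $(-1)^n$ pattern.

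The asymptotics \eqref{eq:asymptotics_c_n} are immediate by counting degrees: in both parities $c_n$ is a product of eight linear factors in $n$, with leading coefficient $\pm 1/[3072\mu_2(1+\mu_2)(2+\mu_2)(3+\mu_2)]$ according to parity. For the symmetric case $\lambda_1=\lambda_2$, substituting $\mu_2=1/2$ gives $\mu_2(1+\mu_2)(2+\mu_2)(3+\mu_2)=105/16$ and collapses both \eqref{eq:values_c_n_even} and \eqref{eq:values_c_n_odd} to the same expression $(-1)^n(n+1)(n+2)(n+3)(n+4)^2(n+5)(n+6)(n+7)/20160$; rewriting $(n+1)\cdots(n+7)=7!\binom{n+7}{7}$ and using $5040/20160=1/4$ yields the claimed binomial form $(-1)^n\binom{n+7}{7}(n+4)/4$.
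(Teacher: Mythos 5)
Your proof is correct and follows essentially the same approach as the paper: factor the four linear combinations appearing in the recurrence \eqref{eq:values_(c_n)}, observe that the two-step ratio $c_{2n+2}/c_{2n}$ telescopes, take the product, and reindex $n\mapsto n/2$ (resp.\ $(n-1)/2$). Your version is slightly more explicit (you spell out the odd case and the symmetric specialization, which the paper leaves to the reader), and it also quietly corrects a typo in the paper's first displayed ratio, which should read $(n+2+\mu_2)$ rather than $(n+2+3\mu_2)$.
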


\begin{figure}[hbtp]
\centering
\includegraphics[scale=0.7]{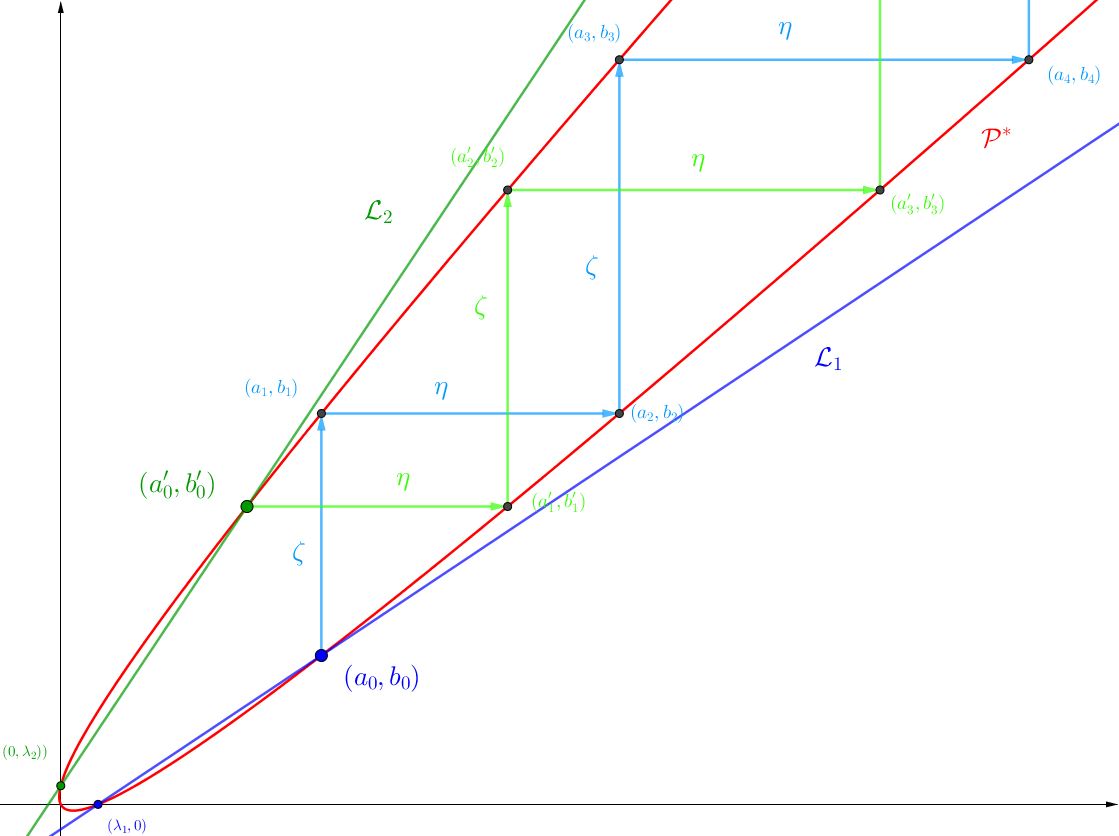}
\caption{The parabola $\mathcal{P}^*$ is represented in red, the line $\mathcal{L}_1$ in blue, $\mathcal{L}_2$ in green, the starting point $(a_0,b_0)$ in blue and $(a'_0,b'_0)$ in green. The automorphisms $\zeta$ and $\eta$ on the parabola allow us to define the sequences $(a_n,b_n)_{n\geq0}$ and $(a'_n,b'_n)_{n\geq0}$.}
\label{fig:compensation}
\end{figure}

\begin{proof}
We start by reformulating the recurrence relation \eqref{eq:values_(c_n)}. We have
\begin{equation*}
    \frac{3a_{2n}-2b_{2n}+2\lambda_2}{3a_{2n+1}-2b_{2n+1}+2\lambda_2} = \frac{(n + 4)(n + 2 + 3\mu_2)}{(n + 2)(n + \mu_2)}.
\end{equation*}
Similarly, we have
\begin{equation*}
   \frac{-2a_{2n+1}+3b_{2n+1}+2\lambda_1}{-2a_{2n+2}+3b_{2n+2}+2\lambda_1} =\frac{(n + 3)(n + 4 + \mu_2)}{(n + 1)(n + 2+ \mu)}.
\end{equation*}
We immediately obtain via \eqref{eq:values_(c_n)} that
\begin{equation}
\label{eq:rec_2_c_n}
    c_{2n+2}=c_{2n}\frac{(n + 3)(n + 4)(n  + 4  + \mu_2 )}{(n + 1)(n + 2)(n  + \mu_2 )},
\end{equation}
which shows that $c_{2n}$ admits a telescopic structure. More precisely, denoting $T_n =(n + 1)(n + 2)$ and $U_n =n  + \mu_2 $, \eqref{eq:rec_2_c_n} can be rewritten as
\begin{equation*}
   c_{2n+2}=c_{2n} \frac{T_{n+2}}{T_n}\frac{U_{n+4}}{U_n}.
\end{equation*}
We conclude that $c_{2n}=\frac{T_nT_{n+1}}{T_0T_1}\frac{U_nU_{n+1}U_{n+2}U_{n+3}}{U_0U_1U_2U_3}$. Replacing $n$ by $\frac{n}{2}$, this coincides with the value of $c_n$ announced in \eqref{eq:values_c_n_even}.
The proof of \eqref{eq:values_c_n_odd} would be similar.
\end{proof}

Symmetric formulas hold for the sequences $(a'_n)_{n\geq 0}$, $(b'_n)_{n\geq 0}$ and $(c'_n)_{n\geq 0}$ in~\eqref{eq:value_pp(u,v)}:
\begin{equation}
\label{eq:values_(ap_n,bp_n)}
   (a'_{2n},b'_{2n}) = \bigl(a'_0+2n(b'_0-a_0')+n^2\lambda_1+n(n-1)\lambda_2, b'_0+2n(b'_0-a_0')+n(n+1)\lambda_1+n^2\lambda_2\bigr),
\end{equation}
and $(a'_{2n+1},b'_{2n+1})= (a'_{2n+2},b'_{2n})$.
We finally introduce $c_0'=1$ and we have
\begin{equation}
\label{eq:values_(cp_n)}
\left\{\begin{array}{rcl}
c'_{2n+1}&=&\displaystyle -c'_{2n}\frac{3b'_{2n}-2a'_{2n}+2\lambda_1}{3b'_{2n}-2a'_{2n+1}+2\lambda_1},\medskip\\
c'_{2n+2}&=&\displaystyle -c'_{2n+1}\frac{-2b'_{2n+1}+3a'_{2n+1}+2\lambda_2}{-2b'_{2n+2}+3a'_{2n+2}+2\lambda_2}.
\end{array}\right.
\end{equation}
The sequence $(c'_n)_{n\geq 0}$ admits the same exact and asymptotic expressions as $(c_n)_{n\geq 0}$, provided $\lambda_1$ and $\lambda_2$ are interchanged.

\subsection{Computation of the convex combination}
\label{subsec:computation_combination}

\begin{proposition}
\label{prop:comp_p(u,0)}
The function $p(u,v)$ in \eqref{eq:value_p(u,v)} evaluated at $v=0$ is equal to
\begin{multline*}
    p(u,0) =-\frac{1}{3\mu_2(1+\mu_2   )(2+\mu_2 )(3+\mu_2 ) }\times\\ \sum_{n\geq 2}(n-1)n(n+1)(n - 1 + \mu)
    (n + \mu)(n + 1 + \mu)(n + \mu/2)\exp(-(n^2+\mu n)(\lambda_1+\lambda_2)u).
\end{multline*}
\end{proposition}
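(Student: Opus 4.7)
The first observation is that the indexing convention $(a_{2n+1},b_{2n+1}) = (a_{2n},b_{2n+2})$ in Proposition~\ref{prop:values_an_bn} forces $a_{2n+1}=a_{2n}$ for every $n\geq 0$. Thus setting $v=0$ in the definition~\eqref{eq:value_p(u,v)} of $p$ collapses consecutive terms pairwise,
\begin{equation*}
p(u,0)=\sum_{n\geq 0}\bigl(c_{2n}+c_{2n+1}\bigr)e^{-a_{2n}u},
\end{equation*}
and this pairing is the whole source of the simplification, since $c_{2n}$ and $c_{2n+1}$ taken individually have a fairly complicated structure, whereas their sum will factor nicely.

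The plan is then to carry out two explicit algebraic computations. First, injecting the value $(a_0,b_0)=(4\lambda_1+6\lambda_2,2\lambda_1+4\lambda_2)$ from~\eqref{eq:a0b0} into~\eqref{eq:values_(a_n,b_n)} and using $a_0-b_0=2(\lambda_1+\lambda_2)$, one finds after grouping that
\begin{equation*}
a_{2n}=(\lambda_1+\lambda_2)\bigl[(n+2)^2+\mu_2(n+2)\bigr]=(\lambda_1+\lambda_2)(n+2)(n+2+\mu_2).
\end{equation*}
After the reindexing $m=n+2$, the exponential becomes $e^{-(\lambda_1+\lambda_2)m(m+\mu_2)u}$, which is exactly the form of the exponents $(m^2+\mu_2 m)(\lambda_1+\lambda_2)$ appearing in the target sum of the proposition (and in Theorem~\ref{cor:boundary_density_non-sym}).

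Second, I would compute $c_{2n}+c_{2n+1}$ from the closed forms~\eqref{eq:values_c_n_even} and~\eqref{eq:values_c_n_odd}. After factoring out the 2's (eight of them, accounting for the ratio $256/3072=1/12$) and extracting the common factor $(n+1)(n+2)(n+3)(n+1+\mu_2)(n+2+\mu_2)(n+3+\mu_2)$, the sum reduces to the much simpler combination
\begin{equation*}
(n+2)(n+\mu_2)-(n+4)(n+2+\mu_2),
\end{equation*}
which expands and collapses to $-4\bigl(n+2+\mu_2/2\bigr)$. Multiplying back, this gives
\begin{equation*}
c_{2n}+c_{2n+1}=-\frac{(n+1)(n+2)(n+3)(n+1+\mu_2)(n+2+\mu_2)(n+3+\mu_2)(n+2+\mu_2/2)}{3\mu_2(1+\mu_2)(2+\mu_2)(3+\mu_2)}.
\end{equation*}

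Combining this identity with the expression of $a_{2n}$ and the shift $m=n+2$ yields the announced formula for $p(u,0)$. The only genuine obstacle is the algebraic simplification of $c_{2n}+c_{2n+1}$: two degree-eight polynomial expressions must collapse to a single degree-seven one, and this would fail without the specific factor $(n+4)^2$ in the even case and $(n+3+2\mu_2)^2$ in the odd case of Corollary~\ref{cor:values_polynomial_c_n}. Everything else is straightforward bookkeeping on the exponents and the prefactors.
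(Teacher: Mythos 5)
Your proof is correct and follows the same route as the paper: pair consecutive terms using $a_{2n}=a_{2n+1}$, compute $c_{2n}+c_{2n+1}$ in closed form from Corollary~\ref{cor:values_polynomial_c_n}, and rewrite $a_{2n}=(\lambda_1+\lambda_2)(n+2)(n+2+\mu_2)$ before reindexing. Incidentally, the paper's own write-up contains two apparent typos --- it writes $c_{2n}-c_{2n+1}$ where the grouping at $v=0$ clearly produces the sum $c_{2n}+c_{2n+1}$, and it repeats the factor $(n+2+\mu)$ where the third factor should be $(n+3+\mu)$ --- both of which your version silently corrects.
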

\begin{proof}
We start from the expression of $p(u,v)$ given in \eqref{eq:value_p(u,v)}. Since $a_{2n}=a_{2n+1}$ (see Proposition~\ref{prop:values_an_bn}), we can group the terms as follows:
\begin{equation*}
    p(u,0) = \sum_{n\geq 0} (c_{2n}-c_{2n+1})e^{-a_{2n}u}.
\end{equation*}
Using the expression of $c_{2n}$ and $c_{2n+1}$ given in Proposition~\ref{cor:values_polynomial_c_n} and after some simplification, we obtain
\begin{multline*}
    c_{2n}-c_{2n+1} = -\frac{1}{3\mu_2(1+\mu_2   )(2+\mu_2 )(3+\mu_2 ) }\times\\ (n+1)(n+2)(n+3)(n + 1 + \mu)
    (n +2+ \mu)(n + 2 + \mu)(n + 2+\mu/2).
\end{multline*}
Moreover, one can reformulate $a_{2n} =\bigl((n+2)^2+\mu (n+2)\bigr)(\lambda_1+\lambda_2) $. We then immediately deduce Proposition~\ref{prop:comp_p(u,0)}.
\end{proof}
Similarly, we have:
\begin{proposition}
\label{prop:comp_p'(u,0)}
The function $p'(u,v)$ in \eqref{eq:value_pp(u,v)} evaluated at $v=0$ is equal to
\begin{multline*}
    p'(u,0) = -\frac{1}{3\mu_1(1+\mu_1)(2+\mu_1)(3+\mu_1) }\times\\ \sum_{n\leq -2}(n-1)n(n+1)(n - 1 + \mu)
    (n + \mu)(n + 1 + \mu)(n + \mu/2)\exp(-(n^2+\mu n)(\lambda_1+\lambda_2)u).
\end{multline*}
\end{proposition}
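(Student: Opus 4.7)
The strategy is to mirror the argument of Proposition~\ref{prop:comp_p(u,0)} in this symmetric setting. First, by~\eqref{eq:values_(ap_n,bp_n)}, one has $(a'_{2n+1},b'_{2n+1})=(a'_{2n+2},b'_{2n})$, so the $u$-coefficients satisfy $a'_{2n+1}=a'_{2n+2}$ for all $n\geq 0$, while $a'_0$ is unmatched. Setting $v=0$ in~\eqref{eq:value_pp(u,v)} and grouping consecutive exponentials accordingly yields
\begin{equation*}
p'(u,0)\, =\, c'_0\,e^{-a'_0 u}+\sum_{n\geq 0}(c'_{2n+1}+c'_{2n+2})\,e^{-a'_{2n+2}u}.
\end{equation*}

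Next I would evaluate the combination $c'_{2n+1}+c'_{2n+2}$ in closed form. Since $(c'_n)$ is obtained from $(c_n)$ by interchanging $\lambda_1\leftrightarrow\lambda_2$, equivalently $\mu_1\leftrightarrow\mu_2$, I would substitute into~\eqref{eq:values_c_n_even}--\eqref{eq:values_c_n_odd} the parameter $\mu_1$ in place of $\mu_2$, and carry out the same algebraic cancellations as in the proof of Proposition~\ref{prop:comp_p(u,0)}; the telescoping structure analogous to~\eqref{eq:rec_2_c_n} then delivers a compact product whose global prefactor is $-1/(3\mu_1(1+\mu_1)(2+\mu_1)(3+\mu_1))$, matching the statement.

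Finally, from~\eqref{eq:values_(ap_n,bp_n)} one computes $a'_{2n+2}=(n+3)(n+2+\mu_1)(\lambda_1+\lambda_2)$ and $a'_0=2(1+\mu_1)(\lambda_1+\lambda_2)$. Using $\mu_1+\mu_2=1$, both quantities can be recast as $k(k+\mu_2)(\lambda_1+\lambda_2)$: the former with $k=-(n+3)\leq -3$, the latter with $k=-2$. Performing the change of index $k=-(n+3)$ then folds the unmatched first term and the sum into a single series indexed by $k\leq -2$ of the announced shape, after one verifies that the product $(k-1)k(k+1)(k-1+\mu)(k+\mu)(k+1+\mu)(k+\mu/2)$ is produced from the closed form for $c'_{2n+1}+c'_{2n+2}$ up to the expected overall sign.

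The only real obstacle is bookkeeping: tracking the sign changes triggered by the index reversal $k\mapsto -k$, and the asymmetric appearance of $\mu_1$ (in the prefactor, inherited from the $c'$-computation) versus $\mu_2$ (in the exponents and cubic factors, after reindexing). Alternatively, one may bypass the direct computation by invoking the built-in symmetry $p'(u,v)=\tilde p(v,u)$, where $\tilde p$ is the function $p$ constructed after swapping $\lambda_1\leftrightarrow\lambda_2$; the claim then reduces to a twin statement for $p(0,v)$, proved exactly as Proposition~\ref{prop:comp_p(u,0)} but grouping $c_{2n+1}+c_{2n+2}$ (via $b_{2n+1}=b_{2n+2}$) instead of $c_{2n}+c_{2n+1}$.
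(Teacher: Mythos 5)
Your proof is correct and follows essentially the same route as the paper, which simply states Proposition~\ref{prop:comp_p'(u,0)} after ``Similarly, we have:'' without elaboration. You correctly identify the structural asymmetry that makes this ``similarly'' nontrivial: whereas $a_{2n}=a_{2n+1}$ pairs the terms of $p(u,0)$ cleanly from $n=0$, for $p'$ the pairing is $a'_{2n+1}=a'_{2n+2}$, so the term $c'_0e^{-a'_0u}$ sits alone and must be absorbed as the $k=-2$ term of the final bi-infinite tail. Your verifications that $a'_{2n+2}=(n+3)(n+2+\mu_1)(\lambda_1+\lambda_2)=k(k+\mu_2)(\lambda_1+\lambda_2)$ with $k=-(n+3)$ and that $a'_0=2(1+\mu_1)(\lambda_1+\lambda_2)$ matches $k=-2$ are both correct, as is the observation that the prefactor inherits $\mu_1$ from the $\mu_1\leftrightarrow\mu_2$ swap in~\eqref{eq:values_c_n_even}--\eqref{eq:values_c_n_odd} while the summand retains $\mu=\mu_2$ after reindexing. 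The alternative route via the symmetry $p'(u,v)=\tilde p(v,u)$ (with $\tilde p$ built after swapping $\lambda_1\leftrightarrow\lambda_2$) is a slightly cleaner way to organise the same bookkeeping, since it reduces the claim to a mirror-image of Proposition~\ref{prop:comp_p(u,0)} evaluated along the other boundary with the grouping $c_{2n+1}+c_{2n+2}$ driven by $b_{2n+1}=b_{2n+2}$; the paper does not make this remark explicit, so this is a useful addition.
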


As a consequence of Propositions~\ref{prop:comp_p(u,0)} and \ref{prop:comp_p'(u,0)}, we obtain that 
there exists a unique choice of constants $C$ and $C'$, namely formula~\eqref{eq:choice_C_C'} of  Corollary~\ref{cor:nuintermofp}, such that the convex combination
$Cp(u,0)+C'p'(u,0)$ in \eqref{eq:convex_combin_ppp} is equal to the formula \eqref{eq:formula_boundary_density_original_question_non_symmetric} for the density function $\nu_i(u)$ given in Theorem~\ref{cor:boundary_density_non-sym}. 

We furthermore conjecture that it must be the unique choice of $C$ and $C'$ such that $Cp(u,v)+C'p'(u,v)$ is a positive function. This should follow from a result of uniqueness of positive solutions of the PDE~\eqref{eq:edp}.

\begin{corollary}[Values of the constants $C$ and $C'$]
\label{cor:nuintermofp}
Taking
\begin{equation}
\label{eq:choice_C_C'}
    C:=\frac{4(\lambda_1+2\lambda_2   )(2\lambda_1  + 3\lambda_2 )(3\lambda_1  + 4\lambda_2 ) }{\lambda_1}\medskip \, \text{ and } \, 
    C':=\frac{4(\lambda_2+2\lambda_1   )(2\lambda_2  + 3\lambda_1 )(3\lambda_2  + 4\lambda_1 ) }{\lambda_2},
\end{equation}
we have 
\begin{equation*}
\nu_2(u)=Cp(u,0)+C'p'(u,0)
\quad\text{and}\quad
\nu_1(v)=Cp(0,v)+C'p'(0,v).
\end{equation*}
\end{corollary}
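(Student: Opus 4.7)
The plan is a term-by-term series verification. Starting from the expansion
\begin{equation*}
\nu_2(u) = -\frac{4(\lambda_1+\lambda_2)^4}{3\lambda_1\lambda_2}\sum_{n\in\mathbb Z} F(n,\mu_2)\, e^{-n(n+\mu_2)(\lambda_1+\lambda_2)u}
\end{equation*}
of Theorem~\ref{cor:boundary_density_non-sym}, where $F(n,\mu) := (n-1)n(n+1)(n-1+\mu)(n+\mu)(n+1+\mu)(n+\mu/2)$, I observe that the cubic factor $(n-1)n(n+1)$ kills the three central terms $n\in\{-1,0,1\}$. The bi-infinite sum thus cleanly splits into its $n\geq 2$ and $n\leq -2$ halves, which I plan to identify with $Cp(u,0)$ and $C'p'(u,0)$ respectively, using Propositions~\ref{prop:comp_p(u,0)} and~\ref{prop:comp_p'(u,0)}.

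For the $n\geq 2$ half, the general terms of $\nu_2(u)$ and of the series for $p(u,0)$ from Proposition~\ref{prop:comp_p(u,0)} share both the exponential $e^{-n(n+\mu_2)(\lambda_1+\lambda_2)u}$ and the polynomial factor $F(n,\mu_2)$. Their ratio is therefore independent of $n$ and equals
\begin{equation*}
C = \frac{4(\lambda_1+\lambda_2)^4}{\lambda_1\lambda_2}\,\mu_2(1+\mu_2)(2+\mu_2)(3+\mu_2).
\end{equation*}
Substituting $\mu_2=\lambda_2/(\lambda_1+\lambda_2)$ and applying the identity $(k+\mu_2)(\lambda_1+\lambda_2) = k\lambda_1+(k+1)\lambda_2$ for $k=0,1,2,3$ cancels four powers of $(\lambda_1+\lambda_2)$ and one factor of $\lambda_2$, yielding exactly the expression for $C$ in~\eqref{eq:choice_C_C'}. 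The analogous coefficient comparison on the $n\leq -2$ half, via Proposition~\ref{prop:comp_p'(u,0)}, produces the claimed value of $C'$.

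For the companion identity $\nu_1(v)=Cp(0,v)+C'p'(0,v)$, I will invoke the intrinsic symmetry of the setup under the interchange $\lambda_1\leftrightarrow\lambda_2$ (equivalently $\mu_1\leftrightarrow\mu_2$). This symmetry exchanges $\nu_1\leftrightarrow\nu_2$, maps $p(u,v)$ to $p'(v,u)$ (so $p(u,0)$ becomes $p'(0,u)$), and swaps the constants $C\leftrightarrow C'$ (as is visible directly from~\eqref{eq:choice_C_C'}). Applying the symmetry to the first identity yields $\nu_1(v) = C'p'(0,v)+Cp(0,v)$, as desired.

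The main bookkeeping obstacle is the $n\leq -2$ alignment, where the prefactor $F(-k,\mu_2)$ for $k\geq 2$ must be matched with the general term appearing in Proposition~\ref{prop:comp_p'(u,0)}. The relevant tool is the septuple symmetry $F(-n-\mu,\mu)=-F(n,\mu)$, an elementary direct verification whose sign propagates correctly to deliver the stated value of $C'$. Once this alignment is in place, the remaining steps reduce to elementary polynomial algebra.
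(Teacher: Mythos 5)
Your proof is correct and takes essentially the same route as the paper's one-line proof, which simply points to Propositions~\ref{prop:comp_p(u,0)}, \ref{prop:comp_p'(u,0)} and Theorem~\ref{cor:boundary_density_non-sym}; you have the merit of spelling out the coefficient comparison and supplying the $\lambda_1\leftrightarrow\lambda_2$ symmetry argument for the second identity, which the paper leaves implicit. One small remark: the ``bookkeeping obstacle'' you anticipate for the $n\leq -2$ half, and the septuple symmetry $F(-n-\mu,\mu)=-F(n,\mu)$ you invoke to handle it, are not actually needed here. Proposition~\ref{prop:comp_p'(u,0)} already writes $p'(u,0)$ as a sum over $n\leq-2$ with \emph{exactly} the same summand $F(n,\mu_2)\,e^{-n(n+\mu_2)(\lambda_1+\lambda_2)u}$ that appears in the bi-infinite sum for $\nu_2$; the only difference from Proposition~\ref{prop:comp_p(u,0)} is that its overall constant carries $\mu_1$ in place of $\mu_2$. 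So the comparison for $C'$ is as direct as the one for $C$, with no reindexing required. (The symmetry $F(-n-\mu,\mu)=-F(n,\mu)$ is indeed true and is the structural reason Proposition~\ref{prop:comp_p'(u,0)} takes that form, but taking that proposition as given, you may cite it directly.)
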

\begin{proof}
    This follows directly from the Propositions~\ref{prop:comp_p(u,0)} and \ref{prop:comp_p'(u,0)} and Theorem~\ref{cor:boundary_density_non-sym}.
\end{proof}
It is interesting to note that by construction, remembering~\eqref{eq:asymptotics_c_n}, $C$ and $C'$ are such that 
\begin{equation*}
   c_n/c'_n \underset{n\to\infty}{\longrightarrow} C'/C .\end{equation*}

\begin{statement}[Statement equivalent to Theorem~\ref{thm:maindensity}]
\label{statement}
The bivariate density $\pi(u,v)$ is given by $Cp(u,v)+C'p'(u,v)$, with $C$ and $C'$ as in \eqref{eq:choice_C_C'}.
\end{statement}

\begin{proof}[Proof of Statement~\ref{statement} and Theorem~\ref{thm:maindensity}]
    Let $f(u,v)=Cp(u,v)+C'p'(u,v)$, which satisfies the PDE~\eqref{eq:edp} by construction of $p$ and $p'$ with the principle of the compensation approach. We also define $f_1(v)=f(0,v)$ and $f_2(u)=f(u,0)$. By simple integration by parts, the PDE implies that the Laplace transforms $\hatf$, $\hatf_1$ and $\hatf_2$ satisfy the same functional equation~\eqref{eq:funceq} satisfied by $\hatpi$, i.e.
    \begin{equation*}
       \left[(x-y)^2+2(\delta_2-\delta_1)x+2(\delta_3-\delta_2)y \right]\hatf (x,y)
=\left(x-\frac{y}{2}\right)\hatf_1(y)+\left(y-\frac{x}{2}\right)\hatf_2(x) .
\end{equation*}
Corollary~\ref{cor:nuintermofp} implies that $\hatnu_1(y)=\hatf_1(y)$ and $\hatnu_2(x)=\hatf_2(x)$. Therefore, the functional equations satisfied by $\hatpi$ and $\hatf$ imply that $\hatpi (x,y)=\hatf (x,y)$ and we conclude that the density $\pi(u,v)=f(u,v)$. See \cite[Thm 5.1]{doetsch_introduction_1974} for the classical result on the injectivity of the Laplace transform.
\end{proof}

\appendix

\section{Appendix: Homogeneity relations}

\subsection{Homogeneity relations in the general case}

\begin{lemma}[Homogeneity relations, general case]
\label{lem:homogeneitygeneral}
Denote by $\hatpi( x, y;\lambda_1,\lambda_2)$, $\hatnu_1(y;\lambda_1,\lambda_2)$ and $\hatnu_2(x;\lambda_1,\lambda_2)$ the Laplace transforms associated to the parameters $\lambda_1$ and $\lambda_2$. Let us recall that 
\begin{equation*}
   \mu_1=\frac{\lambda_1}{\lambda_1+\lambda_2}
\quad\text{and}\quad
\mu_2=\frac{\lambda_2}{\lambda_1+\lambda_2}.
\end{equation*}
We have the homogeneity relations
\begin{equation*}
   \left\{ \begin{array}{rcl}
   \hatpi((\lambda_1+\lambda_2) x,(\lambda_1+\lambda_2) y;\lambda_1,\lambda_2) & = & \hatpi(x,y;\mu_1,\mu_2),\\
   \hatnu_1((\lambda_1+\lambda_2) y;\lambda_1,\lambda_2) & = &  (\lambda_1+\lambda_2)\hatnu_1(y;\mu_1,\mu_2),\\
   \hatnu_2((\lambda_1+\lambda_2) x;\lambda_1,\lambda_2) & = &  (\lambda_1+\lambda_2)\hatnu_2(x;\mu_1,\mu_2).
   \end{array}\right.
\end{equation*}
At the level of densities, it reads
\begin{equation*}
   \left\{ \begin{array}{rcl}
   \pi( x, y;\lambda_1,\lambda_2) & = &(\lambda_1+\lambda_2)^2 \pi((\lambda_1+\lambda_2) x,(\lambda_1+\lambda_2) y;\mu_1,\mu_2),\\
   \nu_1( y;\lambda_1,\lambda_2) & = & (\lambda_1+\lambda_2)^2 \nu_1((\lambda_1+\lambda_2) y;\mu_1,\mu_2),\\
   \nu_2( x;\lambda_1,\lambda_2) & = & (\lambda_1+\lambda_2)^2  \nu_2((\lambda_1+\lambda_2) x;\mu_1,\mu_2).
   \end{array}\right.
\end{equation*}
\end{lemma}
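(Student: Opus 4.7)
My plan is to derive the homogeneity relations directly from the functional equation~\eqref{eq:funceq}, which we now write with the parameter dependence made explicit:
\begin{equation*}
   \bigl[(x-y)^2+\lambda_1 x+\lambda_2 y\bigr]\hatpi(x,y;\lambda_1,\lambda_2)
   =\Bigl(x-\tfrac{y}{2}\Bigr)\hatnu_1(y;\lambda_1,\lambda_2)+\Bigl(y-\tfrac{x}{2}\Bigr)\hatnu_2(x;\lambda_1,\lambda_2).
\end{equation*}
Setting $c:=\lambda_1+\lambda_2$, the first step is to evaluate this identity at $(cx,cy)$ in place of $(x,y)$. The quadratic part $(cx-cy)^2$ produces a factor $c^2$, while the linear part $\lambda_1(cx)+\lambda_2(cy)$ equals $c^2(\mu_1 x+\mu_2 y)$ by the definition of $\mu_1,\mu_2$ in~\eqref{eq:def_mu}. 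The right-hand side prefactors $(cx-cy/2)$ and $(cy-cx/2)$ each produce a single factor of $c$. Dividing through by $c^2$, I obtain
\begin{equation*}
   \bigl[(x-y)^2+\mu_1 x+\mu_2 y\bigr]\hatpi(cx,cy;\lambda_1,\lambda_2)
   =\Bigl(x-\tfrac{y}{2}\Bigr)\frac{\hatnu_1(cy;\lambda_1,\lambda_2)}{c}+\Bigl(y-\tfrac{x}{2}\Bigr)\frac{\hatnu_2(cx;\lambda_1,\lambda_2)}{c}.
\end{equation*}
This is precisely the BAR at parameters $(\mu_1,\mu_2)$, satisfied by the triple $\bigl(\hatpi(c\cdot,c\cdot;\lambda_1,\lambda_2),c^{-1}\hatnu_1(c\cdot;\lambda_1,\lambda_2),c^{-1}\hatnu_2(c\cdot;\lambda_1,\lambda_2)\bigr)$.

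The second step is to invoke uniqueness. As recalled after~\eqref{eq:def:laplnu2}, a probability measure on $(0,\infty)^2$ satisfying the BAR together with two finite lateral measures is necessarily the invariant distribution of the gap process. Since the evaluation at $(x,y)=(0,0)$ gives $\hatpi(0,0;\lambda_1,\lambda_2)=1$, the rescaled triple above defines a probability measure and its associated lateral measures; ergodicity holds at $(\mu_1,\mu_2)$ because $\mu_1,\mu_2>0$ trivially satisfy the Hobson conditions \eqref{eq:recurrence_condition}. By uniqueness, this triple must coincide with $\bigl(\hatpi(\cdot,\cdot;\mu_1,\mu_2),\hatnu_1(\cdot;\mu_1,\mu_2),\hatnu_2(\cdot;\mu_1,\mu_2)\bigr)$, which yields the three Laplace-transform homogeneity relations.

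The final step is to pass from Laplace transforms to densities via standard scaling. If $\hat g(x,y)=\hat f(cx,cy)$ for some bivariate densities $f,g$, then the change of variables $(s,t)=(cu,cv)$ in the defining integral shows $g(u,v)=c^{-2}f(u/c,v/c)$, i.e.\ $f(u,v)=c^2 g(cu,cv)$. Applied to our Laplace relations, this immediately yields
\begin{equation*}
   \pi(u,v;\lambda_1,\lambda_2)=c^2\pi(cu,cv;\mu_1,\mu_2),
\end{equation*}
and analogously for $\nu_1,\nu_2$ (where the univariate scaling produces a factor $c$, combined with the extra $c$ from $\hatnu_i(cy;\lambda_1,\lambda_2)=c\hatnu_i(y;\mu_1,\mu_2)$, giving the $c^2$ prefactor in the density relation). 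I do not anticipate any serious obstacle: the proof is entirely bookkeeping, and the only substantive input is the uniqueness of the BAR solution, which has already been recorded in the introduction.
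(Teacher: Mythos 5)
Your proof is correct and follows essentially the same route as the paper's: substitute $(cx,cy)$ with $c=\lambda_1+\lambda_2$ into the BAR, divide by $c^2$ to recognize the BAR at parameters $(\mu_1,\mu_2)$, and invoke the uniqueness of probability solutions to the BAR together with the normalization $\hatpi(0,0)=1$. The transfer to densities by scaling is also exactly what the paper does (implicitly, as a ``follows directly'' remark).
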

\begin{proof}
    An immediate computation starting from the functional equation yields 
\begin{multline}
\label{eq:comparing_1/2general}
\bigl((x-y)^2+\frac{\lambda_1}{\lambda_1+\lambda_2} x+\frac{\lambda_2}{\lambda_1+\lambda_2} y\bigr)\hatpi((\lambda_1+\lambda_2)x,(\lambda_1+\lambda_2) y;\lambda_1,\lambda_2)
\\
=
\left(x-\frac{y}{2}\right) \frac{\hatnu_1((\lambda_1+\lambda_2) y;\lambda_1,\lambda_2)}{(\lambda_1+\lambda_2)}+\left(y-\frac{x}{2}\right) \frac{\hatnu_2((\lambda_1+\lambda_2) x;\lambda_1,\lambda_2)}{(\lambda_1+\lambda_2)}.
\end{multline}
On the other hand,
\begin{equation}
\label{eq:comparing_2/2general}
\bigl((x-y)^2+\mu_1 x+\mu_2 y\bigr)\hatpi(x,y;\mu_1,\mu_2)
=
\left(x-\frac{y}{2}\right)\hatnu_1(y;\mu_1,\mu_2)+\left(y-\frac{x}{2}\right)\hatnu_2(x;\mu_1,\mu_2).
\end{equation}
Comparing \eqref{eq:comparing_1/2general} and \eqref{eq:comparing_2/2general}, noting that $\hatpi(0,0;\lambda_1,\lambda_2) = \hatpi(0,0;\mu_1,\mu_2)=1$ and recalling the uniqueness property for  the main functional equation~\eqref{eq:funceq} corresponding to a probability measure, stated at the end of Section~\ref{sec:1.1}, we deduce the first statement of the lemma concerning the Laplace transforms. The relations at the level of densities follow directly.
\end{proof}

\subsection{Homogeneity relations in the symmetric case}

In the symmetric case $\lambda_1=\lambda_2=\lambda$, we explain how to reduce to the case $\lambda=1$. This may help reduce to the number of parameters.
\begin{lemma}[Homogeneity relations, symmetric case]
\label{lem:homogeneity}
In the symmetric case, denote by $\hatpi( x, y;\lambda)$, $\hatnu_1(y;\lambda)$ and $\hatnu_2(x;\lambda)$ the Laplace transforms associated to the parameter $\lambda$. We have the homogeneity relations
\begin{equation}
\label{eq:homogeneity_Laplace}
   \left\{ \begin{array}{rcl}
   \hatpi(\lambda x,\lambda y;\lambda) & = & \hatpi(x,y;1),\\
   \hatnu_1(\lambda y;\lambda) & = & \lambda \hatnu_1(y;1),\\
   \hatnu_2(\lambda x;\lambda) & = & \lambda \hatnu_2(x;1).
   \end{array}\right.
\end{equation}
At the level of densities, it reads
\begin{equation*}
   \left\{ \begin{array}{rcl}
   \pi( x, y;\lambda) & = &\lambda^2 \pi(\lambda x,\lambda y;1),\\
   \nu_1( y;\lambda) & = & \lambda^2 \nu_1(\lambda y;1),\\
   \nu_2( x;\lambda) & = & \lambda^2  \nu_2(\lambda x;1).
   \end{array}\right.
\end{equation*}
\end{lemma}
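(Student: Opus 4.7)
My plan is to mirror directly the argument used to prove Lemma~\ref{lem:homogeneitygeneral} in the preceding subsection: substitute rescaled variables into the BAR \eqref{eq:funceq}, factor out the appropriate power of $\lambda$ on both sides, and appeal to the uniqueness statement for the invariant measure (stated at the end of Section~\ref{sec:1.1}) to identify the rescaled equation with the normalized BAR at $\lambda=1$. Here the scaling is simply by $\lambda$ rather than by $\lambda_1+\lambda_2$ as in the general case, so the result is not a direct specialization of Lemma~\ref{lem:homogeneitygeneral} but rather an analogue of its proof.

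Concretely, I would first specialize the functional equation \eqref{eq:funceq} to $\lambda_1=\lambda_2=\lambda$, then substitute $(x,y)\mapsto(\lambda x,\lambda y)$. The left-hand side acquires a factor of $\lambda^2$, since $(\lambda x-\lambda y)^2+\lambda\cdot\lambda x+\lambda\cdot\lambda y=\lambda^2[(x-y)^2+x+y]$, while the right-hand side acquires only a factor of $\lambda$, from the prefactors $\lambda x-\lambda y/2$ and $\lambda y-\lambda x/2$. Dividing through by $\lambda^2$ yields
\begin{equation*}
[(x-y)^2+x+y]\,\hatpi(\lambda x,\lambda y;\lambda) = \Bigl(x-\frac{y}{2}\Bigr)\frac{\hatnu_1(\lambda y;\lambda)}{\lambda} + \Bigl(y-\frac{x}{2}\Bigr)\frac{\hatnu_2(\lambda x;\lambda)}{\lambda},
\end{equation*}
which is exactly the BAR \eqref{eq:funceq} with $\lambda=1$, where $\hatpi(\lambda\cdot,\lambda\cdot;\lambda)$, $\lambda^{-1}\hatnu_1(\lambda\cdot;\lambda)$, $\lambda^{-1}\hatnu_2(\lambda\cdot;\lambda)$ play the roles of $\hatpi(\cdot,\cdot;1)$, $\hatnu_1(\cdot;1)$, $\hatnu_2(\cdot;1)$ respectively. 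Since $\hatpi(0,0;\lambda)=1=\hatpi(0,0;1)$ (both are Laplace transforms of probability measures), the uniqueness property of the BAR solution recalled at the end of Section~\ref{sec:1.1} forces the three identities in \eqref{eq:homogeneity_Laplace}.

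For the density-level relations, I would invert the Laplace transforms via the change of variables $u'=\lambda u$, $v'=\lambda v$:
\begin{equation*}
\hatpi(\lambda x,\lambda y;\lambda) = \iint e^{-\lambda xu-\lambda yv}\pi(u,v;\lambda)\,\mathrm{d}u\,\mathrm{d}v = \frac{1}{\lambda^2}\iint e^{-xu'-yv'}\pi(u'/\lambda,v'/\lambda;\lambda)\,\mathrm{d}u'\,\mathrm{d}v',
\end{equation*}
and compare with the Laplace representation of $\pi(\cdot,\cdot;1)$; injectivity of the Laplace transform then gives $\pi(u,v;\lambda)=\lambda^2\pi(\lambda u,\lambda v;1)$, and analogous one-dimensional computations yield the corresponding relations for $\nu_1$ and $\nu_2$. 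No serious obstacle is expected: the argument reduces to a standard rescaling combined with the uniqueness of the BAR solution, both available directly from earlier material.
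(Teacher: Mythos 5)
Your proof is correct and follows essentially the same route as the paper's second proof of this lemma: substitute $(\lambda x,\lambda y)$ into the symmetric BAR, divide by $\lambda^2$, compare with the BAR at parameter $1$, and invoke the uniqueness property of the BAR triple together with the normalization $\hatpi(0,0;\lambda)=\hatpi(0,0;1)=1$. The density-level identities then follow by Laplace inversion, exactly as you indicate.
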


\begin{proof}
First of all, the homogeneity relations on the densities are immediate consequences of the identities~\eqref{eq:homogeneity_Laplace} on the Laplace transforms, on which we therefore focus. A first direct proof of~\eqref{eq:homogeneity_Laplace} is obtained using the explicit formulas given in Corollary~\ref{thm:explicit_symmetric} and the main functional equation~\eqref{eq:funceq}, which in the symmetric case reads
\begin{equation*}
\bigl((x-y)^2+\lambda x+\lambda y\bigr)\hatpi(x,y;\lambda)
=
\left(x-\frac{y}{2}\right)\hatnu_1(y;\lambda)+\left(y-\frac{x}{2}\right)\hatnu_2(x;\lambda),
\end{equation*}
where we added $\lambda$ in our notation to emphasize the dependence on this parameter. 

We may now give a second approach for proving~\eqref{eq:homogeneity_Laplace}. 
An immediate computation starting from the above functional equation yields 
\begin{equation}
\label{eq:comparing_1/2}
\bigl((x-y)^2+ x+ y\bigr)\hatpi(\lambda x,\lambda y;\lambda)
=
\left(x-\frac{y}{2}\right) \frac{\hatnu_1(\lambda y;\lambda)}{\lambda}+\left(y-\frac{x}{2}\right) \frac{\hatnu_2(\lambda x;\lambda)}{\lambda}.
\end{equation}
On the other hand,
\begin{equation}
\label{eq:comparing_2/2}
\bigl((x-y)^2+ x+ y\bigr)\hatpi(x,y;1)
=
\left(x-\frac{y}{2}\right)\hatnu_1(y;1)+\left(y-\frac{x}{2}\right)\hatnu_2(x;1).
\end{equation}
Comparing \eqref{eq:comparing_1/2} and \eqref{eq:comparing_2/2}, and recalling the uniqueness property for  the main functional equation~\eqref{eq:funceq}, stated at the end of Section~\ref{sec:1.1}, 
we deduce that there exists a constant $\alpha>0$ such that
\begin{equation}
\label{eq:value_alpha}
   \left\{ \begin{array}{rcl}
   \hatpi(\lambda x,\lambda y;\lambda) & = &\alpha \hatpi(x,y;1),\\
   \hatnu_1(\lambda y;\lambda) & = &\alpha \lambda \hatnu_1(y;1),\\
   \hatnu_2(\lambda x;\lambda) & = &\alpha \lambda \hatnu_2(x;1).
   \end{array}\right.
\end{equation}
Evaluating \eqref{eq:value_alpha} at $x=y=0$ and using the normalization $\hatpi(0,0;\lambda) = \hatpi(0,0;1)=1$, one finds that $\alpha$ should be equal to $1$.
\end{proof}

\section{Some remarks on the function \texorpdfstring{$\theta_\mu$}{theta\_mu} of \texorpdfstring{\eqref{eq:def_theta_mu}}{(16)}}
\label{sec:app_theta_mu}

\subsection{A probabilistic interpretation of the function \texorpdfstring{$\theta_\mu$}{theta\_mu}}

Not surprisingly, the Jacobi theta-like function $\theta_\mu$ in \eqref{eq:def_theta_mu} admits a direct probabilistic interpretation (see \eqref{eq:probab_interpretation_theta_mu} below) in terms of Brownian motion conditioned to stay forever in the interval $[0,1]$. More specifically, for $t>0$ and $x,y\in(0,1)$, let $q_t(x,y)$ be the associated transition probability density. Using the recent results by Bougerol and Defosseux \cite[Eq.~(2.1)]{BoDe-22}, one has
\begin{equation*}
    q_t(x,y) = \frac{\sin(\pi y)}{\sin(\pi x)}e^{\pi^2 t/2}p_t(x,y),
\end{equation*}
where $p_t(x,y)$ is the transition probability density function of the killed Brownian motion in $[0,1]$, namely, 
\begin{equation}
\label{eq:killed_BM_[0,1]}
    p_t(x,y) = \frac{1}{2\sqrt{2\pi t}} \sum_{n\in\mathbb Z} \left(\exp\Bigl(-\frac{(x-y+2n)^2}{2t}\Bigr)-\exp\Bigl(-\frac{(x+y-2+2n)^2}{2t}\Bigr)\right),
\end{equation}
see Section~6 in Appendix~A.1 of \cite{Borodin-Salminen}.
As explained in \cite[Sec.~2.1]{BoDe-22}, it is actually possible to start the process at $x=0$ (using the idea of entrance density measure), and obtain the density function 
\begin{equation}
\label{eq:Poisson_q_t}
    q_t(0,y) = \lim_{x\to 0}q_t(x,y) = \sin(\pi y)\sum_{n\in\mathbb Z} n\sin(n\pi y)\exp\Bigl(-\pi^2 (n^2-1)\frac{t}{2}\Bigr),
\end{equation}
see \cite[Eq.~(2.5)]{BoDe-22}. The Jacobi transformation of our Lemma~\ref{lem:Jacobi_transform} leads directly to 
\begin{equation}
\label{eq:probab_interpretation_theta_mu}
    \theta_\mu(e^{-2/t}) = \frac{1}{\sin(\pi\mu)}\left(\frac{\pi t}{2}\right)^{3/2}\exp\left(\frac{\mu^2}{2t}-\frac{\pi^2 t}{2}\right)q_t(0,\mu).
\end{equation}
As a conclusion, up to a simple prefactor function, the theta function $\theta_\mu$ exactly describes the entrance density measure of the killed Brownian motion in $[0,1]$ starting from $0$.

The paper \cite{BoDe-22} by Bougerol and Defosseux contains a further interpretation of $q_t(0,\mu)$ (and thus of $\theta_\mu$ via \eqref{eq:probab_interpretation_theta_mu})\ as a space-time non-negative harmonic function for a killed Brownian motion in a certain affine cone. We shall not elaborate on this connection here, except to say that it is natural to expect a strong link between our model and space-time Brownian motion, as suggested by our Equation~\eqref{eq:gapSRBM}, the starting point of this entire investigation. 

\subsection{Connection with the Ramanujan theta function}

The Ramanujan theta function is classically defined for $a,b\in\mathbb C$ such that $\vert ab\vert<1$ by 
\begin{equation*}
    f(a,b) = \sum_{n\in\mathbb Z} a^{\frac{n(n+1)}{2}}b^{\frac{n(n-1)}{2}}.
\end{equation*}
If we introduce
\begin{equation*}
   g(a,b)  = \sum_{n\in\mathbb Z}  na^{\frac{n(n+1)}{2}}b^{\frac{n(n-1)}{2}}= \Bigl(a\frac{\partial}{\partial a}-b\frac{\partial }{\partial b}\Bigr)f(a,b), 
\end{equation*}
then the function of \eqref{eq:def_theta_mu} can be expressed as 
\begin{equation*}
    \theta_\mu(q) = g\bigl(q^{1+\mu},q^{1-\mu}\bigr)+\frac{\mu}{2}f\bigl(q^{1+\mu},q^{1-\mu}\bigr).
\end{equation*}
This connection is not central for our purpose, but is nevertheless interesting to observe.  

\subsection*{Acknowledgments} 
This project has received funding from the 
European Research Council (ERC)\ under the European Union's Horizon 2020 research and innovation programme under the Grant Agreement No.~759702, from the ANR RESYST (ANR-22-CE40-0002), from the National Science Foundation under Grant DMS-20-04997 and DMS-20-08427, and from Centre Henri Lebesgue, programme ANR-11-LABX-0020-0. SF and KR would like to thank Manon Defosseux, Andrew Elvey Price and Timothy Huber for interesting discussions related to theta-functions.

\bibliographystyle{apalike}

\end{document}